\DeclareMathAlphabet{\mathpzc}{OT1}{pzc}{m}{en}
\newcommand{\dashint}{\,\ThisStyle{\ensurestackMath{%
			\stackinset{c}{.2\LMpt}{c}{.5\LMpt}{\SavedStyle-}{\SavedStyle\phantom{\int}}}%
		\setbox0=\hbox{$\SavedStyle\int\,$}\kern-\wd0}\int}
\DeclareMathOperator{\card}{Card}
\DeclareMathOperator{\pr}{pr}
\DeclareMathOperator{\tr}{Tr}
\DeclareMathOperator{\Aut}{Aut}
\newcommand{\Aff}{\mathrm{Aff}}
\newcommand{\Hol}{\mathrm{Hol}}
\newcommand{\ee}{\mathrm{e}}
\newcommand{\vect}[1]{\mathbf{{#1}}}
\newcommand{\dd}{\mathrm{d}}
\DeclarePairedDelimiter{\abs}{\lvert}{\rvert}
\DeclarePairedDelimiter{\norm}{\lVert}{\rVert}
\let\originalleft\left
\let\originalright\right
\renewcommand{\left}{\mathopen{}\mathclose\bgroup\originalleft}
\renewcommand{\right}{\aftergroup\egroup\originalright}
\newcommand{\Ms}{\mathscr{M}}
\newcommand{\N}{\mathds{N}}
\newcommand{\Z}{\mathds{Z}}
\newcommand{\C}{\mathds{C}}
\newcommand{\Hd}{\mathds{H}}
\newcommand{\Od}{\mathds{O}}
\newcommand{\R}{\mathds{R}}
\newcommand{\F}{\mathds{F}}
\newcommand{\T}{\mathds{T}}
\newcommand{\epsb}{\mathbold{\eps}}
\newcommand{\gf}{\mathfrak{g}}
\newcommand{\Hs}{\mathscr{H}}
\newcommand{\Us}{\mathscr{U}}
\newcommand{\Ac}{\mathcal{A}}
\newcommand{\Bc}{\mathcal{B}}
\newcommand{\Cc}{\mathcal{C}}
\newcommand{\Dc}{\mathcal{D}}
\newcommand{\Fc}{\mathcal{F}}
\newcommand{\Gc}{\mathcal{G}}
\newcommand{\Hc}{\mathcal{H}}
\newcommand{\Ic}{\mathcal{I}}
\newcommand{\Kc}{\mathcal{K}}
\newcommand{\Lc}{\mathcal{L}}
\newcommand{\cM}{\mathcal{M}}
\newcommand{\Nc}{\mathcal{N}}
\newcommand{\Pc}{\mathcal{P}}
\newcommand{\Qc}{\mathcal{Q}}
\newcommand{\Rc}{\mathcal{R}}
\newcommand{\Uc}{\mathcal{U}}
\newcommand{\Vc}{\mathcal{V}}
\newcommand{\Wc}{\mathcal{W}}
\renewcommand{\Im}{\mathrm{Im}\,}
\renewcommand{\Re}{\mathrm{Re}\,}
\newcommand{\meg}{\leqslant}
\newcommand{\Meg}{\geqslant}
\newcommand{\eps}{\varepsilon}
\renewcommand{\phi}{\varphi}
\newcommand{\mi}{\mu}
\newcommand{\Lin}{\mathscr{L}}
\title[Invariant Spaces of Holomorphic Functions]{Invariant Spaces of Holomorphic Functions on Symmetric Siegel domains}
\date{}
\begin{document}

\theoremstyle{definition}
\newtheorem{deff}{Definition}[section]

\newtheorem{oss}[deff]{Remark}

\newtheorem{ex}[deff]{Example}

\newtheorem{nott}[deff]{Notation}

\theoremstyle{plain}
\newtheorem{teo}[deff]{Theorem}

\newtheorem{lem}[deff]{Lemma}

\newtheorem{prop}[deff]{Proposition}

\newtheorem{cor}[deff]{Corollary}

\author[M. Calzi]{Mattia Calzi}

\address{Dipartimento di Matematica, Universit\`a degli Studi di
	Milano, Via C. Saldini 50, 20133 Milano, Italy}
\email{{\tt mattia.calzi@unimi.it}}

\keywords{Dirichlet space, symmetric Siegel domains, Wallach set, invariant spaces.}
\thanks{{\em Math Subject Classification 2020} 46E15, 47B33, 32M15 }
\thanks{The author is a member of the 	Gruppo Nazionale per l'Analisi Matematica, la Probabilit\`a e le	loro Applicazioni (GNAMPA) of the Istituto Nazionale di Alta	Matematica (INdAM).   The author was partially funded by the INdAM-GNAMPA Project CUP\_E55F22000270001. 
} 

\begin{abstract}
	In this paper we consider a symmetric Siegel domain $D$ and some natural representations of the M\"obius group $G$ of its biholomorphisms and of the group $\Aff$ of its affine biholomorphisms. We provide a  classification of  the affinely-invariant semi-Hilbert spaces (satisfying some natural additional assumptions) on tube domains,  and improve the classification of M\"obius-invariant  semi-Hilbert spaces on general domains.
\end{abstract}
\maketitle

\section{Introduction}

In~\cite{ArazyFisher}, Arazy and Fisher showed that the classical Dirichlet space on the unit disc $D$ in $\C$, namely
\[
\Dc=\Set{f\in \Hol(D)\colon \int_{D} \abs{f'(z)}^2\,\dd z<\infty},
\]
where $\Hol(D)$ denotes the space of holomorphic functions in $D$,
is the unique M\"obius-invariant semi-Hilbert space of holomorphic functions on $D$ which embeds continuously into the Bloch space, namely
\[
\Bc\coloneqq \Set{f\in \Hol(D)\colon \sup_{z\in D}(1-\abs{z}^2) \abs{f'(z)}<\infty },
\]
whose seminorm vanishes on constant functions, and for which the action of the M\"obius group (by composition) is continuous and bounded. 
This result was partially motivated by an earlier result by Rubel and Timoney~\cite{RubelTimoney}, which characterized the Bloch space  $\Bc$ as the largest `decent' M\"obius-invariant space of holomorphic functions on $D$. Here, we say that a semi-Banach  space $X$ of holomorphic functions on $D$ is decent if there is a  continuous linear functional $L$ on $\Hol(D)$ which induces a non-zero continuous linear functional on $X$. More precisely,  if $X$ is a decent space of holomorphic functions in which  composition with the elements of the (M\"obius) group of biholomorphisms of $D$, namely
\[
G=\Set{ z\mapsto \alpha\frac{z-b}{1-\overline b z}\colon \alpha\in \T, \abs{b}<1 },
\]
induce a bounded representation of $G$, then $X\subseteq \Bc$ continuously. 

The characterization of the Dirchlet space by M\"obius invariance was later extended to the Dirichlet space on the unit ball $D$ in $\C^n$, for isometric invariance, by Peetre in an unpublished note~\cite{Peetre}, and then Zhu in~\cite{Zhu}. See also~\cite{Peloso,Arazy3} for other descriptions of this space, and~\cite[Theorem 5]{ArazyFisher2} for the proof of uniqueness under the assumption of `bounded' invariance (that is, under the assumption that the group of biholomorphisms of $D$ acts boundedly by composition). 

This kind of results have also been considered in more general contexts, such as that of (irreducible) bounded symmetric domains. We recall that  a bounded connected open subset $D$ of $\C^n$ is said to be a symmetric domain if for every $z\in D$ there is a holomorphic involution of $D$ having $z$ as its unique (or, equivalently, as an isolated) fixed point. The domain $D$ is then homogeneous. Namely, the `M\"obius' group, that is, the group of its biholomorphisms, acts transitively on $D$. 
The domain $D$ is said to be irreducible if it is not biholomorphic to a product of two non-trivial symmetric domains. 

To begin with, the maximality property of the Bloch space was  extended to general bounded symmetric domains  in~\cite{Timoney2}, using Timoney's generalization of the Bloch space, cf.~\cite{Timoney1}. Unfortunately, the main results of~\cite{Timoney2} are  incorrect (cf., also,~\cite{Agranovski,Survey}), since they imply (cf.~\cite[Corollary 0.2]{Timoney2}) that the only closed subspaces of $\Hol(D)$ which are invariant under composition with the biholomorphisms of $D$, where $D$ is an irreducible bounded symmetric domain, are $\Set{0}$, $\C \chi_D$, and $\Hol(D)$. As~\cite[Proposition 4.12 and the following remarks]{Arazy} show, this is not always the case. In fact, there are (irreducible bounded symmetric) domains on which Timoney's Bloch space embeds continuously in a strictly larger `decent' semi-Banach space (cf., e.g.,~\cite[Theorem 1.3]{Garrigos}).

Returning to the hilbertian setting, also more general M\"obius-invariant spaces on an irreducible symmetric domain $D$ were investigated. Let $\widetilde G$ be the universal covering of the component of the identity $G_0$ of the group $G$ of  biholomorphisms of $D$, and consider the representation $\widetilde U_\lambda$ of $\widetilde G$ in $\Hol(D)$ defined, for every $\lambda\in \R$, by
\[
\widetilde U_\lambda(\phi) f \coloneqq (f\circ \phi^{-1}) (J\phi^{-1})^{\lambda/p},
\]
for every $\phi \in \widetilde G$ and for every $f\in \Hol(D)$, where $ \widetilde G$ acts on $D$ through the canonical projection $\widetilde G\to G_0$, $p$ is the genus of $D$, $J\phi=\det_\C\phi'$ is the (complex) Jacobian of $\phi$ (considered as a biholomorphism of $D$), and $(J\phi)^{-\lambda/p}=\ee^{-(\lambda/p) \log J( \phi,\,\cdot\,)}$, where $\log J$ is the unique continuous function on $\widetilde G\times D $ satisfying $\log J(e, 0)=0$ and $\ee^{\log J(\phi,z)}=(J \phi)(z)$.\footnote{Here, we assume that $D$ is in its cirular convex realization, so that $0\in D$. Observe that $\log J$ is well defined since $\widetilde G\times D$ is simply connected.}
Then, it is clear that the unweighted Bergman space 
\[
A^2(D)\coloneqq\Hol(D)\cap L^2(D)
\]
is $\widetilde U_p$-invariant with its norm. Since it embeds continuously into $\Hol(D)$, it is a reproducing kernel Hilbert space. Denote by $\Kc$ its reproducing kernel, so that $\Kc(\,\cdot\,,z)\in A^2(D)$ and
\[
f(z)=\langle f\vert \Kc(\,\cdot\,,z)\rangle_{A^2(D)}
\]
for every $f\in A^2(D)$ and for every $z\in D$. As~\cite{VergneRossi} shows, $\Kc^{\lambda/p}$ is the reproducing kernel of  a  (necessarily $\widetilde U_\lambda$-invariant with its norm) reproducing kernel Hilbert space  if and only if $\lambda$ belongs to the so-called Wallach set, which is $\Set{j a/2\colon j=0,\dots, r-1}\cup (a(r-1)/2, +\infty)$ for  suitable   $a,r\in\N$ (cf.~Definition~\ref{def:4}). In particular, $r$ denotes the rank of $D$. In the same paper, a description of the aforementioned spaces was provided on the (unbounded) realization of $D$ as a Siegel domain.
The preceding spaces were proved to be the unique reproducing kernel Hilbert spaces of holomorphic functions on $D$ on which $\widetilde U_\lambda$ induces a bounded representation (satisfying some  continuity assumptions) in~\cite{ArazyFisher3} when $D$ is the unit disc in $\C$ and the action is isometric, and in~\cite[Theorem 3]{ArazyFisher2} in the general case. This kind of analysis was later developed also on bounded homogeneous domains (cf.~\cite{Ishi6}) and in homogeneous Siegel domains (cf.~\cite{Ishi3,Ishi4,Ishi5}).

In addition, also Dirichlet-type $\widetilde U_\lambda$-invariant spaces were considered. It was proved that, when $D$ is the unit ball in $\C^n$ (that is, when the rank $r$ of $D$ is $1$), then there are non-trivial non-Hausdorff semi-Hilbert subspaces $H$ of $\Hol(D)$ in which $\widetilde U_\lambda$ induces a bounded representation satisfying some form of continuity, if and only if $\lambda\in - \N$, and that there is only one such space, up to isomorphisms: see~\cite{Peetre2} for the unit disc in $\C$; see~\cite{Peetre} and~\cite{Zhu}  for the case $\lambda=0$, as mentioned earlier, and for isometric invariance; see~\cite[Theorems 2 and 5]{ArazyFisher2} and also~\cite[Theorem 5.2]{Arazy} for the case of isometric invariance and for the general case when $\lambda=0$, and~\cite[Theorem 5.3]{Rango1} for the general case.

For domains $D$ of higher rank, the situation is  more complicated, and the study of this problem is largely based on the  decomposition of the space of polynomials on $D$ into mutually inequivalent irreducible subspaces under the action of the group of linear automorphisms of $D$ (which is a maximal compact subgroup of the group  $G$ of biholomorphisms of $D$ when $D$ is in its circular convex relatization), cf.~\cite{FarautKoranyi2}. The existence and uniqueness problem has been  completely solved, even though the resulting spaces do  not always have a clear description, especially on Siegel domains which are not of tube type: cf.~\cite{ArazyFisher3} and~\cite[Theorem 5.2]{Arazy} for isometric invariance, and Theorems~\ref{teo:4} and~\ref{teo:5} below for the general case. 

Let us also mention that there is a number of papers where  the (scalar products of the) preceding spaces are described in terms of integral formulas involving suitably\footnote{In fact, invariance is only required under the action of a suitable subgroup of $G_0$, which is not always the same.} invariant differential operators. See~\cite{Arazy2,ArazyUpmeier,ArazyUpmeier2} for irreducible bounded symmetric domains of tube type,~\cite{Peloso,Arazy3} for the case of the unit ball in $\C^n$, and~\cite{Yan} for general irreducible bounded symmetric domains. See also~\cite{Garrigos} for irreducible symmetric tube domains (that is, tube type domains in their unbounded  realization as Siegel domains) and~\cite{Arcozzietal} for the Siegel upper half-space, that is, the Siegel domain corresponding the unit ball in $\C^n$. 

Finally, we also mention that other classes of invariant spaces have been investigated, satisfying suitable minimality or maximality properties. See~\cite{ArazyFisher4,Peloso,ArazyFisherPeetre,Zhu,ArazyUpmeier3,AlemanMas,Survey} to name but a few.

\medskip

In this paper we consider the above and some related problems. We shall deal with the realization of $D$ as a Siegel domain of type II, so that
\[
D=\Set{(\zeta,z)\in E\times F_\C\colon \Im z-\Phi(\zeta)\in \Omega},
\]
where $E$ is a complex Hilbert space of dimension $n$, $F$ is a real Hilbert space of dimension $m$, $F_\C$ is its complexification, $\Omega$ is an open convex cone not containing affine lines in $F$, $\Phi\colon E\times E\to F_\C$ is a non-degenerate $\overline{\Omega}$-positive hermitian map, and $\Phi(\zeta)=\Phi(\zeta,\zeta)$ for every $\zeta\in E$. 

After recalling some basic facts and notation, we shall consider the problem of classifying all $\Aff$-$\Uc_{\lambda}$-invariant semi-Hilbert spaces of holomorphic functions on $D$, where $\Uc_\lambda$ is defined by
\[
\Uc_\lambda(\phi) f =(f\circ \phi^{-1}) \abs{J\phi^{-1}(0,0) }^{\lambda/p}
\]
for every $\phi\in \Aff$ and for every $f\in \Hol(D)$.
We shall assume that $H$ satisfies a suitable strenghtening of the decency hypotheses considered by Rubel and Timoney~\cite{RubelTimoney}, which we shall call `strong decency'. Namely, we say that $H$ is strongly decent if the space of continuous linear functionals on $H$ which extend to continuous linear functionals on $\Hol(D)$ is dense in $H'$ (in the weak dual topology, or, equivalently, in the strong dual topology). 
This is equivalent to saying that there is a closed subspace $V$ of $\Hol(D)$ such that $H\cap V$ is the closure of $\Set{0}$ in $H$ and the canonical mapping $H\to \Hol(D)/V$ is continuous (cf.~Proposition~\ref{prop:6}). On the one hand, this requirement is analogous to the   assumptions considered in~\cite{ArazyFisher2,Arazy} to deal with the bounded case (and M\"obius invariance), as we shall see in Remark~\ref{oss:1}. On the other hand, even in the $1$-dimensional case it is not clear to us whether the simple decency assumption is sufficient to prevent some algebraic issues that may occur when classifying $\Aff$-$\Uc_\lambda$-invariant spaces (and even $\widetilde G $-$\widetilde U_\lambda$-invariant spaces, in some cases). See~\cite[Section 4]{Rango1} for a lenghtier discussion of these issues.

When $D$ is a tube domain, we are then able to provide a complete classification of the above mentioned spaces using the description of $G(\Omega)$-invariant irreducible subspaces of the space of polynomials on $F$ provided in~\cite[Theorem XI.2.4]{FarautKoranyi}, where $G(\Omega)$ denotes the group of linear automorphisms of $\Omega$, combined with a description of a related class of mean-periodic functions provided in~\cite[Proposition 7.1]{Rango1}.  
For the case of Siegel domains of rank $1$, that is, those corresponding to the unit ball in $\C^{n+1}$, see~\cite{Rango1}.

We then pass to M\"obius-invariant spaces and describe, when $D$ is a tube domain, which of the preceding $\Aff$-$\Uc_\lambda$-invariant spaces are actually $\widetilde G$-$\widetilde U_\lambda$-invariant (cf.~Theorems~\ref{teo:2} and~\ref{teo:4}), thus extending~\cite{Garrigos} in the setting of Siegel domains. For what concerns more general Siegel domains, we are only able to obtain partial results, even though we are able to strengthen  the known uniqueness results (cf.~Theorem~\ref{teo:5}).

Concerning our methods, the techniques applied to deal with affinely-invariant spaces on tube domains  seem to be new, up to some extent, and are essentially based on the study of the zero locus of the seminorm. The study of M\"obius-invariant spaces is largely based on the previous works on the subject (cf., e.g.,~\cite{Garrigos} for tube domains and~\cite{ArazyFisher2,Arazy} for general domains), combined with our results on tube domains.

\medskip

Here is a plan of the paper. In Section~\ref{sec:1}, we shall collect several basic definitions and facts concerning homogeneous Siegel domains of type II and their groups of automorphisms, as well as establish our notation. Among the various algebraic descriptions of symmetric  cones, we shall generally stick to that of Jordan algebras (cf.~\cite{FarautKoranyi}); for simplicity, we shall avoid the formalism of Jordan triple systems (cf., e.g.,~\cite{Loos}) and refer to specific results when we need more information on symmetric domains which are not of tube type.  We also collect some remarks on reproducing kernel Hilbert spaces and recall the definition and some basic properties of strongly decent and saturated spaces.

In Section~\ref{sec:2}, we shall describe some known results on the $G_T$-$\Uc_\lambda$-invariant reproducing kernel Hilbert spaces of holomorphic functions on $D$, where $G_T$ is a simply transitive triangular group of affine biholomorphisms of $D$. We shall then apply these results in order to deal with $\Aff$-$\Uc_\lambda$-invariant semi-Hilbert spaces on (irreducible symmetric) tube domains. 
In Section~\ref{sec:4}, we shall deal with M\"obius-invariant spaces on general (irreducible symmetric) Siegel domains.

\section{Preliminaries}\label{sec:1}

\subsection{General Notation}\label{sec:1:1}

Throughout the paper, $E$ will denote a complex Hilbert space of dimension $n\Meg 0$, $F$ a real Hilbert space of dimension $m>0$, and $F_\C$ its complexification. We shall denote by $\Omega$ a  symmetric cone in $F$, that is, an open convex cone which does not contain affine lines, has a transitive group of linear automorphisms, and is self-dual with respect to the scalar product of $F$, that is, 
\[
\Omega=\Set{x\in F\colon \forall y\in \overline{\Omega}\setminus \Set{0}\:\: \langle x,y\rangle>0}.
\] 
We shall also assume that $\Omega$ is irreducible, that is, that $\Omega$ cannot be decomposed as the product of two (non-trivial) symmetric cones. Finally, we shall denote with $\Phi\colon E\times E\to F_\C$ a non-degenerate $\overline{\Omega}$-positive  hermitian mapping such that the Siegel domain
\[
D=\Set{(\zeta,z)\in E\times F_\C\colon \Im z-\Phi(\zeta)\in \Omega},
\]
where $\Phi(\zeta)=\Phi(\zeta,\zeta)$ for simplicity, is symmetric. In other words, for every $(\zeta,z)\in D$ there is an involutive biholomorphism $\iota$ of $D$ such that $(\zeta,z)$ is an isolated fixed point of $D$. Notice that $D$ is then homogeneous (cf.~\cite[No.\ 17]{Cartan}), that is, has a transitive group of biholomorphisms. In addition, since $\Omega$ is assumed to be irreducible, also $D$ is irreducible (cf.~\cite[Corollary 4.8]{Nakajima}), that is, $D$ is not biholomorphic to a product of two (non-trivial) symmetric Siegel domains. We shall denote by $e_\Omega$ a fixed point of $\Omega$.

It is then known that the group $\Aff$ of affine automorphisms of $D$ acts transitively on $D$ (cf.~\cite[Theorem 7.3]{Murakami}). In addition, $\Nc=E\times F$, endowed with the product defined by
\[
(\zeta,x)(\zeta',x')=(\zeta+\zeta', x+x'+2\Im \Phi(\zeta,\zeta')),
\]
becomes a $2$-step nilpotent Lie group with centre $F$, and acts freely and faithfully on $E\times F_\C$ and $D$ by affine transformations. Namely,
\[
(\zeta,x)\cdot (\zeta',z')=(\zeta+\zeta', z'+x+i\Phi(\zeta)+2 i \Phi(\zeta',\zeta))
\]
for every $(\zeta,x)\in \Nc$ and for every $(\zeta',z')\in E\times F_\C$.
Identifying $\Nc$ with a subgroup of $\Aff$, it then follows that $\Nc$ is a closed normal subgroup of $\Aff$ and that $\Aff$ is the semi-direct product of $\Nc$ and the group $GL(D)$ of linear automorphisms of $D$. Notice that
\[
GL(D)=\Set{A\times B_\C\colon A\in GL(E),  B\in G(\Omega), B_\C \Phi=\Phi(A\times A)},
\]
where $G(\Omega)$ denotes the group of linear automorphisms of $\Omega$ and $B_\C=B\otimes_\R \C$ (cf.~\cite[Propositions 2.1 and 2.2]{Murakami}).

\subsection{Symmetric Cones}\label{sec:Sym}

In this subsection, we recall some basic aspects of  the theory of (irreducible) symmetric cones, and describe some examples.

\begin{deff}
	A (real or complex) Jordan algebra is a commutative, not necessarily associative (real or complex) algebra $A$ such that $x^2(xy)=x(x^2 y)$ for every $x,y\in A$.
	A real Jordan algebra $A$ is said to be Euclidean if it is endowed with a scalar product such that $\langle x y\vert z\rangle=\langle y\vert x z\rangle$ for every $x,y,z\in A$.
\end{deff}

See~\cite{FarautKoranyi} for a more detailed study of (Euclidean) Jordan algebras and a proof of the following result (Theorems III.2.1 and III.3.1 of the cited reference).

\begin{prop}
	If $A$ is a finite-dimensional real Euclidean Jordan algebra with identity $e$, then the interior $S(A)$ of $\Set{x^2\colon x\in A} $ is a symmetric cone in $A$. 
	
	Conversely, if $C$ is a symmetric cone in $F$ and $e\in C$, then there is a Euclidean Jordan algebra structure on $F$, with identity $e$ and the same scalar product, such that $C=S(F)$.
\end{prop}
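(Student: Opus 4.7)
\medskip

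\noindent\textbf{Proof plan.}

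For the forward implication, the plan is to exploit the spectral theorem for finite-dimensional Euclidean Jordan algebras (which provides, for each $x\in A$, a decomposition $x=\sum_{j=1}^r \lambda_j c_j$ into a sum of pairwise orthogonal primitive idempotents $c_j$ with real spectrum $\lambda_j$). First I would identify $S(A)$ with the set of elements whose spectrum is strictly positive; this simultaneously shows that $S(A)$ is open (since the invertible elements form an open set, and positivity of the spectrum is an open condition among them), that $S(A)$ is the connected component of $e$ in the set of invertible elements, and that $S(A)$ coincides with the interior of the cone of squares. Openness plus the obvious cone property give a convex open cone (convexity follows once one knows that the sum of two positive elements has positive spectrum, which in turn follows from the fact that $\langle x, c \rangle_\tau := \tr L_{xc}$ is positive for every primitive idempotent $c$ when $x$ has positive spectrum). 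Homogeneity then follows by showing that the quadratic representation $P(x)=2L_x^2-L_{x^2}$ belongs to $GL(S(A))$ for every $x\in S(A)$, and that the orbit of $e$ under $\{P(x): x\in S(A)\}$ is all of $S(A)$ (since $P(x)e = x^2$ and squaring is surjective on $S(A)$). For self-duality, one uses the trace form $\tau(x,y)=\tr L_{xy}$, shows that it is positive definite and invariant under the $L_x$, and deduces that $S(A)$ is self-dual with respect to $\tau$; one finally transfers this to the original scalar product by observing that the associativity condition $\langle xy,z\rangle=\langle y,xz\rangle$ forces $\langle\cdot,\cdot\rangle$ to be a positive multiple of $\tau$ on each simple summand of $A$.

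For the converse, the plan is to invoke the Koecher--Vinberg construction. Starting from $(C,e)$, I would introduce the characteristic function
\[
\varphi(x)=\int_{C^{*}} \ee^{-\langle x,y\rangle}\,\dd y \qquad (x\in C)
\]
(where $C^{*}=C$ by self-duality), which is a $G(C)$-relatively invariant, strictly logarithmically convex function on $C$. The Hessian $g_x := -\dd^2\log\varphi(x)$ then defines a $G(C)$-invariant Riemannian metric on $C$, with respect to which $C$ is a Riemannian symmetric space of non-compact type whose geodesic symmetry at $e$, call it $\sigma$, is linear. One now uses $\sigma$ and the differential of the isotropy action at $e$ to produce a linear operator $L_x\in \End(F)$, symmetric with respect to $g_e$, uniquely determined by $L_x e = x$ and by commutation with the stabilizer of $e$ in $G(C)$; setting $xy := L_x y$ gives a commutative product. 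The Jordan identity $L_x L_{x^2}=L_{x^2}L_x$ is then extracted from the fact that $x\mapsto x^{-1}$ (the geodesic inversion around $e$) is a polynomial map whose differential satisfies the standard identity $-P(x)^{-1}$ for the quadratic representation, and the cone of squares is shown to equal $C$ by noting that both coincide with the orbit $G(C)e$. Finally, I would check that $g_e$ is a positive multiple of the given scalar product on each irreducible factor and rescale accordingly to obtain the matching of scalar products (which is possible precisely because $C$ is irreducible or splits into irreducible factors, and on each simple summand any two $G(C)$-invariant scalar products are proportional).

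\medskip

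The main obstacle I anticipate is the converse direction, specifically verifying the Jordan identity for the product constructed from the geometry of $C$. Commutativity is built in, but $L_x L_{x^2}=L_{x^2}L_x$ requires a delicate argument linking the symmetry $\sigma$ of $C$ at $e$ to the algebraic structure, and historically this is precisely the content of Koecher's theorem. The matching of the scalar product is a comparatively minor issue, handled by Schur-type reasoning on each simple summand.
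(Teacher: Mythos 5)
The paper does not actually prove this proposition: it simply cites Theorems III.2.1 and III.3.1 of Faraut--Kor\'anyi, and your plan is essentially a faithful outline of the proofs given there (spectral theorem, quadratic representation and trace form for the forward direction; the Koecher--Vinberg construction via the characteristic function for the converse), so the approach matches the one the paper relies on. One factual slip in your sketch of the converse should be corrected: the geodesic symmetry of $C$ at $e$ is the inversion $x\mapsto x^{-1}$, a \emph{rational} (not polynomial, and certainly not linear) involution whose differential at $x$ is $-P(x)^{-1}$; what is linear is only its differential at the fixed point $e$, namely $-\mathrm{id}$. Since your subsequent steps use the correct description of this inversion and its differential, the slip does not derail the argument, but the sentence asserting that $\sigma$ is linear is false as stated.
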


Therefore, $F$ may be endowed with the structure of a Euclidean Jordan algebra with the same scalar product and identity $e_\Omega$, in sch a way that $S(F)=\Omega$. 
We shall then endow $F_\C$ with the complexification of the Jordan algebra structure of $F$, so that $F_\C$ is a (complex) Jordan algebra with identity $e_\Omega$.

Since $\Omega$ is assumed to be irreducible, $F$ is then a simple Jordan algebra, that is, $F$ does not contain  non-trivial ideals (cf.~\cite[Propositions III.4.4 and III.4.5]{FarautKoranyi}).
Finite-dimensional simple unital Euclidean real Jordan algebras may be classified, up to isomorphism (cf.~\cite[Corollary IV.1.5 and Theorem V.3.7]{FarautKoranyi}). We shall describe in   Examples~\ref{ex:1} and~\ref{ex:2} a class of representatives of all finite-dimensional simple unital Euclidean real Jordan algebra. Notice that this description is somewhat redundant.

\begin{ex}\label{ex:1}
	Take an integer $r\Meg 1$ and let $\F$ be either $\R$, $\C$, or the division ring of Hamilton quaternions $\Hd$. Then, the space $A$ of hermitian $r\times r$ matrices over $\F$, endowed with the symmetrized product $x\circ y= (x y+y x)/2$ and the scalar product $(x,y)\mapsto \Re \tr(xy)=\tr (x\circ y)$,  is a real Euclidean  Jordan algebra with identity $I_r=(\delta_{j,k})_{j,k=1,\dots, r}$. 
	The symmetric cone $S(A)$ is then the cone of non-degenerate positive hermitian $r\times r$ matrices over $\F$.
	The same holds if $r\meg 3$ and $\F$ is the division algebra of Cayley octionions $\Od$, even though this latter fact is more difficult to prove (cf.~\cite[Corollary V.2.6]{FarautKoranyi}).
	
	In particular, if $r=1$, then $A=\R$ with the usual structure, and $S(A)=(0,\infty)$.
\end{ex}

\begin{ex}\label{ex:2}
	Take an integer $k \Meg 1$, and let $A$ be the algebra of $2\times 2$ formal symmetric matrices of the form $\left(\begin{smallmatrix} a & b\\ b & c \end{smallmatrix}\right)$, with $a,c\in \R$ and $b\in \R^{k}$, endowed with the symmetrized product 
	\[
	\left(\begin{matrix} a & b\\ b & c \end{matrix}\right)\circ \left(\begin{matrix} a' & b'\\ b' & c' \end{matrix}\right)=\left(\begin{matrix} a a'+ \langle b,b'\rangle & (a b'+a'b+cb'+c' b)/2\\ (a b'+a'b+cb'+c' b)/2 & cc'+\langle b,b'\rangle \end{matrix}\right)
	\]
	and the scalar product $(x,y)\mapsto \tr(x\circ y) $. In other words, $\langle \left(\begin{smallmatrix} a & b\\ b & c \end{smallmatrix}\right), \left(\begin{smallmatrix} a & b\\ b & c \end{smallmatrix}\right)\rangle= a a'+2 \langle b,b'\rangle+c c'$. Then, $S(A)$ is the set of formally positive non-degenerate symmetric matrices on $A$, that is, the set of $\left(\begin{smallmatrix} a & b\\ b & c \end{smallmatrix}\right)$ with $a>0$ and $\abs{b}^2< ac$. 
	
	Notice that, when $k=1,2,4,8$, we identify $\R^k$ with $\R$, $\C$, $\Hd$, $\Od$, respectively, in such a way that $\langle b, b'\rangle=\Re (b\overline{b'})$, and we replace $  \left(\begin{smallmatrix} a & b\\ b & c \end{smallmatrix}\right)$ with $  \left(\begin{smallmatrix} a & b\\ \overline b & c \end{smallmatrix}\right)$, then we obtain the examples considered in Example~\ref{ex:1} for $r=2$.
\end{ex}

\begin{deff}
	Let $A$ be a (finite-dimensional) Jordan algebra over $\F=\R$ or $\C$ with identity $e$. An element $x$ of $ A$ is said to be invertible in $A$ if $x$ has a (necessarily unique) inverse in the associative subalgebra $\F[x]$ of $A$ generated by $x$ and $e$. We then define $x^{-1}$ as the inverse of $x$ in $\F[x]$.
	
	In addition, we define $\det_A(x)$ as the determinant of the mapping $\F[x]\ni y\mapsto x y\in \F[x]$. We call $\det_A$ the determinant polynomial of $A$.
\end{deff}

Notice that $\det_A(x)\neq 0$ if and only if $x$ is invertible in $A$, and that $\det_A(x)$ is the \emph{norm} of $x$ relative to the associative algebra $\F[x]$.

\begin{ex}\label{ex:3}
	If $A$ is as in Example~\ref{ex:1} (and $\F\neq \Od$), then $x\in A$ is invertible in $A$ if and only if it is invertible as a matrix, in which case the inverse of $x$ in $A$ is the inverse of $x$ as a matrix. This happens because the algebras generated by $x$ and $e$ in $A$ and in the algebra of $r \times r$ matrices coincide, and have the same product. In addition, $\det_A$ is the real determinant when $\F=\R$, and the complex determinant when $\F=\C$; when $\F=\Hd$ and $A$ is identified with a suitable algebra of skew-symmetric $(2r)\times (2 r)$ complex matrices (cf.~\cite[p.~88]{FarautKoranyi}), then $\det_A$ becomes the Pfaffian, possibly up to a unimodular constant which depends on the chosen identification (we provide no interpretations of $x^{-1}$ and $\det_A(x)$ when $r= 3$ and $\F=\Od$).
	
	If $A$ is as in Example~\ref{ex:2}, then $\left(\begin{smallmatrix} a & b\\ b & c \end{smallmatrix}\right)$ is invertible in $A$ if and only if $\det_A \left(\begin{smallmatrix} a & b\\ b & c \end{smallmatrix}\right)= ac- \abs{b}^2$ is non-zero, in which case
	\[
	\left(\begin{matrix} a & b\\ b & c \end{matrix}\right)^{-1}=\frac{1}{ac-\abs{b}^2} \left(\begin{matrix} c & -b\\ -b & a \end{matrix}\right).
	\]
\end{ex}

\begin{deff}
	Let $A$ be a (finite-dimensional) Jordan algebra with identity $e$. A Jordan frame in $A$ is a  family $(e_j)$ of non-zero idempotents of $A$ such that $e_j e_{j'}=0$ for every $j,j'$, $j\neq j'$, such that $\sum_j e_j=e$, and such that no $e_j$ can be written as a sum of two non-zero idempotents. The rank of $A$ is the common length of   its Jordan frames (cf.~\cite[Theorems III.1.1 and III.1.2]{FarautKoranyi}).
\end{deff}

\begin{deff}
	Let $(e_j)$ be a Jordan frame of a unital  Euclidean  real Jordan algebra $A$. Then, $A_j\coloneqq \Set{x\in A\colon (e_1+\cdots+e_j)x=x}$ is a Jordan subalgebra of $A$ with identity $e_{1}+\cdots+e_j$. Denote by $\pr_j\colon A\to A_j$ the orthogonal projector. We may then define the generalized power functions
	\[
	\Delta_{(e_1,\dots,e_r)}^{\vect s}\colon S(A)\ni x \mapsto ({\det}_{A_1} \pr_1(x))^{s_1-s_{2}}\cdots({\det}_{A_{r-1}} \pr_{r-1}(x))^{s_{r-1}-s_r}  ({\det}_{A_r} \pr_r(x))^{s_r}\in \C
	\]
	for every $\vect s\in \C^r$.
\end{deff}

\begin{ex}\label{ex:4}
	If $A$ is as in Example~\ref{ex:1}, then the idempotents $e_j\coloneqq (\delta_{p,j}\delta_{q,j} )_{p,q=1,\dots,r}$, $j=1,\dots,r$ form a Jordan frame of $A$. In particular, $A$ has rank $r$. When $\F=\R$ or $\C$, the corresponding functions $\det_{A_j}$ are  the  minors over $\F$ corresponding to the first $j$ rows and columns, thanks to Example~\ref{ex:3}.
	
	If $A$ is as in Example~\ref{ex:2}, then the idempotents $e_1=\left(\begin{smallmatrix} 1 & 0\\ 0 & 0 \end{smallmatrix}\right)$ and $e_2=\left(\begin{smallmatrix} 0 & 0\\ 0 & 1 \end{smallmatrix}\right)$ form a Jordan frame of $A$. In particular, $A$ has rank $2$. The corresponding function $\det_{A_1}$ is then simply the projection $\left(\begin{smallmatrix} a & b\\ b & c \end{smallmatrix}\right)\mapsto a$.
\end{ex}

\subsection{Riesz Distributions on $\Omega$ and the Orbit Decomposition of $\overline \Omega$}\label{sec:2:7}

In this subsection we shall discuss some basic properties of the triangular subgroups of $G(\Omega)$ which act simply transitively on $\Omega$. This theory actually applies to general homogeneous cones (cf.~\cite{Vinberg}).

From now on, we shall then fix a Euclidean Jordan algebra structure on $F$, with the same scalar product, identity $e_\Omega$, and associated symmetric cone $\Omega$. Since $\Omega$ is assumed to be irreducible, $F$ may be described as in Examples~\ref{ex:1} and~\ref{ex:2}. We shall then fix a frame $(e_1,\dots,e_r)$ of $F$ and simply write $\Delta^{\vect s}$ instead of $\Delta^{\vect s}_{(e_1,\dots,e_r)}$, for every $\vect s\in \C^r$. In addition, we shall write $\Delta_*^{\vect s}$ instead of $\Delta^{\sigma(\vect s)}_{(e_r,\dots,e_1)}$, where 
\[
\sigma(s_1,\dots, s_r)=(s_r,\dots, s_1).
\]
The relevance of this latter functions is partially explained by the following result.

\begin{prop}\label{prop:3b}
	There is $k$ in the stabilizer $K_0$ of $e_\Omega$ in $G_0(\Omega)$ such that $k e_j= e_{r-j-1}$ for every $j=1,\dots,r$. For every such $k$, 
	\[
	\Delta^{\vect s}(x)=\Delta_*^{\sigma(\vect s)}(k x) \qquad\text{and} \qquad \Delta^{\vect s}(x^{-1})=\Delta_*^{-\vect s}(x)=\Delta^{-\sigma(\vect s)}(k^{-1} x)
	\]
	for every $\vect s\in \C^r$ and for every $x\in \Omega$, where $\sigma(s_1,\dots, s_r)=(s_r,\dots, s_1)$.
\end{prop}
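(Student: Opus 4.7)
The plan is to first obtain $k$ by transitivity, then deduce the first identity from the way $k$ intertwines the Peirce subalgebras of the two frames, and finally combine this with the standard inversion formula for Jordan-algebra power functions to get the remaining two identities.

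For existence, the key fact is that $K_0$ coincides with the identity component of the Jordan algebra automorphism group of $F$ (cf.~\cite[Theorem III.5.1]{FarautKoranyi}), and the latter acts transitively on the set of Jordan frames of $F$ (cf.~\cite[Theorem IV.2.5]{FarautKoranyi}). Since $(e_r,\ldots,e_1)$ is a Jordan frame, this yields $k\in K_0$ with $k e_j = e_{r+1-j}$ for every $j$; I read the exponent $r-j-1$ in the statement as a typographical slip for $r+1-j$.

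Next, to prove $\Delta^{\vect s}(x)=\Delta_*^{\sigma(\vect s)}(kx)$, set $c_j := e_1+\cdots+e_j$ and $\tilde c_j := e_r+\cdots+e_{r+1-j}=k c_j$, and write $\tilde A_j$ and $\tilde{\pr}_j$ for the Peirce subalgebra $A(\tilde c_j,1)$ of the reversed frame and the orthogonal projection onto it. Since $k$ is both a Jordan algebra automorphism and an orthogonal transformation, it maps $A_j$ isomorphically (as a unital Jordan algebra) onto $\tilde A_j$, and $k\circ\pr_j=\tilde{\pr}_j\circ k$ on $F$. A unital Jordan algebra isomorphism preserves the associative subalgebra generated by any element and intertwines the corresponding multiplication operators, so $\det_{A_j}(y)=\det_{\tilde A_j}(ky)$ for every $y\in A_j$. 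Substituting these identities into the defining product for $\Delta^{\vect s}$ yields, with the convention $s_{r+1}:=0$,
\[
\Delta^{\vect s}(x) = \prod_{j=1}^r \bigl(\det_{A_j}\pr_j x\bigr)^{s_j-s_{j+1}} = \prod_{j=1}^r \bigl(\det_{\tilde A_j}\tilde{\pr}_j(kx)\bigr)^{s_j-s_{j+1}} = \Delta^{\vect s}_{(e_r,\ldots,e_1)}(kx) = \Delta_*^{\sigma(\vect s)}(kx),
\]
where the last equality is simply the definition of $\Delta_*$ at $\sigma(\vect s)$.

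For the remaining two identities, I would invoke the standard inversion formula $\Delta^{\vect s}(x^{-1})=\Delta_*^{-\vect s}(x)$ (cf.~\cite[Proposition VII.1.5]{FarautKoranyi}), and then apply the identity just established with exponent $-\sigma(\vect s)$ and argument $k^{-1}x$, obtaining $\Delta^{-\sigma(\vect s)}(k^{-1}x)=\Delta_*^{\sigma(-\sigma(\vect s))}(x)=\Delta_*^{-\vect s}(x)$ (using $\sigma^2=\mathrm{id}$). Combined with the inversion formula, this gives $\Delta^{\vect s}(x^{-1})=\Delta^{-\sigma(\vect s)}(k^{-1}x)$. The main obstacle is really just the index bookkeeping---verifying that the Peirce subalgebras for the reversed frame are the $k$-images of the original ones and that the associated orthogonal projections intertwine with $k$---but this becomes routine once one uses that elements of $K_0$ are simultaneously Jordan algebra automorphisms and isometries.
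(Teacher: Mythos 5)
Your proof is correct and follows essentially the same route as the paper: existence of $k$ via transitivity of the (identity component of the) Jordan automorphism group on Jordan frames, the first identity via the intertwining of the Peirce subalgebras and their projections under $k$, and the remaining identities from \cite[Proposition VII.1.5]{FarautKoranyi} combined with the first. Your reading of $e_{r-j-1}$ as a typo for $e_{r+1-j}$ is also the correct interpretation.
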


\begin{proof}
	The existence of $k$ follows from~\cite[Corollary IV.2.7]{FarautKoranyi}. Notice that such a $k$ is necessarily an automorphism of $F$ as a Euclidean Jordan algebra (cf.~\cite[Theorem III.5.1]{FarautKoranyi}). Then, set 
	\[
	F_j\coloneqq \Set{x\in F\colon (e_1+\cdots+e_j)x=x}\qquad \text{and} \qquad F'_j= \Set{x\in F\colon (e_{r-j+1}+\cdots+e_r) x=x},
	\]
	and let $\pr_j$ and $\pr_{j}'$ be the orthogonal projectors of $F$ onto $F_j$ and onto $F_j'$, respectively.   Then,
	\[
	k F'_j= F_j, \qquad k F_j=F_{j}', \qquad \pr_j k=k\pr_{j}'\qquad \text{and} \qquad \pr_{j}'k=k\pr_j
	\] 
	for every $j=1,\dots,r$. Consequently, $\det_{F'_j}(\pr'_j(k x))=\det_{F'_j}(k \pr_j x)=\det_{F_j}(\pr_j x)$ for every $x\in F$, since $k$ induces an isomorphism of $F_j$ onto  $F'_j$ as Jordan algebras. We have thus proved the first equality. The second equality follows from~\cite[Proposition VII.1.5]{FarautKoranyi}.
\end{proof}

\begin{deff}
	We denote by $\N_\Omega$ the set of $\vect s\in \C^r$ such that $\Delta^{\vect s}$ is polynomial, so that $\N_\Omega=\Set{\vect s\in \N^r\colon s_1\Meg \cdots \Meg s_r}$ (cf.~\cite[Proposition XI.2.1]{FarautKoranyi}). We shall also write $\N^*_\Omega$ instead of $\sigma(\N_\Omega)$, so that $\N^*_\Omega$ is the set of $\vect s\in \C^r$ such that $\Delta_*^{\vect s}$ is polynomial.
\end{deff}
We shall now define, case by case, a group of lower triangular matrices which acts simply transitively on $\Omega$ by linear automorphisms    (cf.~\cite[Proposition VI.3.8]{FarautKoranyi} and also~\cite{Vinberg} for an abstract general construction).

\begin{ex}\label{ex:5}
	Assume that $F$ is the Jordan algebra of Example~\ref{ex:1}. We define $T_-$ as the group of lower triangular $r\times r$ matrices over $\F$ with strictly positive diagonal entries. If $\F\neq \Od$, we let $T_-$ act linearly on $F$  by
	\[
	t\cdot x=t x t^*,
	\]
	so that $T_-$ acts simply transitively on $\Omega$. If $r\meg 3$ and $\F=\Od$, we define the action of $T_-$ on $F$ by describing its differential $\dd\pi$ at the identity, which is a homomorphism of the Lie algebra $T$ of $T_-$, that is, the group of lower triangular $r\times r$ matrices over $\Od$ with real diagonal entries, into $\Lc(F)$. Namely,
	\[
	\dd \pi\colon t\mapsto [F\ni x\mapsto t x+ x t^*\in F].
	\]
	Since $T_-$ is simply connected, the action $t\cdot x$ is well defined, and one may prove that
	\[
	t\cdot e_\Omega= t t^*
	\]
	and that $T_-$ acts simply transitively on $\Omega$.
\end{ex}

\begin{ex}\label{ex:6}
	Assume that $F$ is the Jordan algebra of Example~\ref{ex:2}. We define $T_-$ as the group of formal lower triangular matrices with strictly positive diagonal entries, and we let $T_-$ act linearly on $F$ by
	\[
	\left(\begin{matrix}	a & 0\\ b & c	\end{matrix}\right)\cdot 
	\left(\begin{matrix}	a' & b'\\ b' & c'	\end{matrix}\right)=
	\left(\begin{matrix}	a^2 a' & a c b'+a a' b\\ a c b'+a a' b & c^2 c'+2 c \langle b,b'\rangle+\abs{b}^2 a'	\end{matrix}\right)
	\]
	so that $T_-$ acts simply transitively on $\Omega$ (direct computation). Notice that, in analogy with Example~\ref{ex:5}, one may interpret (formally) $t\cdot x$ as $(t x)t^*=t(x t^*)$, denoting by $t^*$ the transpose of $t$.
\end{ex}

Notice that, in both examples (cf.~\cite[Proposition VI.3.10]{FarautKoranyi}),
\[
\Delta^{\vect s}(t\cdot e_\Omega)=\Delta^{\vect s}_*(e_{\Omega}\cdot t)=\prod_{j=1}^r t_{j,j}^{2 s_j}
\]
for every $t\in T_-$, and for every $\vect s\in \C^r$, denoting by $x\cdot t$ the adjoint of $t\cdot $ evaluated at $x$. We shall therefore also write $\Delta^{\vect s}(t)$ instead of $\prod_{j=1}^r t_{j,j}^{2 s_j}$. Then, the $\Delta^{\vect s}$, $\vect s\in \C^r$, are precisely the characters of $T_-$.

\begin{deff}
	We define $T_-$ and its left action on $F$ as in Examples~\ref{ex:5} and~\ref{ex:6}. We denote by $x\cdot t$ the ajoint action of $t\in T_-$ on $x\in F$. In addition, we define $a=\dim_\R \F$ when $F$ is as in Example~\ref{ex:1} and $a=k$ when $F$ is as in Example~\ref{ex:2}. Then, $a(r-1)/2=m/r-1$.
	
	For every $\epsb\in \Set{0,1}^r$, we define
	\[
	\vect m^{(\epsb)}=\left( a\sum_{k<j} \eps_k  \right)_{j=1,\dots, r} \qquad \text{and} \qquad\vect m'^{(\epsb)}=\left( a \sum_{k>j} \eps_k  \right)_{j=1,\dots, r}
	\]
	and an order relation $\preceq_\epsb$ on $\C^r$ by
	\[
	\vect s \preceq_{\epsb} \vect s' \iff \vect s=\vect s' \lor \vect s'-\vect s\in \epsb(\R_+^*)^r.
	\]
	Hence, $\vect s\prec_{\epsb} \vect s'$ if and only if $s_j<s'_j$ for every $j$ such that $\eps_j=1$, while $s_j=s'_j$ for every $j$ such that $\eps_j=0$ (and $\vect s\neq \vect s'$ if $\epsb=\vect 0$).

	We simply write $\vect m$, $\vect m'$,  $\prec$, and $\succ$ instead of $\vect m^{(\vect 1_r)}$, $\vect m'^{(\vect 1_r)}$,  $\prec_{\vect 1_r}$, and $\succ_{\vect 1_r}$, respectively. 
\end{deff}

\begin{deff}\label{def:1}
We denote by $(I^{\vect s}_\Omega)_{\vect s\in \C^r}$  the unique holomorphic family of tempered distributions on $F$ supported in $\overline \Omega$ such that $\Lc I^{\vect s}=\Delta_{*}^{-\vect s}$ and $\Lc I^{\vect s}_{*}=\Delta^{-\vect s}$ on $\Omega$ for every $\vect s\in \C^r$, where $\Lc$ denotes the Laplace transform on $F$ (cf.~\cite[Proposition 2.28]{CalziPeloso}).  

We define the Gindikin--Wallach sets $\Gc(\Omega)$ and $\Gc_*(\Omega)$ as the sets of $\vect s\in \C^r$ such that $I^{\vect s}$ and $I^{\vect s}_{*}$ are positive Radon measures, respectively, so that $\Gc_*(\Omega)=\sigma(\Gc(\Omega))$.
\end{deff}

Notice that, in particular,  $\Delta^{\vect s}$ and $\Delta^{\vect s}_*$ extend to holomorphic functions on $\Omega+ i F$ for every $\vect s\in \C^r$.

Since we shall sometimes need to consider how the $\Delta^{\vect s}$ interact with the operators $I^{-\vect s'}$, $\vect s'\in \N_{\Omega}^*$, for the  reader's convenience we shall recall the following result (cf.~\cite[Proposition VII.1.6]{FarautKoranyi} or~\cite[Proposition 2.29]{CalziPeloso}).

\begin{lem}\label{lem:8}
Take $\vect s\in \C^r$ and $\vect s'\in \N_{\Omega}^*$. Then,
\[
\Delta^{\vect s}*I^{-\vect s'}= \Big(\vect s+\frac 1 2 \vect m'\Big)_{\vect s'} \Delta^{\vect s-\vect s'}
\]
on $\Omega+ i F$, where $ \big(\vect s+\frac 1 2 \vect m'\big)_{\vect s'}=\prod_{j=1,\dots, r}(s_j+\frac 1 2 m'_j)\cdots  (s_j-s'_j+\frac 1 2 m'_j+1)$.
\end{lem}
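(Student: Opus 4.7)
Plan: The first observation is that $\vect s'\in\N_\Omega^*$ forces the Laplace transform $\mathcal L I^{-\vect s'}=\Delta_*^{\vect s'}$ to be a polynomial on $\Omega+iF$. A tempered distribution with polynomial Laplace transform is necessarily supported at the origin, hence $I^{-\vect s'}$ is a constant-coefficient differential operator $D_{\vect s'}$ on $F$ whose symbol is $\Delta_*^{\vect s'}$. Consequently, the convolution $\Delta^{\vect s}*I^{-\vect s'}$ extends to a well-defined holomorphic function on $\Omega+iF$, namely $D_{\vect s'}\Delta^{\vect s}$, and the whole identity makes sense analytically on $\Omega + iF$.

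The next step is to exploit analyticity in $\vect s$. Both the left-hand side (a fixed differential operator applied to the holomorphic family $\vect s\mapsto \Delta^{\vect s}$) and the right-hand side (a polynomial factor $(\vect s+\frac12\vect m')_{\vect s'}$ times the holomorphic family $\Delta^{\vect s-\vect s'}$) depend holomorphically on $\vect s\in\C^r$ for fixed $\vect s'$. It therefore suffices to establish the identity on a set with non-empty interior, which I would take to be a translate of $-\Gc_*(\Omega)$. On such a region, $\Delta^{\vect s}$ admits a Laplace-type integral representation coming from the positive Riesz measure $I^{-\vect s-\frac12\vect m}_*$ (Definition~\ref{def:1}). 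Under this representation, convolution with $I^{-\vect s'}$ is transported to convolution of Riesz distributions, which by the semigroup property $I^{\vect t}*I^{\vect t'}=I^{\vect t+\vect t'}$ (an immediate consequence of the multiplicativity of the Laplace transform built into Definition~\ref{def:1}) produces $I^{-\vect s-\vect s'-\frac12\vect m}_*$. Taking the Laplace transform back recovers $\Delta^{\vect s-\vect s'}$, so the two sides agree up to a multiplicative constant depending on $\vect s$ and $\vect s'$.

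The final task is to identify that constant with the Pochhammer symbol $(\vect s+\frac12\vect m')_{\vect s'}$. This comes from the shift identity for the Gindikin Gamma function $\Gamma_\Omega$: normalizing $I^{-\vect u}_*$ in terms of $\Gamma_\Omega$ and taking the ratio of $\Gamma_\Omega$-values produces precisely the stated product, using the shift $m'_j = a(r-j)$ encoded in the definition of $\vect m'$. The main obstacle is this bookkeeping: identifying the multiplicative constant exactly as $(\vect s+\frac12\vect m')_{\vect s'}$ rather than merely some polynomial in $\vect s$ of the correct degree. An alternative route, avoiding the Gindikin Gamma function entirely, is to decompose $\vect s'\in\N_\Omega^*$ inductively as a sum of basic elements of the form $(0,\dots,0,1,\dots,1)$ (so that the differential operator factorizes), reduce to verifying the identity at a single well-chosen point such as $e_\Omega$ using $T_-$-covariance and the character identity $\Delta^{\vect s}(t\cdot e_\Omega)=\prod t_{j,j}^{2s_j}$, and finally extend back to $\Omega+iF$ by holomorphic continuation in $z$.
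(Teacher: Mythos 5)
The paper offers no proof of this lemma at all---it is stated with a bare citation to \cite[Proposition VII.1.6]{FarautKoranyi} and \cite[Proposition 2.29]{CalziPeloso}---and your first route is essentially the argument of those references: observe that $I^{-\vect s'}$ is supported at the origin because $\Lc I^{-\vect s'}=\Delta_*^{\vect s'}$ is a polynomial, note that both sides are entire in $\vect s$ so that it suffices to treat $\vect s$ with $-\Re \vect s\succ\frac12\vect m'$, represent $\Delta^{\vect s}=\Lc I_*^{-\vect s}$ there, and extract the constant from Gindikin's Gamma function. Those framing steps are all correct.

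The middle step, however, is wrong as literally written. First, the indices are off: the positive Riesz measure representing $\Delta^{\vect s}$ is $I_*^{-\vect s}$ itself (since $\Lc I_*^{\vect t}=\Delta^{-\vect t}$), not $I_*^{-\vect s-\frac12\vect m}$, and the output measure should be $I_*^{-\vect s+\vect s'}$ (whose Laplace transform is $\Delta^{\vect s-\vect s'}$), not $I_*^{-\vect s-\vect s'-\frac12\vect m}$. Second, and more seriously, convolving $\Lc I_*^{-\vect s}$ in the tube variable with the origin-supported distribution $I^{-\vect s'}$ is transported to \emph{multiplication} of the measure $I_*^{-\vect s}$ by the reflected symbol $(-1)^{s'_1+\cdots+s'_r}\Delta_*^{\vect s'}(\lambda)$, not to a convolution of Riesz distributions; the semigroup property $I^{\vect t}*I^{\vect t'}=I^{\vect t+\vect t'}$ carries no constant, so taken at face value that step would yield the constant $1$ and contradict the statement you are proving. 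The correct mechanism is the one you only invoke afterwards: by Proposition~\ref{prop:17}(3) with $\epsb=\vect 1_r$ one has $\Delta_*^{\vect s'}\cdot I_*^{-\vect s}=\frac{\Gamma_{\Omega^*_{\vect 1_r}}(-\vect s+\vect s')}{\Gamma_{\Omega^*_{\vect 1_r}}(-\vect s)}\, I_*^{-\vect s+\vect s'}$, and the product formula $\Gamma_{\Omega^*_{\vect 1_r}}(\vect t)=c\prod_j\Gamma\bigl(t_j-\frac12 m'_j\bigr)$ turns this ratio into $(-1)^{s'_1+\cdots+s'_r}\bigl(\vect s+\frac12\vect m'\bigr)_{\vect s'}$; the sign cancels against the reflection sign and gives exactly the stated falling factorial. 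With these corrections the argument closes. Your alternative route (factor $\vect s'$ into steps of the form $(0,\dots,0,1,\dots,1)$, use $T_-$-covariance, and evaluate at $e_\Omega$) is also viable, but it trades the Gamma-function bookkeeping for a nontrivial pointwise derivative computation at $e_\Omega$, which is precisely what the Laplace-transform argument is designed to avoid.
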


In the following result we collect some useful facts about the Gindikin--Wallach sets $\Gc(\Omega)$ and $\Gc_*(\Omega)$ (cf.~\cite{Ishi} for a more detailed treatment).

\begin{prop}\label{prop:17}
The following hold:
\begin{itemize}
	\item[\textnormal{(1)}] $\overline \Omega$ is the disjoint union of the $T_-$-orbits $\Omega_\epsb\coloneqq T_-\cdot e_{\epsb}$ (resp.\ $\Omega_{\epsb}^*\coloneqq e_{\epsb}\cdot T_-$) as $\epsb$ runs through $\Set{0,1}^r$, where $e_{\epsb}=\sum_j \eps_j e_j$;
	
	\item[\textnormal{(2)}]  $\Gc(\Omega)$ is the disjoint union  of the sets of $\vect s\in \R^r$ such that $\vect s \succ_{\epsb} \frac 1 2 \vect m^{(\epsb)}$, as $\epsb$ runs through $\Set{0,1}^r$;
	
	\item[\textnormal{(3)}]  if $\epsb\in \Set{0,1}^r$ and $\Re\vect s\succ \frac 1 2 \vect m^{(\epsb)}$ (resp.\ $\Re\vect s\succ \frac 1 2 \vect m'^{(\epsb)}$), then
	\[
	I^{\vect s}=\frac{1}{\Gamma_{\Omega_\epsb}(\epsb\vect s)} \Delta^{\epsb\vect s}_{\epsb}\cdot \nu_{\Omega_\epsb} \qquad (\text{resp.\ } I^{\vect s}_*=\frac{1}{\Gamma_{\Omega_\epsb^*}(\epsb\vect s)} \Delta^{\epsb\vect s}_{\epsb,*}\cdot \nu_{\Omega_\epsb^*})
	\]
	where $  \Delta^{\vect s'}_{ \epsb}(t\cdot e_{\epsb})=\Delta^{\vect s'}_{\epsb,*}(e_\epsb\cdot t)=\Delta^{\vect s'}(t)  $  for every $t\in T_-$ and for every $\vect s'\in \epsb\C^r$, $\nu_{\Omega_\epsb}$ is a relatively $T_-$-invariant positive Radon measure on $\Omega_\epsb$ with left multiplier $\Delta^{(\vect 1_r-\epsb)\vect m^{(\epsb)}/2}$,  $\nu_{\Omega_\epsb^*}$ is a relatively $T_-$-invariant positive Radon measure on $\Omega_\epsb^*$ with right multiplier $\Delta^{(\vect 1_r-\epsb)\vect m'^{(\epsb)}/2}$, and
	\[
	\Gamma_{\Omega_\epsb}(\epsb\vect s)=\int_{\Omega_\epsb} \Delta^{\epsb\vect s}_{\epsb}(h) \ee^{-\langle e_\Omega,h\rangle}\,\dd \nu_{\Omega_\epsb}(h) \qquad (\text{resp.\ } \Gamma_{\Omega_\epsb^*}(\epsb\vect s)=\int_{\Omega_\epsb^*} \Delta^{\epsb\vect s}_{\epsb,*}(h) \ee^{-\langle e_\Omega,h\rangle}\,\dd \nu_{\Omega_\epsb^*}(h));
	\]
	
	\item[\textnormal{(4)}] if $\vect s\in \epsb \C^r\cap \N_\Omega$ (resp.\ $\vect s\in \epsb \C^r\cap \N_\Omega^*$), then 
	\[
	\Delta^{\vect s}_{\epsb}(h)=\Delta^{\vect s}(h) \qquad (\text{resp.\ } \Delta^{\vect s}_{\epsb,*}(h)=\Delta^{\vect s}_*(h))
	\]
	for every $h\in \Omega_\epsb$ (resp.\ for every $h\in \Omega_\epsb^*$).
\end{itemize}
\end{prop}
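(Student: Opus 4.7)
The plan is to follow the strategy of Ishi, combining the Peirce decomposition of the Jordan algebra $F$ with respect to the frame $(e_1,\dots,e_r)$ with the simple transitivity of $T_-$ on $\Omega$. For (1), I would observe that $T_-$ preserves the rank-type of its orbit, so that $T_-\cdot e_\epsb$ consists of elements whose spectral pattern on the frame matches exactly $\epsb$; disjointness across the choices of $\epsb$ is then immediate. That these orbits exhaust $\overline\Omega$ follows from the spectral theorem for Euclidean Jordan algebras: every element of $\overline\Omega$ has a nonnegative spectral decomposition on some frame, and since $T_-$ acts transitively on Jordan frames modulo permutations, one may reduce to the fixed frame $(e_1,\dots,e_r)$ and read off $\epsb$ from the support of the eigenvalues. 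The right orbits $\Omega_\epsb^*$ are handled identically using the reversed frame $(e_r,\dots,e_1)$, in accordance with the definition of $\Delta_*^{\vect s}$.

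For (2), I would invoke (3): the explicit expression of $I^{\vect s}$ as a positive multiple of $\Delta^{\epsb\vect s}_\epsb\cdot\nu_{\Omega_\epsb}$ immediately yields that $I^{\vect s}$ is a positive Radon measure whenever $\vect s\in\R^r$ with $\vect s\succ_\epsb\frac{1}{2}\vect m^{(\epsb)}$. The converse—that no other real $\vect s$ produces a positive Radon measure—is standard and proceeds by analytic continuation: crossing the boundary of a stratum either produces a non-integrable singularity or a sign change in the density. The heart of the proposition is (3). For $\epsb=\vect 1_r$ the statement reduces to the classical Gindikin formula proved in \cite{FarautKoranyi}. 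For general $\epsb$, I would identify the Peirce subspace $F_\epsb\subseteq F$ containing $\Omega_\epsb$ as a Euclidean Jordan subalgebra whose own triangular group acts simply transitively on $\Omega_\epsb$; the claimed expression is then the Gindikin formula internal to $\Omega_\epsb$. Computing its Laplace transform in the ambient $F$ produces the correction factors $\vect m^{(\epsb)}$ and $\vect m'^{(\epsb)}$, which encode the dimensional contributions of the off-diagonal Peirce components $F_{j,k}$ that are integrated out when passing from $\Omega$ to $\Omega_\epsb$.

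Part (4) is a direct verification: when $\vect s\in\epsb\C^r\cap\N_\Omega$, both $\Delta^{\vect s}$ and $\Delta^{\vect s}_\epsb$ are polynomial, and evaluating each on $t\cdot e_\epsb$ with $t\in T_-$ yields $\prod_j t_{j,j}^{2 s_j}$ (with the convention that the factor is $1$ when $\eps_j=0$); since $T_-\cdot e_\epsb=\Omega_\epsb$, the equality propagates to every point of the orbit by continuity. The main obstacle is the bookkeeping in (3): identifying the correct normalization $\Gamma_{\Omega_\epsb}(\epsb\vect s)$ and the exact exponents of the relative multipliers of $\nu_{\Omega_\epsb}$ and $\nu_{\Omega_\epsb^*}$ requires tracking the dimensions of the Peirce components $F_{j,k}$ for each pair with $\eps_j\eps_k$ nonzero and propagating these through the triangular action on both the normal subgroup of off-diagonal entries and on the diagonal torus. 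This computation has been carried out explicitly in \cite{Ishi} for general homogeneous cones and specializes cleanly to the symmetric case at hand.
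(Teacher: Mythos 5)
Your overall architecture---reduce the hard content of (1)--(3) to Ishi's work and give a direct verification of (4)---matches the paper, which derives (1)--(3) from \cite[Theorems 3.5 and 6.2]{Ishi} and proves (4) by approximating $h=t\cdot e_{\epsb}$ from inside $\Omega$ by the points $h_k=[t\sum_{j}(\eps_j+(1-\eps_j)2^{-k})e_j]\cdot e_\Omega$ and using continuity of the polynomial $\Delta^{\vect s}$; your argument for (4) is essentially this and is fine. However, the sketches you offer for (1) and (3) rest on statements that are false and would not survive being written out.

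For (1): $T_-$ does \emph{not} act transitively on Jordan frames, even modulo permutations. It acts simply transitively on $\Omega$, so its stabilizer of $e_\Omega$ is trivial, and for general $t\in T_-$ the elements $t\cdot e_j$ are not even idempotents, let alone a frame; transitivity on frames is a property of the compact automorphism group of $F$ (cf.\ \cite[Corollary IV.2.7]{FarautKoranyi}), not of $T_-$. Hence your reduction of an arbitrary spectral decomposition to the fixed frame does not go through. Likewise, disjointness is not immediate from a rank count: for $r=2$ the orbits $T_-\cdot e_1$ and $T_-\cdot e_2$ both consist of rank-one elements yet are distinct; the separating invariant is the full sequence of ranks of the projections $\pr_j(h)$, preserved by the lower-triangular action, and establishing the resulting Gauss/Bruhat-type stratification is precisely the content of Ishi's theorem. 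For (3): $\Omega_{\epsb}$ is \emph{not} contained in the Peirce subalgebra determined by $e_{\epsb}$. Already for $r=2$, $\epsb=(1,0)$, and real symmetric matrices, the subalgebra $\{x\in F\colon e_1x=x\}=\R e_1$ is one-dimensional, whereas $T_-\cdot e_1$ is the two-dimensional surface of matrices $\left(\begin{smallmatrix} a^2 & ab\\ ab & b^2\end{smallmatrix}\right)$ with $a>0$, $b\in \R$; the orbit only \emph{fibers} over the cone of the subalgebra, which is exactly why $\nu_{\Omega_\epsb}$ carries the nontrivial multiplier $\Delta^{(\vect 1_r-\epsb)\vect m^{(\epsb)}/2}$. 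Finally, the converse half of (2)---that no other real $\vect s$ yields a positive measure---is a genuine theorem, not a routine analytic-continuation remark. Since you ultimately appeal to \cite{Ishi} anyway, the cleanest course is the paper's: cite Ishi for (1)--(3) outright rather than supplying sketches whose key steps are incorrect.
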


The first three assertions follow from~\cite[Theorems 3.5 and 6.2]{Ishi}, while the last one follows observing that, given $h=t\cdot e_{\epsb}\in \Omega_\epsb$, the sequence $h_k\coloneqq [t \sum_{j=1}^r (\eps_j+ (1-\eps_j) 2^{-k} ) e_j]\cdot  e_\Omega $ converges to $h$ in $F$, and $\Delta^{\vect s}(h_k)=\Delta^{\vect s}(t)=\Delta_{ \epsb}^{\vect s}(h)$ for every $k\in\N$. The other half of (4) is proved similarly.

\subsection{Reproducing Kernel Hilbert Spaces of Holomorphic Functions}\label{sec:2:8}
Recall that a reproducing kernel Hilbert space (RKHS for short) of holomorphic functions on $D$ is a vector subspace $H$ of $\Hol(D)$ endowed with the structure of a Hilbert space for which the canonical inclusion $H\subseteq\Hol(D)$ is continuous.
Then, for every $(\zeta,z)\in D$ there is $\Kc_{(\zeta,z)}\in H$ such that 
\[
f(\zeta,z)=\langle f \vert \Kc_{(\zeta,z)}\rangle
\]
for every $f\in H$ and for every $(\zeta,z)\in D$. The sesquiholomorphic function
\[
\Kc\colon ((\zeta,z),(\zeta',z'))\mapsto \Kc_{(\zeta',z')}(\zeta,z)
\]
is called the reproducing kernel of $H$. Observe that the $\Kc_{(\zeta,z)}$, as $(\zeta,z)$ run through $D$, form a total subset of $H$, and that the scalar product of $H$ is therefore completely determined by the relations
\[
\langle \Kc_{(\zeta,z)}\vert \Kc_{(\zeta',z')}\rangle=\Kc((\zeta,z),(\zeta',z'))
\]
for  $(\zeta,z),(\zeta',z')\in D$.

If, conversely, we are given a sesquiholomorphic mapping $\Kc'\colon D\times D\to \C$ such that
\[
\sum_{(\zeta,z),(\zeta',z')\in D} \alpha_{(\zeta,z)}\overline{\alpha_{(\zeta',z')}} \Kc'((\zeta,z),(\zeta',z'))\Meg 0
\]
for every $(\alpha_{(\zeta,z)})\in \C^D$ with finite support, in which case $\Kc'$ is said to be a positive kernel, then we may define a scalar product on the vector space $H'$ generated by the $\Kc'_{(\zeta,z)}=\Kc'(\,\cdot\,,(\zeta,z))$, $(\zeta,z)\in D$, so that
\[
\langle \Kc'_{(\zeta,z)}\vert \Kc'_{(\zeta',z')}\rangle_{H'}=\Kc'((\zeta,z),(\zeta',z'))
\]
for every $(\zeta,z),(\zeta',z')\in D$. Then, $H'$ embeds continuously into $\Hol(D)$ and its completion, canonically identified with a vector subspace of $\Hol(D)$, is a RKHS.

We conclude this subsection observing that, given $H$ and $\Kc$ as above, an automorphism $U$ of $\Hol(D)$ induces a unitary automorphism of $H$ if and only if $(U\otimes \overline U) \Kc=\Kc$ (where $U\otimes \overline U$ or, more precisely, $U\widehat \otimes \overline U$, is defined identifying the space of sesquiholomorphic functions on $D\times D$ with $\Hol(D)\widehat \otimes \overline{\Hol(D)}$).

\subsection{Weighted Bergman Spaces}\label{sec:Bergman}

We now briely review some basic facts on weighted Bergman spaces which are  related to the following discussion. Cf.~\cite{CalziPeloso} for a more thorough discussion of these spaces.

\begin{deff}\label{def:5}
	Take  $\vect s\succ \frac{n+m}{r}\vect 1_r+\frac 1 2 \vect m $. Then, we define 
	\[
	A^{2}_{\vect s}(D)\coloneqq \Set{f\in \Hol(D)\colon \int_{\Omega} \abs{f(\zeta,z)}^2 \Delta^{\vect s-p\vect 1_r}(\Im z-\Phi(\zeta))\,\dd (\zeta,z)<\infty   },
	\]
	endowed with the corresponding norm, where $p=(n+2m )/r$ is the genus of $D$. 
	
	In addition, we define
	\[
	B^{\vect s}_{(\zeta',z')}\colon D\ni (\zeta,z)\mapsto\Delta^{\vect s}\left(\frac{z-\overline{z'}}{2 i}-\Phi(\zeta,\zeta')  \right)\in \C
	\]
	for every $(\zeta',z')\in D$. 
\end{deff}

One may also define corresponding spaces $L^2_{\vect s}(D)$ of measurable functions.

We observe that  $A^{2}_{\vect s}(D)$ is a non-trivial RKHS, and its reproducing kernel is (cf., e.g.,~\cite[Proposition 3.11]{CalziPeloso})
\[
((\zeta,z),(\zeta',z'))\mapsto c_{\vect s} B^{-\vect s }_{(\zeta',z')}(\zeta,z)
\]
for a suitable constant $c_{\vect s}\neq 0$. The case $\vect s=p \vect 1_r$ is of particular importance, since  $A^2_{p\vect 1_r}(D)$ is the unweighted Bergman space, so that its reproducing kernel satisfies remarkable invariance properties. Namely, (cf., e.g.,~\cite[Proposition 1.4.12]{Krantz})
\begin{equation}\label{eq:2b}
 B^{-p\vect 1_r}_{(\zeta',z')}(\zeta,z)=(J\phi)(\zeta,z) \overline{(J\phi)(\zeta',z')} B^{-p\vect 1_r}_{\phi(\zeta',z')}(\phi(\zeta,z))
\end{equation}
for every $(\zeta,z),(\zeta',z')\in D$ and for every $\phi\in G(D)$.

We then denote by $P_{\vect s}$ the  Bergman projector associated with $A^2_{\vect s}(D)$, that is, the orthogonal projector of  $L^2_{\vect s}(D)$ onto $ A^2_{\vect s}(D)$, so that
\[
P_{\vect s} f(\zeta,z)= c_{\vect s}\int_D f(\zeta',z') B^{-\vect s}_{(\zeta',z')}(\zeta,z)\Delta^{ \vect s-p\vect 1_r}(\Im z-\Phi(\zeta))\,\dd (\zeta,z)
\]
for (say) every $f\in C_c(D)$ and for every $(\zeta,z)\in D$.

\begin{deff}
	Take $\vect s\succ\frac{1}{2}\vect m'$. Then, we denote by $\widetilde A^2_{\vect s}(D)$ the unique complete normable space of holomorphic functions on $D$ such that $P_{\vect s'}$ induces a continuous linear mapping of $L^2_{\vect s}(D)$ \emph{onto} $\widetilde A^2_{\vect s}(D)$ for every $\vect s'\succ\frac{n+m}{r}\vect 1_r+ \frac{1}{2}\vect m$ such that $ \vect s'-\frac 1 2 \vect s\succ \frac 1 2\vect m'$ (cf., e.g.,~\cite[Proposition 2.4 and Theorem 4.5]{Paralipomena}).  
\end{deff}

Then, the mapping $f\mapsto f* I^{-\vect s'}$ induces an isomorphism of $\widetilde A^{2}_{\vect s}(D)$ onto $\widetilde A^{2}_{\vect s+\vect s'}(D)$ for every $\vect s'\in \N_\Omega^*$ (cf.~\cite[Proposition 5.13]{CalziPeloso}), and $B^{-\vect s}_{(\zeta,z)}\in \widetilde A^2_{\vect s}(D)$ for every $(\zeta,z)\in D$ (cf.~\cite[Lemma 5.15]{CalziPeloso}). By means of Lemma~\ref{lem:8} we then see that the topology of $\widetilde A^2_{\vect s}(D)$ may be defined by a Hilbert norm with respect to which the reproducing kernel of $\widetilde A^2_{\vect s}(D)$ is $B^{-\vect s}$.

\subsection{Groups of Automorphisms}

\begin{deff}
	We denote by $G(\Omega)$ the group of linear automorphisms of $\Omega$, and by $G_0(\Omega)$ its identity component. 
	
	We denote by $GL(D)$, $\Aff(D)$, and $G(D)$ the groups of linear, affine, and holomorphic automorphisms of $D$, respectively, and by $GL_0(D)$, $\Aff_0(D)$, and $G_0(D)$ their identity components. We simply write $\Aff, \Aff_0, G, G_0$ if there is no fear of confusion.
\end{deff}

Observe that~\cite[p.~14--15]{Kaneyuki} shows that there is a triangular subgroup $T_-'$ of $GL(D)$ which acts simply transitively on $\Omega$. In addition, the canonical mapping $T'_-\ni A\times B_\C\mapsto B\in G(\Omega)$ is an isomorphism onto its image, which is a triangular subgroup of $G(\Omega)$ acting simply transitively on $\Omega$. By~\cite{Vinberg2}, we may then assume that the action of the triangular group $T_-$ constructed in Subsection~\ref{sec:2:7}  induces the group $\Set{B\colon A\times B_\C\in T_-'}$. 
In particular, $T_-$ acts on the left on $E$ in such a way that $t\cdot \Phi(\zeta)=\Phi(t\cdot \zeta)$ for every $t\in T_-$ and $\zeta\in E$. In addition, the semi-direct product $G_T=\Nc \rtimes T_-$ acts simply transitively on $D$ (cf.~\cite[Proposition 2.1]{Murakami}).

\begin{lem}\label{lem:3}
	The group  $G_T$ is solvable, hence amenable. In addition, its characters are the mappings $\Nc \rtimes T_-\ni ((\zeta,x), t)\mapsto \Delta^{\vect s}(t)$, $\vect s\in \C^r$.
\end{lem}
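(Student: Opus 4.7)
The plan is to split the lemma into two essentially independent assertions: solvability (which yields amenability for free), and the explicit description of the characters. Neither part should be hard; the mild obstacle is identifying the derived subgroup of $G_T$ well enough to reduce to characters of $T_-$, which have already been listed just before the statement.

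For solvability, I would note that $\Nc$ is $2$-step nilpotent by construction (its centre is $\{0\}\times F$ and its product is as displayed in Subsection~\ref{sec:1:1}), hence solvable, while $T_-$ is a group of lower-triangular matrices by Examples~\ref{ex:5} and~\ref{ex:6}, hence also solvable. Since $\Nc$ is normal in $G_T$ and the quotient $G_T/\Nc$ is isomorphic to $T_-$, the semidirect product $G_T=\Nc\rtimes T_-$ is solvable. Amenability then follows from the standard fact that connected solvable Lie groups are amenable.

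For the characters, a continuous homomorphism $\chi\colon G_T\to\C^*$ must be trivial on the closure of the derived subgroup $[G_T,G_T]$, so the key point is to show $\Nc\subseteq [G_T,G_T]$. I would argue in two steps. First, a direct computation using the multiplication rule in $\Nc$ (and $\Phi(\zeta,\zeta)\in F$) gives $[(\zeta,0),(\zeta',0)]=(0,4\Im\Phi(\zeta,\zeta'))$; since $\Im\Phi(\zeta,\zeta')=\Re\Phi(-i\zeta,\zeta')$ and $\Phi$ is non-degenerate, the vectors $\Im\Phi(\zeta,\zeta')$ span $F$, so $\{0\}\times F\subseteq[\Nc,\Nc]$. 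Second, $T_-$ contains the positive scalars, which act on $\Nc/([\Nc,\Nc])\simeq E$ as non-trivial scalar multiplications (e.g.\ $t=2 I$ acts as multiplication by $2$ on $E$); hence for every $\zeta\in E$ the commutator $[t,(\zeta,0)]$ lies in $(\zeta,F)\subseteq[G_T,G_T]$. Combining the two observations yields $\Nc\subseteq[G_T,G_T]$.

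Once this is established, $\chi$ factors through $G_T/\Nc\simeq T_-$, and the conclusion is immediate from the explicit description recorded immediately before Definition~\ref{def:1}, namely that the characters of $T_-$ are precisely the maps $t\mapsto\Delta^{\vect s}(t)=\prod_{j=1}^r t_{j,j}^{2 s_j}$, $\vect s\in\C^r$. I expect the only non-cosmetic step to be the verification that $\Im\Phi(E,E)$ spans $F$ and the quick check that some element of $T_-$ acts on $E$ without fixed nonzero vectors; both reduce to the presence of positive scalars inside $T_-$ and the non-degeneracy of $\Phi$, so neither should require more than a line in the final write-up.
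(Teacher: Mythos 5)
Your solvability/amenability argument and your reduction of the character problem to showing $\Nc\subseteq[G_T,G_T]$ coincide with the paper's. The gap is in your Step~1: the claim that non-degeneracy of $\Phi$ forces the vectors $\Im\Phi(\zeta,\zeta')$ to span $F$ is not justified and is in fact false in the most important case. When $D$ is a tube domain one has $E=\Set{0}$, so $\Im\Phi(E,E)=\Set{0}$ while $\Nc=F$ is abelian; your Step~1 then produces nothing, and Step~2 (which only handles the $E$-component) cannot recover $\Set{0}\times F\subseteq[G_T,G_T]$. Even for $n>0$, non-degeneracy of $\Phi$ (i.e.\ injectivity of $\zeta\mapsto\Phi(\zeta,\cdot\,)$) says nothing about the linear span of the image: one can write down non-degenerate $\overline\Omega$-positive hermitian maps whose image spans a proper subspace of $F_\C$. (The span \emph{is} all of $F_\C$ for irreducible \emph{symmetric} domains with $n>0$, but that uses the Jordan-theoretic compatibility of $\Phi$ with the simple algebra $F$, not mere non-degeneracy, and you would have to prove it.)

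The repair is cheap and is exactly what the paper does: instead of trying to generate $\Set{0}\times F$ inside $[\Nc,\Nc]$, take commutators with a dilation. Choosing $t\in T_-$ with $t\cdot(\zeta',z')=(2\zeta',4z')$, one computes
\[
[(\zeta,x),t]=\bigl((e-t)\cdot\zeta,\ (e-t)\cdot x-2\Im\Phi(\zeta,t\cdot\zeta)\bigr)=(-\zeta,-3x),
\]
since $\Im\Phi(\zeta,2\zeta)=0$; this yields all of $\Nc$ in one stroke, with no hypothesis on the span of $\Phi(E\times E)$. With that substitution your argument is correct and agrees with the paper's; the final identification of the characters of $T_-$ with the $\Delta^{\vect s}$ is, as you say, already recorded before Definition~\ref{def:1}.
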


Recall that a group $\Gc$ is said to be amenable if there is a right-invariant mean $\mathtt{m}$ on $\ell^\infty(\Gc)$, that is, a continuous linear functional such that $\mathtt{m}(\chi_\Gc)=1$ and $\mathtt{m}(f(\,\cdot\,g))=\mathtt{m}(f)$ for every $f\in \ell^\infty(\Gc)$. See, e.g.,~\cite{Pier} for more information on amenable groups.

\begin{proof}
	Observe that $G_T$ is solvable since it is the semi-direct product of the nilpotent group $\Nc$ and the solvable group $T_-$.  In particular, $G_T$ is  amenable thanks to~\cite[Corollary 13.5]{Pier}.  Since the $\Delta^{\vect s}$, $\vect s\in \C^r$, are precisely the characters of $T_-$, in order to complete the proof it will suffice to prove that $\Nc\subseteq[G_T,G_T]$. To see that, observe that
	\[
	[(\zeta,x), t]=(\zeta,x)(-t\cdot\zeta,-t\cdot x)=((e-t)\cdot\zeta,(e-t)\cdot x-2 \Im \Phi(\zeta,t\cdot\zeta))
	\]
	for every $(\zeta,x)\in \Nc$. Choosing $ t$ so that $t\cdot (\zeta',z')=(2\zeta',4z')$ for every $(\zeta',z')\in D$, we then see that $\Nc\subseteq [G_T,G_T]$, whence the result.	
\end{proof}

\begin{lem}\label{lem:6}
	Every \emph{positive} character of $\Aff$ (or $\Aff_0$) is uniquely determined by its restriction to $G_T$ (hence to $T_-$). 
	In addition, $\Delta^{\vect s}$ extends to a \emph{positive} character of $\Aff$ (or $\Aff_0$) if and only if $\vect s\in \R \vect d=\R \vect 1_r$, and
	\[
	\Delta^{\lambda \vect 1_r}(\phi)=\abs{{\det}_\C \phi'(0,0)}^{2\lambda/p}
	\]
	for every $\phi\in \Aff$ and for every $\lambda\in \R$, where $p\coloneqq (n+ 2m)/r$ is the genus of $D$.
\end{lem}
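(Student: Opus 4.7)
Since $\R_+^*$ contains no nontrivial compact subgroup, every positive character $\chi$ of $\Aff$ (resp.\ $\Aff_0$) vanishes on every compact subgroup of its domain, in particular on the isotropy group $K$ of the base point $(0, i e_\Omega) \in D$. The latter is compact because $D$ is biholomorphic to a bounded symmetric domain, and the full $G$-isotropy of any point in a bounded domain is compact (by, e.g., H.~Cartan's theorem). Now $G_T$ acts simply transitively on $D$, while $\Aff$ (resp.\ $\Aff_0$) acts transitively on $D$; hence every $\phi \in \Aff$ factors uniquely as $\phi = g k$ with $g \in G_T$ and $k \in K$, so $\chi(\phi) = \chi(g)$. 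Combined with the inclusion $\Nc \subseteq [G_T, G_T]$ from Lemma~\ref{lem:3}, this shows that $\chi$ is uniquely determined by its restriction to $T_-$, proving the first assertion.

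For the second assertion, I would first exhibit the candidate extension. Set $\chi_\lambda(\phi) \coloneqq \abs{\det_\C \phi'(0, 0)}^{2\lambda/p}$ for $\phi \in \Aff$ and $\lambda \in \R$. Since $\phi$ is affine, $J \phi \coloneqq \det_\C \phi'$ is constant, and the chain rule $J(\phi \psi)(z) = J\phi(\psi(z)) J\psi(z)$ collapses to $J(\phi \psi) = J\phi \cdot J\psi$; thus $\chi_\lambda$ is a positive character of $\Aff$. To verify $\chi_\lambda|_{T_-} = \Delta^{\lambda \vect 1_r}$ (under the identification of $T_-$ with its lift $T'_- \subseteq GL(D)$), I would apply~\eqref{eq:2b} to $\phi = t \in T'_-$ at $(\zeta, z) = (\zeta', z') = (0, i e_\Omega)$. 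The left-hand side equals $\Delta^{-p\vect 1_r}(e_\Omega) = 1$, while the right-hand side equals $\abs{J t}^2 \Delta^{-p\vect 1_r}(t \cdot e_\Omega) = \abs{J t}^2 \prod_{j=1}^r t_{j,j}^{-2p}$. This yields $\abs{J t} = \prod_{j=1}^r t_{j,j}^{p}$, and hence $\chi_\lambda(t) = \prod_{j=1}^r t_{j,j}^{2\lambda} = \Delta^{\lambda \vect 1_r}(t)$.

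Finally, for the only-if direction, suppose $\Delta^{\vect s}$ extends to a positive character $\chi$ of $\Aff_0$ (the case of $\Aff$ follows by restriction). For every permutation $\pi$ of $\{1, \ldots, r\}$ there is an element $w_\pi$ in the isotropy $K_0$ of $e_\Omega$ in $G_0(\Omega)$ with $w_\pi e_j = e_{\pi(j)}$ for all $j$; by the symmetric structure of $D$, $w_\pi$ lifts to an element $\tilde w_\pi \in GL_0(D)$ fixing $(0, i e_\Omega)$, hence $\tilde w_\pi \in K$ and $\chi(\tilde w_\pi) = 1$. Conjugation by $\tilde w_\pi$ stabilizes the diagonal subgroup of $T_-$ and permutes its diagonal entries according to $\pi$, so the equality $\chi(a) = \chi(\tilde w_\pi a \tilde w_\pi^{-1})$ for diagonal $a \in T_-$ forces $s_j = s_{\pi(j)}$ for every $j$. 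Ranging over all $\pi$ yields $\vect s \in \R \vect 1_r$. The main obstacle is establishing the existence of the lifts $\tilde w_\pi$ to $GL_0(D)$, which relies on the structure theory of symmetric Siegel domains, namely the compatibility of the $E$-component of $GL(D)$ with the Weyl group action on the Jordan frame of $F$.
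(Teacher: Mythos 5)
Your proof is correct and follows essentially the same route as the paper's: compactness of the stabilizer of $(0,ie_\Omega)$ in $\Aff$ plus the simple transitivity of $G_T$ for the first assertion, and the permutation elements $k_\tau\in K_0$ of~\cite[Corollary IV.2.7]{FarautKoranyi} for the necessity of $\vect s\in\R\vect 1_r$. Two execution differences are worth recording. For the identity $\chi_\lambda|_{T_-}=\Delta^{\lambda\vect 1_r}$ the paper does less work than you: having already established that any positive character of $\Aff$ restricts to $T_-$ as $\Delta^{\xi\vect 1_r}$ for some $\xi\in\R$, it evaluates on the single dilation $(\zeta,z)\mapsto(2\zeta,4z)$ to get $\xi=\lambda$; your direct computation of $\abs{Jt}=\prod_j t_{j,j}^{p}$ from~\eqref{eq:2b} is a fine self-contained alternative. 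As for the obstacle you flag in the only-if direction: the paper does not lift $w_\pi$ to $GL_0(D)$ at all, but goes the opposite way, citing~\cite[Proposition 4.1 of Chapter V]{Satake} to push the positive character of $\Aff_0$ down to a positive character of $G_0(\Omega)$ extending $\Delta^{\vect s}$ (the kernel of $GL_0(D)\to G_0(\Omega)$ is compact, so a positive character is trivial on it), and then simply evaluates the resulting $K_0$-invariant function $\Delta^{\vect s}$ at the points $\sum_j\alpha_j e_j\in\Omega$. If you insist on lifting, the same Satake result supplies the required surjectivity (this is also how the paper proves Proposition~\ref{prop:15}(1$'$)), but your conjugation step then needs one extra observation: $\tilde w_\pi a\tilde w_\pi^{-1}$ and the canonical lift in $T_-'$ of the permuted diagonal element share the same $F$-component but may have different $E$-components, so they agree only modulo the compact kernel of $GL_0(D)\to G_0(\Omega)$; since $\chi$ is trivial there, the argument survives, but the push-down formulation avoids the issue entirely.
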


\begin{proof}
	Since $G_T$ acts simply transitively on $D$, it is clear that $\Aff=G_T K_\Aff=K_\Aff G_T$, where $K_\Aff$ denotes the stabilizer of $(0, i e_\Omega)$ in $\Aff$. Since $K_\Aff$ is compact (and contained in $GL(D)$, cf.~\cite[Theorem 1.13]{Kaneyuki}), the first assertion follows. 
	
	Next, assume that $\Delta^{\vect s}$ extends to a character of $\Aff_0$.   Then, $\Delta^{\vect s}$ extends to a character of $G_0(\Omega)$ thanks to~\cite[Proposition 4.1 of Chapter V]{Satake}. Consequently, the function $\Delta^{\vect s}$ on $\Omega$ is $K_0$-invariant. Now, by~\cite[Corollary IV.2.7]{FarautKoranyi}, for every permutation $\tau$ of   $\Set{1,\dots,r}$ there is $k_\tau\in K_0$ such that $k_\tau(e_j)=e_{\tau(j)}$ for every $j=1,\dots,r$, so that $\prod_j \alpha_j^{s_j}=\Delta^{\vect s}(\sum_j \alpha_j e_j)= \Delta^{\vect s}(k_\tau\sum_j \alpha_j e_j) =\prod_j \alpha_j^{s_{\tau(j)}}$ for every $\alpha_1,\dots,\alpha_r>0$. Therefore, $\vect s=(s_{\tau(1)},\dots,s_{\tau(r)})$ for every $\tau$, so that $\vect s\in \R\vect 1_r=\R\vect d$. 
	
	Finally, observe that for every $\lambda\in \R$  the mapping
	\[
	\chi\colon \phi\mapsto\abs{{\det}_\C \phi'(0,0)}^{2\lambda/p} 
	\]
	is a well-defined \emph{positive} character of $\Aff$, so that there is $\xi\in \R$ such that $\chi(t\,\cdot\,)=\Delta^{\xi\vect 1_r}(t)$ for every $t\in T_-$, thanks to the previous remarks. Choosing $t=4 e_\Omega$, $\alpha>0$, so that $t\cdot (\zeta,z)=(  2 \zeta, 4 z)$, we then see that $\xi=\lambda$.
\end{proof}

\begin{prop}\label{prop:15}
	The following hold:
	\begin{enumerate}
		\item[\textnormal{(1)}] identifying $T_\Omega=F+i\Omega$ with $\Set{(\zeta,z)\in D\colon \zeta=0}$, the set $G'\coloneqq \Set{g\in G\colon g(T_\Omega)=T_\Omega}$ is a closed subgroup of $G$ and the image of the canonical mapping $G'\to G(T_\Omega)$ contains $G_0(T_\Omega)$;
		
		\item[\textnormal{(1$'$)}]  the set $\Aff'\coloneqq \Set{g\in \Aff\colon g(T_\Omega)=T_\Omega}$ is a closed subgroup of $\Aff$ and the image of the canonical mapping $\Aff'\to \Aff(T_\Omega)$ contains $\Aff_0(T_\Omega)$;
		
		\item[\textnormal{(2)}] there is a $\C$-linear mapping $\phi\colon F_\C\to \Lc(E)$ such that  $\phi(T_\Omega)\subseteq\Aut(E)$, such that
		\[
		\iota\colon D\ni (\zeta,z)\mapsto (-i\phi(z)^{-1}\zeta,-z^{-1})\in D
		\]
		is a well-defined involution of $D$ with $(0,i e_\Omega)$ as its  unique fixed point, and such that $G$ and $G_0$ are generated by $\iota$ and $\Aff$ and $\Aff_0$, respectively;
		
		\item[\textnormal{(3)}]  $\det_\C \iota'(\zeta,z)= i^{-n}\Delta^{-p\vect 1_r}(z)$ for every $(\zeta,z)\in D$, where $p=(n+2m)/r$ is the genus of $D$.
	\end{enumerate}
\end{prop}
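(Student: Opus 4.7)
The statement splits into the structural assertions (1), (1$'$) and the explicit construction in (2), (3).

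\textbf{Closedness and identity components.} Since $T_\Omega$ is the fiber at $0$ of the holomorphic projection $D\to E$, it is a closed submanifold of $D$; hence the stabilizers $G'$ and $\Aff'$ are closed in $G$ and $\Aff$. For the image statements I would check that the generators of $\Aff_0(T_\Omega)$ all lift: the translations $z\mapsto z+x$, $x\in F$, extend trivially by $(\zeta,z)\mapsto(\zeta,z+x)$, and the description of $GL(D)$ in Subsection~\ref{sec:1:1} combined with the symmetry hypothesis on $D$ (cf.~\cite{Satake, Murakami}) shows that the projection $A\times B_\C\mapsto B$ has image containing $G_0(\Omega)$. This gives (1$'$) together with $\Aff_0(T_\Omega)\subseteq\Aff'\vert_{T_\Omega}$. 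For (1) it then suffices to lift the single biholomorphism $z\mapsto -z^{-1}$, which together with $\Aff_0(T_\Omega)$ generates $G_0(T_\Omega)$ by the standard symmetric-space argument; that lift is the $\iota$ produced in (2).

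\textbf{Construction of $\iota$.} This is the main obstacle. Using the Jordan triple system attached to a symmetric Siegel domain (cf.~\cite{Loos}), I would extract a $\C$-linear map $\phi\colon F_\C\to\Lc(E)$ characterized by the polarization identity
\[
\Phi(\phi(w)\zeta,\zeta')=w\,\Phi(\zeta,\zeta')\qquad(w\in F_\C,\ \zeta,\zeta'\in E),
\]
with the product on the right being the Jordan product in $F_\C$. Evaluating at $w=e_\Omega$ gives $\phi(e_\Omega)=\mathrm{id}_E$, and equivariance of $\phi$ under the $T_-$-action on $E$ and $F$ propagates invertibility of $\phi(w)$ to all of $\Omega$ and then to $T_\Omega=F+i\Omega$ (using that elements of $T_\Omega$ are invertible in $F_\C$, as $z^{-1}\in T_\Omega$ whenever $z\in T_\Omega$). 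I would then verify by direct computation, applying the polarization identity twice, that $\iota$ sends $D$ into $D$ and is involutive. For the fixed-point statement, $-z^{-1}=z$ forces $z^2=-e_\Omega$, whose only solution in $T_\Omega$ is $z=ie_\Omega$ (decomposing $z$ in the frame); then $\phi(ie_\Omega)=i\,\mathrm{id}_E$ forces $\zeta=-\zeta$, i.e.\ $\zeta=0$. Finally, the transitivity of $\Aff$ on $D$ combined with the fact that $\iota$ is the symmetry at $(0,ie_\Omega)$ yields $\langle\iota,\Aff\rangle=G$ and $\langle\iota,\Aff_0\rangle=G_0$ by a standard symmetric-space argument (the Bruhat-like decomposition $G=\Aff\,\iota\,\Aff\cup\Aff$).

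\textbf{Computation of the Jacobian.} The differential $\iota'(\zeta,z)$ is block triangular with diagonal blocks $-i\phi(z)^{-1}$ on $E$ and $\partial_z(-z^{-1})$ on $F_\C$, so
\[
{\det}_\C \iota'(\zeta,z)=(-i)^n\,({\det}_\C\phi(z))^{-1}\cdot{\det}_\C\partial_z(-z^{-1}).
\]
The second factor is $\det_\C P(z)^{-1}$, where $P$ is the quadratic representation; by a standard Jordan-algebra calculation (cf.~\cite[Chapter III]{FarautKoranyi}) and the fact that the generic norm of $F_\C$ is $\Delta^{\vect 1_r}$, one has $\det_\C P(z)=\Delta^{(2m/r)\vect 1_r}(z)$. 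For the first factor, $z\mapsto\det_\C\phi(z)$ is a positive character of $T_-$ when restricted to $\Omega$ via $z=t\cdot e_\Omega$, hence of the form $\Delta^{c\vect 1_r}(z)$ by the classification of characters of $T_-$ recalled after Example~\ref{ex:6}; the exponent $c=n/r$ is pinned down either by a dimension count using $\phi(e_\Omega)=\mathrm{id}_E$ or by testing on the scaling element of $T_-$ that doubles $E$ and quadruples $F$. Putting the two contributions together and using $(-i)^n=i^{-n}$ together with $(n/r)+2(m/r)=p$ gives $\det_\C \iota'(\zeta,z)=i^{-n}\Delta^{-p\vect 1_r}(z)$.
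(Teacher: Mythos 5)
Your outline for parts (1) and (1$'$) and your Jacobian computation in (3) are workable, but part (2) — which is where the real content of the proposition lies — has a genuine gap. The paper does not reprove the existence of $\phi$ or the fact that $\iota$ is an involution of $D$ with $(0,ie_\Omega)$ as unique fixed point: it quotes Dorfmeister (Corollary 3.6, Theorems 3.9 and 6.1), and obtains the generation statement from the explicit decomposition $G=\Nc\,(\iota\Nc\iota)\,GL(D)\,\Nc$. Your sketch defers exactly this content to ``direct computation'', and the identity you propose as characterizing $\phi$, namely $\Phi(\phi(w)\zeta,\zeta')=w\,\Phi(\zeta,\zeta')$ with the Jordan product of $F_\C$ on the right, cannot hold in general: iterating it gives $\Phi(\phi(w)^2\zeta,\zeta')=L(w)^2\Phi(\zeta,\zeta')$, while $\phi(w)^2=\phi(w^2)$ for a Jordan representation gives $L(w^2)\Phi(\zeta,\zeta')$, and $L(w)^2\neq L(w^2)$ unless $F$ is associative. (Concretely, for type $I_{p,q}$ domains one has $\Phi(\phi(w)\zeta,\zeta')=w\zeta\zeta'^*$, i.e.\ the \emph{associative} product; the correct condition in Satake/Dorfmeister is the symmetrized one, $\Phi(\phi(u)\zeta,\zeta')+\Phi(\zeta,\phi(u)\zeta')=2u\,\Phi(\zeta,\zeta')$ for $u\in F$.) With the wrong identity the verification that $\iota(D)\subseteq D$ — the hard step — does not go through as written. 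Separately, the claimed two-cell decomposition $G=\Aff\,\iota\,\Aff\cup\Aff$ is doubtful for $r\Meg 2$ ($\Aff$ is a maximal parabolic and the double coset space is generally larger); generation is better obtained either from Dorfmeister's four-factor decomposition or by noting that $\Ad(\iota)$ carries $\gf_{-1}\oplus\gf_{-1/2}$ onto $\gf_{1/2}\oplus\gf_1$, so that $\Aff_0$ and $\iota$ generate an open, hence all of $G_0$; for the full group $G$ one also needs $G=G_0\Aff$.

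Two further points. In (3) your route is genuinely different from the paper's, which derives a functional equation for $J\iota$ from the invariance of the Bergman kernel~\eqref{eq:2b} and only needs $\phi(e_\Omega)=\mathrm{id}_E$; your block-triangular computation can be made to work, but the step identifying $\det_\C\phi(z)$ is incomplete: the classification of characters of $T_-$ recalled after Example~\ref{ex:6} only yields $\det_\C\phi(t\cdot e_\Omega)=\Delta^{\vect s}(t)$ for \emph{some} $\vect s\in\C^r$, and to conclude $\vect s\in\R\vect 1_r$ you need the $K_0$-invariance argument of Lemma~\ref{lem:6} (after which the scaling element does pin down the exponent $n/r$). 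Finally, for (1) and (1$'$) the paper argues through the graded Lie algebra $(\gf_\lambda)$ and Nakajima's identification of $\mathrm{Lie}(G(T_\Omega))$ with $\gf_{-1}\oplus[\gf_{-1},\gf_1]\oplus\gf_1$, which yields (1) by surjectivity of the differential without first constructing $\iota$; your generator-lifting argument for (1$'$) is essentially the alternative the paper itself mentions via Satake, but your proof of (1) becomes contingent on repairing (2).
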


\begin{proof}
	It is known that the Lie algebra $\gf$ of $G$ may be endowed with a canonical graduation $(\gf_\lambda)_{\lambda\in \R}$, with $\gf_\lambda=\Set{0}$ if $\lambda\not \in \Set{-1,-1/2,0,1/2,1}$, such that the following hold:
	\begin{itemize}
		\item $\gf_{-1}$ is the Lie algebra of the (closed) subgroup $F\subseteq \Nc$ of $G_0$, acting by translations;
		
		\item $\gf_{-1}\oplus \gf_{-1/2}$ is the Lie algebra of the (closed) subgroup $\Nc$ of $G_0$, acting by translations;
		
		\item $\gf_0$ is the Lie algebra of the (closed) subgroup $GL(D)$ of $G$;
		
		\item $\gf_{-1}\oplus \gf_0\oplus \gf_1$ is the Lie algebra of $G'$.
	\end{itemize}
	See~\cite[Proposition 6.1, Theorem 6.3, Theorem 7.1 and its Corollary]{Murakami} for a proof of the preceding assertions.
	
	(1) By~\cite[Proposition 4.5]{Nakajima2}, $\gf_{-1}\oplus [\gf_{-1},\gf_1]\oplus \gf_1\subseteq \gf_{-1}\oplus \gf_0\oplus \gf_1$ is canonically identified with the Lie algebra of $G(T_\Omega)$. Since the differential of the canonical mapping $\pi\colon G'\to G(T_\Omega)$ is therefore onto, it is clear that the image of $\pi$ is an open subgroup of $G(T_\Omega)$, hence contains $G_0(T_\Omega)$.
	
	(1$'$) The proof is similar to that of (1), since $\gf_{-1}\oplus [\gf_{-1},\gf_1]$ is then canonically identified with the Lie algebra of $\Aff(T_\Omega)$, while $\gf_{-1}\oplus \gf_0$ is canonically identified with the Lie algebra of $\Aff'$. Alternatively, one may apply~\cite[Proposition 4.1 of Chapter V]{Satake}.
	
	(2) The existence of $\phi$ and the fact that $\iota$ is a well-defined involution of $D$ with $(0,i e_\Omega)$ as its unique fixed point follow from~\cite[Corollary 3.6]{Dorfmeister}.
	Then, observe that $\exp_G(\gf_{1/2}\oplus \gf_1)=\iota \Nc \iota$, thanks to~\cite[Theorem 3.9]{Dorfmeister} (observe that $\iota \Nc \iota$ is a connected, simply-connected closed nilpotent subgroup of $G_0$). Then,~\cite[Theorem 6.1]{Dorfmeister} implies that $G=\Nc (\iota \Nc \iota) GL(D) \Nc$, so that $G$ is the group generated by $\Aff$ and $\iota$. In addition, observe that $\iota\in G_0$ (cf.~\cite[Theorem 3.5]{Dorfmeister}), and that $\exp_G(\gf_{-1}\oplus\gf_{-1/2}\oplus\gf_0)\subseteq \Aff_0$ while $\exp_G(\gf_{1/2}\oplus \gf_1)= \iota \Nc \iota$, so that $G_0$ is contained in the group generated by $\Aff_0$ and $\iota$, which is necessarily contained in $G_0$. Then, $G_0$ is generated by $\Aff_0$ and $\iota$.
	
	(3) Observe that there is a constant $c\neq 0$ such that $((\zeta,z),(\zeta',z'))\mapsto c B^{-p\vect 1_r}_{(\zeta',z')}(\zeta,z)$ is the unweighted Bergman kernel (cf., e.g.,~\cite[Proposition 3.11]{CalziPeloso}).
	Setting $J\iota=\det_\C \iota'$, using the invariance properties of the unweighted Bergman kernel~\eqref{eq:2b}, we see that 
	\[
	\begin{split}
		\Delta^{-p\vect 1_r}\Big(\frac{-z^{-1}+i e_\Omega}{2 i}\Big)(J\iota)(\zeta,z) \overline{(J\iota)(0, i e_\Omega)}&=B_{(0, i e_\Omega)}(\iota(\zeta,z)) (J\iota)(\zeta,z) \overline{(J\iota)(0, ie_\Omega)}\\
		&= B_{(0, i e_\Omega)}(\zeta,z)\\
		&=\Delta^{-p\vect 1_r}\Big(\frac{z+i e_\Omega}{2 i}\Big)
	\end{split}
	\]
	for every $(\zeta,z)\in D$. Then, observe that
	\[
	(J\iota)(0, i e_\Omega)=(-1)^n ({\det}_\C\phi(e_\Omega))^{-1} J[z\mapsto -z^{-1}](i e_\Omega)=(-1)^n\Delta^{-2 m/r}(i e_\Omega)=(-1)^{n+m} 
	\]
	by~\cite[p.~341]{FarautKoranyi}, since $\phi(e_\Omega)$ is the identity by~\cite[formula (1.12)]{Dorfmeister}. In addition, observe that
	\[
	\Delta^{-p\vect 1_r}(z_1 z_2)=\Delta^{-p\vect 1_r}(z_1)\Delta^{-p\vect 1_r}(z_2)
	\]
	for every $z_1,z_2\in \C[u]$ and for every $u\in F_\C$  (use~\cite[Proposition II.2.2]{FarautKoranyi}). Then,
	\[
	(J\iota)(\zeta,z)=(-1)^{n+m}\Delta^{-p\vect 1_r}((z+i e_\Omega)  (-z^{-1}+i e_\Omega)^{-1} )=(-1)^{n+m }\Delta^{-p\vect 1_r}(z/i )=(-1)^n i^{n} \Delta^{-p\vect 1_r}(z)
	\]
	for every $(\zeta,z)\in D$, whence the result.
\end{proof}

\subsection{Fourier Analysis on $\Nc$}\label{sec:Fourier}

Since $\Nc$ is a $2$-step nilpotent Lie group (even abelian, if $n=0$), its Fourier transform may be described thoroughly (cf., e.g.,~\cite{OV,Astengoetal} and also~\cite{PWS}).
Here we shall content ourselves with some basic facts which will be useful in the sequel. 

Define 
\[
\Lambda_+\coloneqq \Set{\lambda\in F\colon \forall \zeta\in E\setminus \Set{0}\:\: \langle \lambda, \Phi(\zeta)\rangle>0},
\]
so that $\Lambda_+$ is an open convex cone containing $\Omega$, and its closure is the polar of $\Phi(E)$ in $F$ (cf.~\cite[Proposition 2.5]{PWS}). 
Then, for every $\lambda\in \overline{\Lambda_+}$, there is a unique (up to unitary equivalence) irreducible continuous unitary representation $\pi_\lambda$ of $\Nc$ in some Hilbert space $\Hs_\lambda$  such that $\pi_\lambda(\zeta, x)=\ee^{-i \langle \lambda,x \rangle}$ for every $x\in F$ and for every $\zeta$ in the radical $\Rc_\lambda$ of the positive hermitian form $\langle \lambda, \Phi\rangle$ (cf.~\cite[Subsection 2.3]{PWS}). Notice that we still denote by $\langle\,\cdot\,,\,\cdot\,\rangle$   the $\C$-\emph{bilinear} extension of the scalar product of $F$ to $F_\C$, whereas $\langle \,\cdot\,\vert \,\cdot\,\rangle$ denotes the sesquilinear extension of the scalar product of $F$ to $F_\C$, that is, the scalar product of $F_\C$. In addition, $\Rc_\lambda=\Set{0}$ if (and only if) $\lambda\in \Lambda_+$. 

More explicitly, one may choose $\Hs_\lambda=\Hol(E\ominus\Rc_\lambda)\cap L^2(\nu_\lambda)$, where $E\ominus \Rc_\lambda$ denotes the orthogonal complement of $\Rc_\lambda$ in $E$ and $\nu_\lambda=\ee^{-2\langle \lambda, \Phi\rangle}\cdot \Hc^{2(n-d_\lambda)}$, where $d_\lambda=\dim_\C \Rc_\lambda$ and $\Hc^{2(n-d_\lambda)}$ denotes the $2(n-d_\lambda)$-dimensional Hausdorff measure (i.e., Lebesgue measure), and set
\[
\pi_\lambda(\zeta+\zeta',x) \psi(\omega)\coloneqq \ee^{\langle \lambda, 2 \Phi(\omega,\zeta)  -\Phi(\zeta) -i x \rangle} \psi(\omega-\zeta)
\]
for every $\zeta,\omega\in E\ominus \Rc_\lambda$, for every $\zeta'\in \Rc_\lambda$, for every $x\in F$, and for every $\psi\in \Hs_\lambda$ (cf.~\cite[Subsection 2.3]{PWS}).

Let us now describe the reason why these representations are of particular interest to us. Observe, first, that the orbit $\cM\coloneqq\Nc\cdot (0,0)$ of $(0,0)$ under $\Nc$, which is the \v Silov boundary of $D$, is a CR submanifold of $E\times F_\C$ (cf.~\cite{Boggess} for more information on CR manifolds). In other words, the complex dimension of the `complex' tangent space $T_{(\zeta,z)}\cM\cap i T_{(\zeta,z)}\cM$ of $\cM$ at $(\zeta,z)$, as $(\zeta,z)$ runs through $\cM$, is constant, and equal to $n$.
Observe that the other orbits of $\Nc$ in $E\times F_\C$ are simply translates of $\cM$, so that they all induce the same CR structure on $\Nc$. For this structure, a distribution $u$ on $\Nc$ is CR if and only if $\overline{Z_v} u=0$ for every $v\in E$, where $Z_v$ is the left-invariant vector field on $\Nc$ which induces the Wirtinger derivative $\frac 1 2 (\partial_v -i \partial_{i v})$ at $(0,0)$. In other words,
\[
Z_v=\frac 1 2 (\partial_v -i \partial_{i v})+ i \Phi(v,\,\cdot\,)\partial_F
\]
(cf.~\cite[Subsection 2.2]{PWS}).
If $f\in L^1(\Nc)$ is CR, then $\pi(f)=0$ for every irreducible continuous unitary representation $\pi$ of $\Nc$ which is not unitarily equivalent to one of the $\pi_\lambda$, $\lambda\in \overline{\Lambda_+}$, while $\pi_\lambda(f)=\pi_\lambda(f) P_{\lambda,0}$, where $P_{\lambda,0}$ is the self-adjoint projector  $\Hs_\lambda$ onto the space of constant functions (cf.~the proof of~\cite[Proposition 2.6]{PWS}). If, in addition, there is $g$ in the Hardy space $ H^1(D)$ such that $f=g_h$ for some $h\in \Omega$, where 
\[
g_h\colon \Nc\ni (\zeta,x)\mapsto g((\zeta,x)\cdot (0,i h))=g(\zeta,x+i\Phi(\zeta)+i h),
\] 
then $\pi_\lambda(f)=0$ for every $\lambda\in \overline{\Lambda_+}\setminus \overline{\Omega'}$. Thus, when dealing with CR distributions on $\Nc$ (e.g., the restrictions of holomorphic functions to the translates of $\cM$, or their boundary values if defined), it suffices to consider only the representations $\pi_\lambda$, for $\lambda\in \overline{\Lambda_+}$, or even only for $\lambda\in \overline{\Omega'}$, under some additional assumptions.

We also recall the following useful equality:
\begin{equation}\label{eq:2}
	\tr(\pi_\lambda(\zeta,x) P_{\lambda,0})=\ee^{-\langle \lambda, \Phi(\zeta)+i x\rangle}
\end{equation}
for every $\lambda\in \overline{\Lambda_+}$ and for every $(\zeta,x)\in \Nc$ (cf.~\cite[Proposition 2.3]{PWS}).

Let us now observe, for later use, that if $\lambda\in \overline{\Lambda_+}$ and if $A\in GL(E)$, $B\in GL(F)$ and $A\times B_\C$ is an automorphism of $\Nc$, that is, $B_\C \Phi=\Phi (A\times A)$, then $A\Rc_\lambda=\Rc_{ B^*\lambda}$, and the mapping $\Us_{A, B}\colon \Hs_\lambda\to \Hs_{ B^*\lambda}$ defined by
\[
\Us_{A,B} \psi\coloneqq \abs{{\det}_\C A'} (\psi\circ A'),
\]
where $A'\colon E\ominus \Rc_\lambda\to E\ominus \Rc_{B^*\lambda}$ is the map induced by $A$,\footnote{Notice that the absolute value of the (complex) determinant of a linear map $L$ between two (complex) Hilbert spaces $H_1$ and $H_2$ of the same (finite) dimension is always well defined, and equals the (square root of the) ratio of the (Lebesgue) measures of $L(B_{H_1}(0,1)) $ and $B_{H_1}(0,1)$.}
is unitary, and intertwines $\pi_\lambda \circ (A\times B)$ and $\pi_{ B^*\lambda}$, that is,
\[
\Us_{A,B}\pi_\lambda(A\zeta,B x)=\pi_{B^*\lambda} (\zeta,x)\Us_{A,B}
\]
for every $(\zeta,x)\in \Nc$. In addition, if $A_1\in GL(E)$, $B_1\in GL(F)$ and $A_1\times B_1$ is an automorphism of $\Nc$, then $\Us_{A,B}\Us_{A_1,B_1}=\Us_{A_1 A, B_1 B}$.

We shall then say that a vector field $(v_\lambda)\in \prod_{\lambda\in \overline \Omega} \Hs_\lambda$ is Borel measurable if the mapping 
\[
T_-\ni  t\mapsto \Us_{t, t}^{-1} v_{\lambda\cdot t}\in \Hs_{\lambda}
\]
is Borel measurable for every $\lambda\in \overline{\Omega}$ or, equivalently, for $\lambda=e_{\epsb}$, $\epsb\in \Set{0,1}^r$, thanks to Proposition~\ref{prop:17} and the above discussion. Analogously, we say that a field of operators $(\tau_\lambda)\in \prod_{\lambda\in \overline \Omega} \Lin^2(\Hs_\lambda)$ is Borel measurable if $(\tau_\lambda(v_\lambda))$ is a Borel measurable vector field for every Borel measurable vector field $(v_\lambda)\in\prod_{\lambda\in \overline \Omega} \Hs_\lambda $. If $\tau_\lambda=\tau_\lambda P_{\lambda,0}$ for every $\lambda\in \overline \Omega$, this amounts to saying that the vector field $(\tau_\lambda(e_{\lambda,0}))$ is Borel measurable, where $e_{\lambda,0}$ is the unique \emph{positive} constant function in $\Hs_\lambda$ with norm $1$.

\subsection{Decent and Saturated Spaces}

\begin{deff}\label{def:6}
	Let $X$ be a semi-Hilbert\footnote{That is, a complete prehilbertian space.}  space such that $X\subseteq \Hol(D)$ set-theoretically. 
	We say that $X$ is strongly decent if the set of continuous linear functionals on $X$ which extend to continuous linear functionals on $\Hol(D)$ is dense in the weak dual topology of $X'$.

	We say that $X$ is  saturated if it contains the polar in $\Hol(D)$ of the set of continuous linear functionals on $\Hol(D)$ which induce continuous linear functionals on $X$.
\end{deff}

We recall the following simple result from~\cite[Proposition 2.13]{Rango1}.

\begin{prop}\label{prop:6}
	Let $X$ be a semi-Hilbert space  such that $X\subseteq \Hol(D)$, and let $\Gc$ be a group of automorphisms of $\Hol(D)$ which induce automorphisms of $X$. Then, the following hold:
	\begin{enumerate}
		\item[\textnormal{(1)}] $X$ is strongly  decent if and only if there is a closed $\Gc$-invariant vector subspace $V$ of $\Hol(D)$ such that $X\cap V$ is the closure of $\Set{0}$ in $X$ and the canonical mapping $X\to \Hol(D)/V$ is continuous;
		
		\item[\textnormal{(2)}] $X$ is strongly decent and  saturated if and only if the ($\Gc$-invariant) closure $V$ of $\Set{0}$ in $X$ is closed in $\Hol(D)$ and the canonical mapping $X\to \Hol(D)/V$ is continuous.
	\end{enumerate} 
\end{prop}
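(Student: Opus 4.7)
The plan is to introduce two auxiliary objects. Let $N:=\overline{\Set{0}}^X$ be the closure of $\Set{0}$ in $X$, so that $X/N$ is a Hilbert space and $X'=(X/N)'$; and define
\[
W':=\Set{\tilde L\in \Hol(D)'\colon \tilde L|_X\in X'},\qquad V_0:=\bigcap_{\tilde L\in W'}\ker \tilde L\subseteq \Hol(D).
\]
Since elements of $\Gc$ act simultaneously by automorphisms on $\Hol(D)$ and on $X$, the set $W'$ is stable under pullback, so $V_0$ is a closed $\Gc$-invariant subspace of $\Hol(D)$, and $N\subseteq X\cap V_0$ because each $\tilde L|_X\in X'$ vanishes on $N$. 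The key duality observation is that, via the bipolar theorem in the pairing between $X/N$ and $X'$, strong decency of $X$ (that is, weak-$*$ density in $X'$ of $W:=\Set{\tilde L|_X\colon \tilde L\in W'}$) is equivalent to the reverse inclusion $X\cap V_0\subseteq N$, hence to the equality $X\cap V_0=N$.

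For the forward direction of (1), I would take $V:=V_0$. The property $X\cap V=N$ is just the previous equivalence, so only the continuity of $X\to\Hol(D)/V_0$ needs verification, and I would do this via the closed graph theorem applied to the induced injective map $\bar\pi\colon X/N\to\Hol(D)/V_0$ of Fréchet spaces. Concretely: suppose $\hat x_n\to 0$ in $X$ and $\hat x_n+V_0\to y$ in $\Hol(D)/V_0$; lift the latter convergence to $z_n\to z$ in $\Hol(D)$ with $z_n-\hat x_n\in V_0$ (possible because $\Hol(D)\to\Hol(D)/V_0$ is a quotient map between metrizable spaces), and test against any $\tilde L\in W'$: the identities $\tilde L(z_n)=\tilde L(\hat x_n)=(\tilde L|_X)(\hat x_n)\to 0$ force $\tilde L(z)=0$ for every $\tilde L\in W'$, so $z\in V_0$ and $y=0$.

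For the reverse direction of (1), continuity of $X\to\Hol(D)/V$ forces every element of the polar $V^\circ\subseteq \Hol(D)'$ to restrict to $X'$, so $V^\circ\subseteq W'$; the bipolar theorem (using that $V$ is closed and convex in the Fréchet space $\Hol(D)$) then gives $V=V^{\circ\circ}\supseteq (W')^\circ=V_0$, whence $X\cap V_0\subseteq X\cap V=N$ and strong decency follows. For (2), one observes that saturation is, by definition, the inclusion $V_0\subseteq X$; combined with $X\cap V_0=N$ from strong decency, this forces $V_0=N$, so $N$ is closed in $\Hol(D)$ and the continuity in (2) is a special case of (1). Conversely, the hypotheses of (2) applied with $V=N$ in (1) yield strong decency, and the polar argument just used, now with $V=N$, gives $V_0\subseteq N\subseteq X$, i.e.\ saturation.

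The main delicate point I expect is the closed-graph step in the forward direction of (1): it hinges on the Fréchet structure of both $X/N$ and $\Hol(D)/V_0$, on the possibility of lifting convergent sequences through the Fréchet quotient $\Hol(D)\to\Hol(D)/V_0$, and on the fact that, by construction, $W'$ separates points of $\Hol(D)/V_0$. Everything else amounts to routine polar–bipolar manipulations once $V_0$ has been identified as the natural candidate subspace.
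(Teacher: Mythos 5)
Your argument is correct and complete. Note that the paper itself does not prove this proposition --- it is recalled verbatim from \cite[Proposition 2.13]{Rango1} --- so there is no in-text proof to compare against; your route (identifying $V_0$ as the annihilator of $W'$, reducing strong decency to $X\cap V_0=N$ via the bipolar theorem in the duality $\langle X/N,(X/N)'\rangle$, and then getting continuity of $X\to\Hol(D)/V_0$ from the closed graph theorem) is the natural one and all the delicate points are handled: the $\Gc$-invariance of $V_0$ via stability of $W'$ under pullback, the sequence-lifting through the Fr\'echet quotient $\Hol(D)\to\Hol(D)/V_0$, and the separation of points of $\Hol(D)/V_0$ by $W'$, which is what closes the graph. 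The one hypothesis your closed-graph step silently leans on is that $X/N$ is complete (hence Fr\'echet); this is exactly what the paper's convention ``semi-Hilbert $=$ complete prehilbertian'' supplies, so it is worth stating explicitly but is not a gap. The reverse implications via $V^{\circ}\subseteq W'$ and $V^{\circ\circ}=V$ for closed subspaces, and the reading of saturation as $V_0\subseteq X$, are likewise accurate.
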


Notice that, if $X$ is strongly  decent and $V$ is as in (1), then $X+V$, endowed with the seminorm which is $0$ on $V$ and induces the given seminorm on $X$, is strongly decent and  saturated. In other words, every strongly  decent space has a `saturation'.

\section{Affinely Invariant Spaces of Holomorphic Functions $D$}\label{sec:2}

In this section, we shall first recall some results from~\cite{Ishi3,Ishi4,Ishi5} which characterize the $\vect s\in \C^r$ for which $B^{-\vect s}$ (cf.~Definition~\ref{def:5}) is the reproducing kernel of some RKHS, and then describe and classify the corresponding RKHS according to various kinds of invariance. 
We shall then apply these results in order to study the strongly decent semi-Hilbert spaces of holomorphic functions on the tube domain $F+i \Omega$ in which certain natural representations of $\Aff$ are bounded.

\subsection{The Spaces $\Ac_{\vect s}$}

\begin{deff}
	Take $\vect s\in \R^r$. Then, we define a representation $\Uc_{\vect s}$ of $G_T$ in $\Hol(D)$ setting
	\[
	\Uc_{\vect s}(\phi) f=(f\circ \phi^{-1}) \Delta^{\vect s/2}(\phi^{-1})
	\]
	for every $\phi\in G_T$ and for every $f\in \Hol(D)$. If $\vect s\in \R \vect 1_r$, then we extend $\Uc_{\vect s}$ to $\Aff$ by means of the same formula  (cf.~Lemma~\ref{lem:6}). In other words, we set 
	\[
	\Uc_{\lambda \vect 1_r}(\phi)f=(f\circ \phi^{-1}) \abs{J \phi^{-1}}^{\lambda/p}
	\]
	for every $\lambda\in \R$, for every $\phi\in \Aff$, and for every $f\in \Hol(D)$, where $p=(n+2m)/r$ is the genus of $D$.
\end{deff}

Observe that $[\Uc_{\vect s}(\phi)\otimes \overline{\Uc_{\vect s}(\phi)}] B^{-\vect s}=B^{-\vect s}$ for every $\phi\in G_T$ (cf.~Definition~\ref{def:5}), so that, if $B^{-\vect s}$ is the reproducing kernel of a RKHS $H$, then $H$ is $\Uc_{\vect s}$-invariant with its norm (cf.~Subsection~\ref{sec:2:8}).

We recall the following result, which summarizes some particular cases of~\cite[Theorem A]{Ishi4} and~\cite[Theorem 6]{Ishi6}, where the general case in which $\Omega$ is homogeneous and $\vect s\in \C^r$ is investigated. 

\begin{prop}\label{prop:4b}
	If $\vect s\in \Gc_*(\Omega)$, then $B^{-\vect s}$ is the reproducing kernel of a RKHS $\Ac_{\vect s}$ of holomorphic functions on $D$ in which $U_{\vect s}$ induces an irreducible unitary representation. 
	
	Conversely, if $\vect s\in \R^r$ and $H$ is a RKHS of holomorphic functions on $D$ in which $U_{\vect s}$ induces a bounded (resp.\ unitary) representation, then $\vect s\in \Gc_*(\Omega)$ and $H=\Ac_{\vect s}$ with equivalent (resp.\ proportional) norms.
\end{prop}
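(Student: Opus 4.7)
For the existence part of the forward direction, the plan is to realize $B^{-\vect s}$ as a superposition of manifestly positive kernels. Since $\vect s\in\Gc_*(\Omega)$, the Riesz distribution $I^{\vect s}_*$ is a positive Radon measure on $\overline\Omega$ whose Laplace transform is $\Delta^{-\vect s}$ on $\Omega$ (Definition~\ref{def:1}). For $(\zeta,z),(\zeta',z')\in D$, the point $w=(z-\overline{z'})/(2i)-\Phi(\zeta,\zeta')$ satisfies
\[
\Re w=\tfrac12(\Im z-\Phi(\zeta))+\tfrac12(\Im z'-\Phi(\zeta'))+\tfrac12 \Phi(\zeta-\zeta')\in\Omega
\]
by the $\overline\Omega$-positivity of $\Phi$, so $w\in\Omega+iF$ and the holomorphically extended Laplace formula
\[
B^{-\vect s}_{(\zeta',z')}(\zeta,z)=\Delta^{-\vect s}(w)=\int_{\overline\Omega}\ee^{-\langle\lambda,w\rangle}\,\dd I^{\vect s}_*(\lambda)
\]
applies. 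A direct manipulation factors the integrand as
\[
\ee^{-\langle\lambda,w\rangle}=\ee^{i\langle\lambda,z\rangle/2}\,\overline{\ee^{i\langle\lambda,z'\rangle/2}}\cdot\ee^{\langle\lambda,\Phi(\zeta,\zeta')\rangle},
\]
that is, as the product of a rank-one positive kernel in $(z,z')$ and the Fock-type positive kernel $\ee^{\langle\lambda,\Phi(\zeta,\zeta')\rangle}$ (positive because $\langle\lambda,\Phi(\,\cdot\,,\,\cdot\,)\rangle$ is a positive semidefinite hermitian form on $E$ for $\lambda\in\overline\Omega\subseteq\overline{\Lambda_+}$). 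Integrating against the positive measure $I^{\vect s}_*$ preserves positivity, so $B^{-\vect s}$ is a positive kernel, yielding the RKHS $\Ac_{\vect s}$. Unitarity of $\Uc_{\vect s}(G_T)$ on $\Ac_{\vect s}$ is then the invariance $[\Uc_{\vect s}(\phi)\otimes\overline{\Uc_{\vect s}(\phi)}]B^{-\vect s}=B^{-\vect s}$ already observed in the text.

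For the converse, suppose $H$ is a RKHS on which $\Uc_{\vect s}(G_T)$ acts boundedly. If the action is unitary I proceed directly; otherwise, I invoke amenability of $G_T$ (Lemma~\ref{lem:3}) to average the Hilbert norm of $H$ against a right-invariant mean, obtaining an equivalent $\Uc_{\vect s}(G_T)$-invariant Hilbert norm. The reproducing kernel $\Kc'$ of $H$ for this norm satisfies $[\Uc_{\vect s}(\phi)\otimes\overline{\Uc_{\vect s}(\phi)}]\Kc'=\Kc'$ for every $\phi\in G_T$. Evaluating on the diagonal at $\phi(0,ie_\Omega)=(\zeta,z)$, simple transitivity of $G_T$ on $D$ together with the formula $\Delta^{\vect s}(t)=\Delta^{\vect s}(t\cdot e_\Omega)$ from Subsection~\ref{sec:2:7} yields
\[
\Kc'((\zeta,z),(\zeta,z))=c\,\Delta^{-\vect s}(\Im z-\Phi(\zeta))=c\,B^{-\vect s}_{(\zeta,z)}(\zeta,z),\qquad c\coloneqq \Kc'((0,ie_\Omega),(0,ie_\Omega))\Meg 0.
\]
Since both kernels are sesquiholomorphic on $D\times D$, equality on the diagonal propagates to all of $D\times D$, giving $\Kc'=c\, B^{-\vect s}$. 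If $H\neq\Set{0}$ then $c>0$, so $B^{-\vect s}$ is a positive kernel, which proves $\vect s\in\Gc_*(\Omega)$ and identifies $H=\Ac_{\vect s}$ with equivalent (resp.\ proportional) norms. Irreducibility in the forward direction now follows from the converse: any closed $\Uc_{\vect s}(G_T)$-invariant subspace $V$ of $\Ac_{\vect s}$ is itself a RKHS on which $\Uc_{\vect s}(G_T)$ acts unitarily, so by the converse its reproducing kernel is a nonnegative scalar multiple of $B^{-\vect s}$, forcing $V=\Set{0}$ or $V=\Ac_{\vect s}$.

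The main technical obstacle is the averaging step in the non-unitary case: one needs $\phi\mapsto\langle\Uc_{\vect s}(\phi)f,\Uc_{\vect s}(\phi)g\rangle$ to be sufficiently regular for the invariant mean on $G_T$ to produce a genuine equivalent Hilbert scalar product on $H$. This is routine under natural continuity hypotheses on $\Uc_{\vect s}$ but requires explicit verification. A secondary delicate point is ensuring that the sesquiholomorphic propagation from the diagonal indeed yields the global identity $\Kc'=c\,B^{-\vect s}$; this rests on the fact that the diagonal is a maximal totally real submanifold of $D\times D$.
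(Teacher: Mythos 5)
The paper itself does not prove this proposition: it is quoted as a special case of Ishi's results (\cite[Theorem A]{Ishi4}, \cite[Theorem 6]{Ishi6}), so your self-contained argument is necessarily a different route, and most of it is sound. The Laplace--Fock factorization of $B^{-\vect s}$ for $\vect s\in\Gc_*(\Omega)$ is correct: the computation of $\Re w$ checks out, $\ee^{\langle\lambda,\Phi(\cdot,\cdot)\rangle}$ is a positive kernel by Schur's product theorem, and integration against the positive measure $I^{\vect s}_*$ preserves positivity. The averaging over the amenable group $G_T$ is exactly the device the paper describes immediately after the statement, and the regularity worry you raise is unfounded, since the mean acts on all of $\ell^\infty(G_T)$ and boundedness of the representation is all that is used. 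The determination of an invariant kernel on the diagonal by simple transitivity, its propagation off the diagonal by sesquiholomorphy, and the derivation of irreducibility from the uniqueness statement are all fine.

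The genuine gap is the single clause ``$B^{-\vect s}$ is a positive kernel, which proves $\vect s\in\Gc_*(\Omega)$.'' With the paper's definitions, $\vect s\in\Gc_*(\Omega)$ means that the Riesz distribution $I^{\vect s}_*$ is a \emph{positive measure}, and positive-definiteness of the kernel $B^{-\vect s}$ does not yield this by inspection: it is precisely the hard half of the Vergne--Rossi/Gindikin/Ishi determination of the Wallach set. To close it you would need an argument of Bochner type: restrict to $\zeta=\zeta'=0$ and then to a slice $z=x+iy$, $z'=x'+iy$ with $y\in\Omega$ fixed, so that $x-x'\mapsto\Delta^{-\vect s}\bigl((x-x')/(2i)+y\bigr)$ is a continuous positive-definite function on $F$; by Bochner's theorem it is the Fourier transform of a positive bounded measure $\mu_y$; then identify $\mu_y$ with a multiple of $\ee^{-\langle\cdot,y\rangle}I^{\vect s}_*$ via the injectivity of the Fourier transform on tempered distributions, using that $\Lc I^{\vect s}_*=\Delta^{-\vect s}$ on $\Omega$ and that $I^{\vect s}_*$ is tempered and supported in $\overline\Omega$, whence $I^{\vect s}_*\Meg 0$. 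This is a genuine additional step --- it is where positivity of the \emph{measure}, rather than of the kernel, is actually produced --- and without it the converse is not established. Note that your irreducibility argument survives independently of this gap, since there one already knows that $B^{-\vect s}$ is a positive kernel.
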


We observe explicitly that, in this case, there is virtually no difference between considering the case in which $U_{\vect s}$ induces a bounded or a unitary representation, thanks to the amenability of $G_T$ (cf.~\cite{Kob1,Kob2,Ku,ArazyFisher2}. Namely, since $G_T$ is amenable (cf.~Lemma~\ref{lem:3}), one may always replace the scalar product of $H$ with an equivalent one which is $\Uc_{\vect s}$-invariant, such as
\[
(f,g)\mapsto \mathtt m(\phi\mapsto \langle \Uc_{\vect s}(\phi)f\vert \Uc_{\vect s}(\phi)g\rangle),
\]
where $\mathtt m$ denotes a right-invariant mean on $G_T$.

In particular, $\Ac_{\vect s}=A^2_{\vect s}$ with proportional norms when $\vect s\succ \frac{n+m}{r}\vect 1_r+\frac 1 2 \vect m$, and $\Ac_{\vect s}=\widetilde A^{2}_{\vect s}(D)$ as normable spaces when $\vect s\succ \frac{1}{2}\vect{m'}$ (cf.~Section~\ref{sec:Bergman}).

\begin{deff}
	For every $\vect s\in \Gc_*(\Omega)$, we define $\Ac_{\vect s}$ as the RKHS of holomorphic functions whose reproducing kernel is $B^{-\vect s}$.
	
	In addition, we denote by $\Ms_{\vect s}(\overline \Omega)$ the space of Borel measurable fields of operators $(\tau_\lambda)\in \prod_{\lambda\in \overline \Omega} \Lin^2(\Hs_\lambda)$ such that $\tau_\lambda=\tau_\lambda P_{\lambda,0}$ for every $\lambda\in \overline \Omega$, and such that there is $N\in\N$ such that
	\[
	\int_{\overline \Omega} \frac{\norm{\tau_\lambda}_{\Lin^2(\Hs_\lambda)}}{(1+\abs{\lambda})^N}\,\dd I_*^{\vect s}(\lambda)
	\]
	is finite, modulo the space of $I_*^{\vect s}$-negligible fields of operators (cf.~Subsection~\ref{sec:Fourier}).
	
	We denote by $L^2_{\vect s}(\overline \Omega)$ the space of $(\tau_\lambda)\in \Ms_{\vect s}(\overline \Omega)$ such that
	\[
	\int_{\overline \Omega} \norm{\tau_{\lambda/2}}_{\Lin^2(\Hs_{\lambda/2})}^2\,\dd I_*^{\vect s}(\lambda)=2^{s_1+\cdots+s_r}\int_{\overline \Omega} \norm{\tau_{\lambda}}_{\Lin^2(\Hs_{\lambda})}^2\,\dd I_*^{\vect s}(\lambda)
	\]
	is finite, endowed with the corresponding Hilbert norm.
\end{deff}

Notice that, since $\vect s \Meg \vect 0$ and $I_*^{\vect s}$ is a Radon measure such that $(\rho\,\cdot\,)_* I_*^{\vect s}= \rho^{-(s_1+\cdots+s_r)} I_*^{\vect s}$ for every $\rho>0$,   if a Borel measurable field of operators $(\tau_\lambda)$ is such that $\int_{\overline \Omega} \norm{\tau_\lambda}_{\Lin^2(\Hs_\lambda)}^2\,\dd I_*^{\vect s}(\lambda)$ is finite, then $\int_{\overline \Omega} \frac{\norm{\tau_\lambda}_{\Lin^2(\Hs_\lambda)}}{(1+\abs{\lambda})^N}\,\dd I_*^{\vect s}(\lambda)$ is finite for $N>(s_1+\dots+s_r)/2$. Thus, the definition of $L^2_{\vect s}(\overline \Omega)$ is natural. Further, $L^2_{\vect s}(\overline \Omega)$ is a complete (hence closed) vector subspace of the direct integral $\int_{\overline \Omega}^\oplus \Lin^2(\Hs_{\lambda/2})\,\dd I^{\vect s}_*(\lambda)$.

We are now able to provide a `Fourier-type' description of $\Ac_{\vect s}$ which will be necessary in the sequel.

\begin{prop}\label{prop:1b}
	Take $\vect s\in \Gc_*(\Omega)$, and define $\Pc_{\vect s}\colon \Ms_{\vect s}(\overline \Omega)\to \Hol(D)$ so that
	\[
	\Pc_{\vect s}(\tau)(\zeta,z) \coloneqq \int_{\overline \Omega} \tr(\tau_{\lambda/2} \pi_{\lambda/2}(\zeta,\Re z)^*) \ee^{-\langle \lambda/2, \Im z-\Phi(\zeta)\rangle}\,\dd I_*^{\vect s}(\lambda)
	\]
	for every $\tau=(\tau_\lambda)\in \Ms_{\vect s}(\overline \Omega)$. Then, $\Pc_{\vect s}$ is well defined and one-to-one, and  induces an isometric isomorphism of $L^2_{\vect s}(\overline \Omega)$ onto $\Ac_{\vect s}$.
\end{prop}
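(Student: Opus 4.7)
My plan is to identify, for each $(\zeta',z')\in D$, the reproducing kernel $B^{-\vect s}_{(\zeta',z')}$ as the image under $\Pc_{\vect s}$ of an explicit rank-one field in $L^2_{\vect s}(\overline\Omega)$, and to match norms via the Laplace representation $\Lc I_*^{\vect s}=\Delta^{-\vect s}$. Since the $B^{-\vect s}_{(\zeta',z')}$ are total in $\Ac_{\vect s}$, this yields an isometric embedding of the closed linear span of these rank-one fields into $\Ac_{\vect s}$ with dense image; the remaining task is to show that their span exhausts $L^2_{\vect s}(\overline\Omega)$ and that $\Pc_{\vect s}$ is injective on $\Ms_{\vect s}(\overline\Omega)$.

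For well-definedness, fix $(\zeta,z)\in D$; since $\Im z-\Phi(\zeta)\in\Omega$, the factor $\ee^{-\langle\lambda/2,\Im z-\Phi(\zeta)\rangle}$ decays exponentially on $\overline\Omega$. Because $\tau_{\lambda/2}=\tau_{\lambda/2}P_{\lambda/2,0}$, the product $P_{\lambda/2,0}\pi_{\lambda/2}(\zeta,\Re z)^*$ is rank one with Hilbert--Schmidt norm $1$, giving the bound
\[
\abs{\tr(\tau_{\lambda/2}\pi_{\lambda/2}(\zeta,\Re z)^*)}\meg \norm{\tau_{\lambda/2}}_{\Lin^2(\Hs_{\lambda/2})},
\]
so that the exponential absorbs the polynomial weight from the definition of $\Ms_{\vect s}(\overline\Omega)$ and the integral converges absolutely, uniformly on compacta of $D$. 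Holomorphicity of $\Pc_{\vect s}(\tau)$ then follows by differentiating under the integral: the matrix coefficient $(\zeta,x)\mapsto \tr(\tau_{\lambda/2}\pi_{\lambda/2}(\zeta,x)^*)\ee^{-\langle\lambda/2,\Phi(\zeta)+ix\rangle}$ is annihilated by the left-invariant CR vector fields on $\Nc$ and hence extends holomorphically from $\Nc$ into $D$.

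The central computation: for $(\zeta',z')\in D$ set
\[
\tau^{(\zeta',z')}_\mu\coloneqq \ee^{-\langle\mu,\Im z'-\Phi(\zeta')\rangle}\,\pi_\mu(\zeta',\Re z')P_{\mu,0}\qquad(\mu\in\overline\Omega).
\]
Using cyclicity of the trace, the identity $\pi_\mu(\zeta,x)^*=\pi_\mu(-\zeta,-x)$, the product law in $\Nc$, and formula~\eqref{eq:2}, one verifies
\[
\tr(\tau^{(\zeta',z')}_{\lambda/2}\pi_{\lambda/2}(\zeta,\Re z)^*)\ee^{-\langle\lambda/2,\Im z-\Phi(\zeta)\rangle}=\ee^{-\langle\lambda,(z-\overline{z'})/(2i)-\Phi(\zeta,\zeta')\rangle}.
\]
The real part $\tfrac12(\Im z-\Phi(\zeta))+\tfrac12(\Im z'-\Phi(\zeta'))+\tfrac12\Phi(\zeta-\zeta')$ of the argument lies in $\Omega$, and analytically continuing $\Lc I_*^{\vect s}=\Delta^{-\vect s}$ from $\Omega$ to $\Omega+iF$ yields $\Pc_{\vect s}(\tau^{(\zeta',z')})=B^{-\vect s}_{(\zeta',z')}$. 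An analogous calculation gives $\norm{\tau^{(\zeta',z')}}_{L^2_{\vect s}(\overline\Omega)}^2=\Delta^{-\vect s}(\Im z'-\Phi(\zeta'))=B^{-\vect s}_{(\zeta',z')}(\zeta',z')$, which is precisely the square of the $\Ac_{\vect s}$-norm of $B^{-\vect s}_{(\zeta',z')}$. Polarisation then promotes this to the inner-product identity, so that $\Pc_{\vect s}$ restricted to the closed linear span of the $\tau^{(\zeta',z')}$ is an isometry onto $\Ac_{\vect s}$.

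The main obstacle is to show that this closed span is all of $L^2_{\vect s}(\overline\Omega)$, equivalently that $\Pc_{\vect s}$ is injective on $L^2_{\vect s}(\overline\Omega)$. The pairing identity $\langle\sigma\,\vert\,\tau^{(\zeta',z')}\rangle_{L^2_{\vect s}(\overline\Omega)}=\Pc_{\vect s}(\sigma)(\zeta',z')$, a direct consequence of $\sigma_\mu=\sigma_\mu P_{\mu,0}$ and cyclicity, simultaneously extends $\Pc_{\vect s}$ to a continuous map $L^2_{\vect s}(\overline\Omega)\to\Ac_{\vect s}$ and reduces density to injectivity. Assume $\Pc_{\vect s}(\sigma)\equiv 0$, fix $\zeta\in E$ and $y\in\Phi(\zeta)+\Omega$, and write $\pi_{\lambda/2}(\zeta,\Re z)=\ee^{-i\langle\lambda/2,\Re z\rangle}\pi_{\lambda/2}(\zeta,0)$; then $\Re z\mapsto\Pc_{\vect s}(\sigma)(\zeta,\Re z+iy)$ is a classical Fourier transform on $F$ of a bounded measure, so Fourier uniqueness forces $\tr(\sigma_{\lambda/2}\pi_{\lambda/2}(\zeta,0)^*)=0$ for $I_*^{\vect s}$-a.e.\ $\lambda$ and every $\zeta\in E$. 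Writing $\sigma_\mu=v_\mu\otimes\overline{e_{\mu,0}}$ with $v_\mu\in\Hs_\mu$, this becomes $\langle v_\mu\,\vert\,\pi_\mu(\zeta,0)e_{\mu,0}\rangle_{\Hs_\mu}=0$ for every $\zeta$; since the coherent states $\pi_\mu(\zeta,0)e_{\mu,0}\colon\omega\mapsto\ee^{\langle\mu,2\Phi(\omega,\zeta)-\Phi(\zeta)\rangle}$ span a dense subspace of $\Hs_\mu$, we conclude $v_\mu=0$ and hence $\sigma=0$. The same Fourier-uniqueness argument, tested distributionally in $\Re z$ against Schwartz functions to absorb the polynomial weight, yields injectivity of $\Pc_{\vect s}$ on $\Ms_{\vect s}(\overline\Omega)$ as well.
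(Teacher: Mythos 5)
Your proof is correct, but it follows a genuinely different and more self-contained route than the paper. The paper outsources the entire ``isometric isomorphism of $L^2_{\vect s}(\overline\Omega)$ onto $\Ac_{\vect s}$'' part to Ishi's results (\cite[Proposition 3.6 and Theorem 4.10]{Ishi4}) and only proves well-definedness and injectivity; for injectivity it fixes $(\zeta,x)$ and varies $h=\Im z-\Phi(\zeta)\in\Omega$, using Stone--Weierstrass density of the span of the $\ee^{-\langle\cdot,h\rangle}$ in $C_0(\overline\Omega)$ to kill the measure $\tr(\tau_\lambda\pi_\lambda(\zeta,x)^*)\,\dd I^{\vect s}_*$, and then invokes irreducibility of $\pi_\lambda$ to conclude $\tau_\lambda e_{\lambda,0}=0$. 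You instead vary $\Re z$ and use Fourier uniqueness on $F$, then totality of the coherent states $\pi_\mu(\zeta,0)e_{\mu,0}$ — an equivalent endgame, since these coherent states are (up to conjugation and a nonzero factor) the reproducing kernels of the Fock space $\Hs_\mu$, though the paper's appeal to irreducibility is the cleaner way to phrase it. The real added value of your argument is the explicit identification $\Pc_{\vect s}(\tau^{(\zeta',z')})=B^{-\vect s}_{(\zeta',z')}$ via formula~\eqref{eq:2} and the analytic continuation of $\Lc I^{\vect s}_*=\Delta^{-\vect s}$ to $\Omega+iF$, together with the pairing identity $\langle\sigma\mid\tau^{(\zeta',z')}\rangle_{L^2_{\vect s}(\overline\Omega)}=\Pc_{\vect s}(\sigma)(\zeta',z')$: this simultaneously gives the isometry on the span of the $\tau^{(\zeta',z')}$, reduces density of that span to injectivity, and shows that $\Pc_{\vect s}$ agrees with the continuous extension of the isometry, so you recover the Ishi isomorphism from scratch rather than citing it. Two small points you should tighten if writing this up: the holomorphy of $\Pc_{\vect s}(\tau)$ is most directly seen by checking that each integrand $\tr(\tau_{\lambda/2}\pi_{\lambda/2}(\zeta,\Re z)^*)\ee^{-\langle\lambda/2,\Im z-\Phi(\zeta)\rangle}$ is itself holomorphic in $(\zeta,z)$ (which follows from the explicit formula for $\pi_\mu(\zeta,x)e_{\mu,0}$, the conjugations in the matrix coefficient combining with the weight into $\ee^{i\langle\lambda/2,z\rangle}$ and $\ee^{2\langle\lambda/2,\Phi(\zeta,\omega)\rangle}$) rather than by the CR-field remark; and for $\tau\in\Ms_{\vect s}(\overline\Omega)$ the relevant measure in the Fourier-uniqueness step is already bounded because $(1+\abs{\lambda})^{N}\ee^{-\langle\lambda/2,h\rangle}$ is bounded on $\overline\Omega$ for $h\in\Omega$, so no distributional testing is needed.
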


\begin{proof}
	The second part is essentially a consequence of~\cite[Proposition 3.6 and Theorem 4.10]{Ishi4}, so that it will suffice to prove that $\Pc_{\vect s}$ is well defined and one-to-one. 
	Observe first that, denoting by $\Lin^1(\Hs_\lambda)$ the space of trace-class endomorphisms of $\Hs_\lambda$,
	\[
	\norm{\tau_\lambda}_{\Lin^1(\Hs_\lambda)}=\norm{\tau_\lambda P_{\lambda,0}}_{\Lin^1(\Hs_\lambda)}\meg \norm{\tau_\lambda}_{\Lin^2(\Hs_\lambda)}
	\]
	for every $\tau\in \Ms_{\vect s}(\overline{\Omega})$ and for  every $\lambda\in \overline{\Omega}$.
	Therefore,
	\[
	\int_{\overline \Omega} \abs{\tr(\tau_{\lambda/2} \pi_{\lambda/2}(\zeta,\Re z)^*) }\ee^{-\langle \lambda/2, \Im z-\Phi(\zeta)\rangle}\,\dd I_*^{\vect s}(\lambda)\meg \int_{\overline \Omega} \norm{\tau_\lambda}_{\Lin^2(\Hs_\lambda)} \ee^{-\langle \lambda/2, \Im z-\Phi(\zeta)\rangle}\,\dd I_*^{\vect s}(\lambda),
	\]
	which is finite for every $(\zeta,z)\in D$ since the function $\lambda \mapsto\ee^{-\langle \lambda/2, \Im z-\Phi(\zeta)\rangle}$ decays exponentially on $\overline \Omega$. In particular, the function $\abs{\tr(\tau_{\lambda/2} \pi_{\lambda/2}(\zeta,\Re z)^*) }\ee^{-\langle \lambda/2, \Im z-\Phi(\zeta)\rangle} $ is uniformly bounded by an $I^{\vect s}_*$-integrable function of $\lambda$ as long as $(\zeta,z)$ stays in a compact subset of $D$. Thus, $\Pc_{\vect s}$ is well defined and maps $\Ms_{\vect s}(\overline \Omega)$ into $C(D)$. Since $\Pc_{\vect s}$ also maps $L^2_{\vect s}(\overline \Omega)$ into $\Ac_{\vect s}\subseteq \Hol(D)$ by the second part of the statement, by approximation we then see that $\Pc_{\vect s}$ maps $\Ms_{\vect s}(\overline \Omega)$ into $\Hol(D)$.
	
	Now, take $\tau\in \Ms_{\vect s}(\overline{\Omega})$ so that $\Pc_{\vect s}(\tau)=0$. Observe that the vector space $V$ generated by the $\ee^{-\langle \,\cdot\,,h\rangle}$, as $h$ runs through $\Omega$, is dense in $C_0(\overline{\Omega})$ by the Stone--Weierstrass theorem. Then, 
	\[
	\tr(\tau_\lambda \pi_\lambda(\zeta,x)^*)=\langle \tau_\lambda e_{\lambda,0}\vert \pi_\lambda(\zeta,x) e_{\lambda,0}\rangle=0
	\]
	for $I^{\vect s}_*$-almost every $\lambda\in \overline{\Omega}$ and for every $(\zeta,x)\in \Nc$, where $e_{\lambda,0}$ is the unique positive constant function with norm $1$ in $\Hs_\lambda$. Since $\pi_\lambda$ is irreducible and $e_{\lambda,0}\neq 0$, this implies that $\tau_\lambda e_{\lambda,0}=0$ for $I^{\vect s}_*$-almost every $\lambda\in \overline{\Omega}$. Since $\tau_\lambda=\tau_\lambda P_{\lambda,0}$ for   every $\lambda\in \overline{\Omega}$, this implies that $\tau=0$, so that $\Pc_{\vect s}$ is one-to-one.
\end{proof}

\begin{prop}\label{prop:2b}
	Take $\vect s,\vect s'\in \Gc_*(\Omega)$. Then, the unitary representations $\Uc_{\vect s}$ and $\Uc_{\vect s'}$ of $G_T$ in $\Ac_{\vect s}$ and in $\Ac_{\vect s'}$, respectively, are unitarily equivalent if and only if there is $\epsb\in \Set{0,1}^r$ such that $\vect s,\vect s'\succ_{\epsb} \frac 1 2 \vect m'^{(\epsb)}$. 
	
	If, in addition, $\vect s'=\vect s+2 \vect s''$ for some $\vect s''\in \N_\Omega^*$, then the mapping $\Ac_{\vect s}\ni  f\mapsto f*I^{-\vect s''}\in \Ac_{\vect s'} $ intertwines $U_{\vect s}$ and $U_{\vect s'}$ and is a multiple of an isometric isomorphism.
\end{prop}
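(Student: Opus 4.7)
My plan is to work entirely on the Fourier side supplied by Proposition~\ref{prop:1b}, which identifies $\Ac_{\vect s}$ isometrically with $L^2_{\vect s}(\overline \Omega)$. The first step is to rewrite the three building blocks of the action of $G_T=\Nc\rtimes T_-$ on $\Ac_{\vect s}$ in this model. A short computation using $(\zeta,\Re z-x_0)=(\zeta,\Re z)(0,-x_0)$ and $\pi_{\lambda/2}(0,-x_0)=\ee^{i\langle\lambda/2,x_0\rangle}$ shows that the centre $F\subseteq\Nc$ acts by the multiplication operators $\tau_{\lambda/2}\mapsto \ee^{-i\langle\lambda/2,x_0\rangle}\tau_{\lambda/2}$; the complementary elements $(\zeta_0,0)\in\Nc$ act fibrewise by the bounded operators induced by $\pi_{\lambda/2}(\zeta_0,0)$; and $t\in T_-$ acts by combining a scaling $\lambda\mapsto\lambda\cdot t^{-1}$ of the base with the unitary identification $\Us_{t,t}$ between $\Hs_\lambda$ and $\Hs_{\lambda\cdot t^{-1}}$ and the Jacobian/character factor $\Delta^{\vect s/2}(t)$. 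By Proposition~\ref{prop:17}(2)--(3), the measure $I^{\vect s}_*$ is supported on $\overline{\Omega_\epsb^*}$ and equals $\Gamma_{\Omega_\epsb^*}(\epsb\vect s)^{-1}\Delta^{\epsb\vect s}_{\epsb,*}\cdot\nu_{\Omega_\epsb^*}$ whenever $\vect s\succ_{\epsb}\tfrac{1}{2}\vect m'^{(\epsb)}$.

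For the equivalence direction, suppose $\vect s$ and $\vect s'$ both lie in the same stratum $\epsb$. Then $I^{\vect s}_*$ and $I^{\vect s'}_*$ are mutually absolutely continuous, with Radon--Nikodym derivative $\Delta^{\epsb(\vect s-\vect s')}_{\epsb,*}$ (up to the normalising constants). Fibrewise multiplication by the square root of this ratio is a unitary map $L^2_{\vect s}(\overline\Omega)\to L^2_{\vect s'}(\overline\Omega)$ which trivially commutes with the multiplication action of $F$ and with the fibrewise operators $\pi_{\lambda/2}(\zeta_0,0)$. For the conversely direction, I would look at the restriction of $\Uc_{\vect s}$ to the centre $F$: the first step of the Fourier model exhibits it as a spectral multiplication operator whose scalar spectral measure is equivalent to $I^{\vect s}_*$, hence with support equal to $\overline{\Omega_\epsb^*}$. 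By Proposition~\ref{prop:17}(1) distinct $\epsb$ give disjoint $T_-$-orbits, and since each orbit is dense in its closure and locally closed, distinct orbits yield distinct closures; unitary equivalence of $\Uc_{\vect s}$ and $\Uc_{\vect s'}$ therefore forces $\epsb=\epsb'$.

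For the second assertion, the key observation is that if $\vect s,\vect s'=\vect s+2\vect s''\in\Gc_*(\Omega)$ and $\vect s''\in\N^*_\Omega$, then (comparing strata componentwise) $\vect s''\in\epsb\C^r$, i.e.\ $\vect s''_j=0$ on the indices where $\eps_j=0$. Convolution with $I^{-\vect s''}$ is a translation-invariant operator in the $F$-variable whose symbol, by the defining property $\Lc I^{-\vect s''}=\Delta^{\vect s''}_*$ and the homogeneity of $\Delta^{\vect s''}_*$, is $(i/2)^{|\vect s''|}\Delta^{\vect s''}_*(\lambda)$. On the Fourier side the map therefore becomes fibrewise multiplication by this symbol; on $\Omega_\epsb^*$, Proposition~\ref{prop:17}(4) allows us to replace $\Delta^{\vect s''}_*$ by $\Delta^{\epsb\vect s''}_{\epsb,*}$, so the density $\Delta^{\epsb\vect s}_{\epsb,*}$ of $I^{\vect s}_*$ is multiplied by $\bigl(\Delta^{\epsb\vect s''}_{\epsb,*}\bigr)^2$ and yields precisely a constant multiple of the density of $I^{\vect s+2\vect s''}_*$. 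Equivariance with respect to $\Nc$ is trivial (a Fourier multiplier on the centre commutes with left translations), while for $T_-$ one uses that $\Delta^{\vect s''}_*$ is a character of $T_-$, compatible with the scaling factor in the Fourier model via Lemma~\ref{lem:8} and the formula for the multiplier of $\nu_{\Omega_\epsb^*}$.

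The main obstacle I foresee is the $T_-$-equivariance of the unitary maps constructed in both parts. In the Fourier model the action of $T_-$ involves a dilation of $\lambda$, the fibre isomorphisms $\Us_{t,t}$, the multiplier of $\nu_{\Omega_\epsb^*}$ given by Proposition~\ref{prop:17}(3), and the character factor $\Delta^{\vect s/2}$; verifying that these four ingredients assemble into the same final character on both sides (so that the Radon--Nikodym isometry, respectively the multiplication by $\Delta^{\vect s''}_*$, truly intertwines) is the one place where the argument cannot be shortcut and will require careful bookkeeping.
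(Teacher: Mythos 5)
Your proposal takes a genuinely different route from the paper. The paper disposes of the first assertion by citing Ishi's classification (\cite[Theorem 5.3]{Ishi3}) and of the second by invoking Propositions~\ref{prop:17} and~\ref{prop:1b} or \cite[Theorem 4.6]{Ishi5}, proving explicitly only the intertwining identity (Lemma~\ref{lem:1}, a one-line computation from the relative invariance $t_*I^{-\vect s''}=\Delta^{-\vect s''}(t)I^{-\vect s''}$). You instead work everything out inside the Fourier model of Proposition~\ref{prop:1b}. For the first assertion your outline is sound: the ``only if'' direction via the support of the spectral measure of the restriction to the centre $F$ (distinct strata give distinct orbit closures, since each orbit is open in its closure), and the ``if'' direction via fibrewise multiplication by the square root of the Radon--Nikodym derivative, are both correct ideas. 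The deferred $T_-$-bookkeeping does close up: since $\vect s,\vect s'\succ_{\epsb}\tfrac12\vect m'^{(\epsb)}$ forces $\vect s'-\vect s$ to be supported on the indices with $\eps_j=1$, one has $\Delta^{\epsb(\vect s'-\vect s)/2}_{\epsb,*}(\lambda\cdot t)=\Delta^{(\vect s'-\vect s)/2}(t)\,\Delta^{\epsb(\vect s'-\vect s)/2}_{\epsb,*}(\lambda)$, which exactly compensates the change of character from $\Delta^{\vect s/2}$ to $\Delta^{\vect s'/2}$. This buys a self-contained proof at the cost of redoing computations the paper delegates to \cite{Ishi3,Ishi4,Ishi5}.

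There is, however, a concrete false step in your treatment of the second assertion: the claim that $\vect s,\,\vect s+2\vect s''\in\Gc_*(\Omega)$ with $\vect s''\in\N_\Omega^*$ forces $\vect s''\in\epsb\C^r$. Already for $r=1$ take $\vect s=0$ (stratum $\epsb=(0)$, so $I_*^{\vect s}=\delta_0$) and $\vect s''=1$: then $\vect s'=2\in\Gc_*(\Omega)$ but $\vect s''\neq\epsb\vect s''$. In that situation $\Delta_*^{\vect s''}$ vanishes identically on $\overline{\Omega_\epsb^*}$, i.e.\ on the support of $I_*^{\vect s}$, so your multiplier argument produces the zero map rather than a nonzero constant multiple of the density of $I_*^{\vect s+2\vect s''}$ (concretely, $\Ac_0=\C\chi_D$ and $f*I^{-1}=0$ there). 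This is exactly the dichotomy recorded in Corollary~\ref{cor:8}: the map $f\mapsto f*I^{-\vect s''}$ is a nonzero multiple of an isometric isomorphism precisely when $\vect s''=\epsb\vect s''$, and is zero otherwise. Your argument for the second assertion therefore needs this case split; as written, the sentence ``(comparing strata componentwise) $\vect s''\in\epsb\C^r$'' is wrong, and the subsequent identification of densities only holds in the non-degenerate case.
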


The first assertion is a particular case of~\cite[Theorem 5.3]{Ishi3}. The second assertion follows by means of Propositions~\ref{prop:17} and~\ref{prop:1b} or~\cite[Theorem 4.6]{Ishi5}. In fact, the following elementary lemma holds.

\begin{lem}\label{lem:1}
	Take $\vect s\in \R^r$ and $\vect{s'}\in \N_{\Omega}^*$. Then, for every $f\in \Hol(D)$ and for every $\phi\in G_T$,
	\[
	[\Uc_{\vect s+2 \vect{s'}}(\phi)](f*I^{-\vect{s'}})=(\Uc_{\vect s}(\phi) f)*I^{-\vect{s'}}.
	\]
\end{lem}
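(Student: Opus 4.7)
The plan is to use the semi-direct decomposition $G_T=\Nc\rtimes T_-$ and verify the identity separately on each factor, exploiting the fact that both sides, as functions of $\phi$, are defined by representation formulae that respect products: if the intertwining identity holds for $\phi=n\in \Nc$ and for $\phi=t\in T_-$, then applying it successively (using $\Uc_{\vect s}(nt)=\Uc_{\vect s}(n)\Uc_{\vect s}(t)$ on both sides) yields the statement for $\phi=nt$. So the verification reduces to two distinct computations.

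For $\phi=n\in\Nc$, recall from Lemma~\ref{lem:3} that every character $\Delta^{\vect s}$ is trivial on $\Nc$, so $\Uc_{\vect s}(n)$ and $\Uc_{\vect s+2\vect{s'}}(n)$ are both simply precomposition with $n^{-1}$. Since $n^{-1}$ acts by $(\zeta,z)\mapsto (\zeta-\zeta_0,\, z-x_0+i\Phi(\zeta_0)-2i\Phi(\zeta,\zeta_0))$ for $n=(\zeta_0,x_0)$, its effect on the second variable $z$ is an affine translation \emph{depending only on $\zeta$}. Because $*I^{-\vect{s'}}$ is a convolution purely in the $F$-variable (with $\zeta$ held fixed), it commutes with such $\zeta$-dependent translations in $z$, giving the identity on the nose.

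For $\phi=t\in T_-$, write $t$ acting on $D$ as $A\times B_\C$ as in Subsection~\ref{sec:1:1}, so that $\Uc_{\vect s}(t)f(\zeta,z)=\Delta^{-\vect s/2}(t)\,f(A^{-1}\zeta,B^{-1}_\C z)$. Computing $(f*I^{-\vect{s'}})(t^{-1}(\zeta,z))$ and changing variables in the convolution integral via $v\mapsto B^{-1}v$, I will use the quasi-invariance of the Riesz distribution under $T_-$: since $\Lc I^{-\vect{s'}}=\Delta_*^{\vect{s'}}$ and $\Delta_*^{\vect{s'}}(\xi\cdot t)=\Delta^{\vect{s'}}(t)\Delta_*^{\vect{s'}}(\xi)$, the pushforward of $I^{-\vect{s'}}$ under $B\,\cdot\,$ equals $\Delta^{\vect{s'}}(t)\,I^{-\vect{s'}}$. (Alternatively, one can read this directly off Lemma~\ref{lem:8} applied to the generating family $\Delta^{\vect s}$, or off part~(3) of Proposition~\ref{prop:17}.) This change of variables produces an extra factor $\Delta^{\vect{s'}}(t)$, and converting the resulting integrand back using $\Uc_{\vect s}(t)$ contributes an additional factor $\Delta^{\vect s/2}(t)$; together they yield
\[
(f*I^{-\vect{s'}})(t^{-1}(\zeta,z))=\Delta^{\vect s/2+\vect{s'}}(t)\,\bigl((\Uc_{\vect s}(t)f)*I^{-\vect{s'}}\bigr)(\zeta,z).
\]
Multiplying by $\Delta^{-(\vect s+2\vect{s'})/2}(t)$ on both sides collapses the prefactor to $1$ and delivers the desired equality.

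The main obstacle is purely bookkeeping: tracking the factors $\Delta^{\vect s/2}(t)$, $\Delta^{\vect{s'}}(t)$, and $\Delta^{-(\vect s+2\vect{s'})/2}(t)$ with the correct signs and halvings so they cancel exactly. There is no real analytic subtlety, since for $\vect{s'}\in\N_\Omega^*$ the distribution $I^{-\vect{s'}}$ is a constant-coefficient differential operator (its Laplace transform $\Delta_*^{\vect{s'}}$ being polynomial), and hence the convolution $f*I^{-\vect{s'}}$ is defined pointwise on all of $\Hol(D)$ without any integrability hypothesis.
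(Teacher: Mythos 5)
Your proposal is correct and follows essentially the same route as the paper's proof: reduce to the two factors of $G_T=\Nc\rtimes T_-$, observe that the $\Nc$-case is immediate because convolution with $I^{-\vect{s'}}$ commutes with the (fibrewise constant) translations in the $F_\C$-variable, and handle $t\in T_-$ via the relative invariance of the Riesz distribution, $t^* I^{-\vect{s'}}=\Delta^{-\vect{s'}}(t)\,I^{-\vect{s'}}$, whose factor cancels against the discrepancy between $\Delta^{-\vect s/2}(t)$ and $\Delta^{-(\vect s+2\vect{s'})/2}(t)$. The paper's proof is just a condensed version of your bookkeeping, and your sign conventions check out.
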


\begin{proof}
	The assertion is clear if $\phi\in \Nc$. Then, assume that $\phi=t\cdot$ for $t\in T_-$. Then,
	\[
	(f\circ \phi^{-1})* I^{-\vect{s'}}=[f*(t^* I^{-\vect{s'}})]\circ \phi^{-1}=\Delta^{-\vect{s'}}(t)(f* I^{-\vect{s'}})\circ \phi^{-1},
	\]
	so that the assertion follows. 
\end{proof}

Consequently, by means of Schur's lemma and Proposition~\ref{prop:2b} we get the following result. Notice that this result may also be obtained by means of Lemma~\ref{lem:8} (cf.~Subsection~\ref{sec:2:8}).

\begin{cor}\label{cor:8}
	Take $\epsb\in \Set{0,1}^r$, $\vect s\succ_{\epsb}\frac 1 2 \vect m'^{(\epsb)}$ and $\vect s'\in \N_{\Omega}^*$. Then, the following hold:
	\begin{itemize}
		\item if $\vect s+2 \vect s'\succ_{\epsb} \frac 1 2 \vect m'^{(\epsb)}$ (i.e., if $\vect s'=\epsb\vect s'$), then the mapping $f \mapsto f* I^{-\vect s'}$ is an isomorphism of $\Ac_{\vect s}$ onto $\Ac_{\vect s+2\vect s'}$;
		
		\item if $\vect s+2 \vect s'\not\succ_{\epsb} \frac 1 2 \vect m'^{(\epsb)}$ (i.e., if $\vect s'\neq\epsb\vect s'$), then $\Ac_{\vect s}* I^{-\vect s'}=0$.
	\end{itemize}
\end{cor}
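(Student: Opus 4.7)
The plan is to treat the two cases separately. Both rely on Proposition~\ref{prop:4b} (which characterises $\Ac_{\vect t}$ as the unique RKHS of holomorphic functions on $D$ carrying an irreducible $\Uc_{\vect t}$-representation) and Proposition~\ref{prop:2b} (which classifies the unitary equivalences among these representations by the strata of $\Gc_*(\Omega)$).

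For the first bullet, $\vect s'=\epsb\vect s'$ combined with $\vect s\succ_\epsb \frac 1 2\vect m'^{(\epsb)}$ yields $\vect s+2\vect s'\succ_\epsb \frac 1 2\vect m'^{(\epsb)}$, so Proposition~\ref{prop:17}(2) places $\vect s+2\vect s'$ in $\Gc_*(\Omega)$, in the same stratum as $\vect s$. Proposition~\ref{prop:2b} then asserts the unitary equivalence of $\Uc_{\vect s}$ on $\Ac_{\vect s}$ and $\Uc_{\vect s+2\vect s'}$ on $\Ac_{\vect s+2\vect s'}$, and its second part identifies the intertwiner as $f\mapsto f*I^{-\vect s'}$ (up to a nonzero scalar), giving the claimed isomorphism.

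For the second bullet I will argue via Schur's lemma. Observe first that, since $\vect s'\in \N^*_\Omega$, the Laplace transform $\Lc I^{-\vect s'}=\Delta_*^{\vect s'}$ is polynomial, so $I^{-\vect s'}$ is a tempered distribution supported at $0\in F$; hence convolution with $I^{-\vect s'}$ is a holomorphic partial differential operator in the $z$-variable, and $T\colon \Ac_{\vect s}\to \Hol(D)$, $T(f)\coloneqq f*I^{-\vect s'}$, is well defined and continuous. By Lemma~\ref{lem:1}, $T$ intertwines $\Uc_{\vect s}$ with $\Uc_{\vect s+2\vect s'}$; since $\Uc_{\vect s}$ is irreducible on $\Ac_{\vect s}$, the closed $\Uc_{\vect s}$-invariant subspace $\ker T$ must be either $\Set{0}$ or all of $\Ac_{\vect s}$.

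If $\ker T=\Ac_{\vect s}$, then $\Ac_{\vect s}*I^{-\vect s'}=0$ and we are done. Assume instead $\ker T=\Set{0}$: transporting the norm of $\Ac_{\vect s}$ to $T(\Ac_{\vect s})\subseteq \Hol(D)$ via $T$ produces a Hilbert space continuously embedded in $\Hol(D)$---hence an RKHS---on which $\Uc_{\vect s+2\vect s'}$ acts unitarily. The converse part of Proposition~\ref{prop:4b} then forces $\vect s+2\vect s'\in \Gc_*(\Omega)$ with $T(\Ac_{\vect s})=\Ac_{\vect s+2\vect s'}$, and Proposition~\ref{prop:2b} forces $\vect s$ and $\vect s+2\vect s'$ to lie in a common stratum, i.e.\ $\vect s'=\epsb\vect s'$---contradicting the Case~2 hypothesis. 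The main technical step is the identification of $I^{-\vect s'}$ as a differential operator, which is immediate once $\Delta_*^{\vect s'}$ is recognised as a polynomial; everything else is an application of Schur's lemma together with the uniqueness clauses of Propositions~\ref{prop:4b} and~\ref{prop:2b}.
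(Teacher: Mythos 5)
Your argument is correct and follows essentially the same route the paper indicates: Schur's lemma applied to the intertwiner $f\mapsto f*I^{-\vect s'}$ (whose kernel is closed and $\Uc_{\vect s}$-invariant, hence trivial or everything by the irreducibility in Proposition~\ref{prop:4b}), combined with the uniqueness clause of Proposition~\ref{prop:4b} and the stratification/equivalence criterion of Proposition~\ref{prop:2b} — noting that the disjointness in Proposition~\ref{prop:17}(2) pins down the stratum $\epsb$ uniquely, so that landing in a common stratum indeed forces $\vect s'=\epsb\vect s'$. The paper also mentions an alternative via the explicit formula of Lemma~\ref{lem:8} applied to the kernels $B^{-\vect s}$, but that is not the route you (or the paper's primary suggestion) take.
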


\subsection{Invariant Quotient Spaces}

\begin{deff}
For every $\vect s\in \R^r$ and for every $\vect{s'}\in \N_{\Omega}^*$ such that $\vect s+2 \vect{s'}\in\Gc_*(\Omega)$, we  define
\[
\Ac_{\vect s,\vect{s'}}\coloneqq \Set{f\in \Hol(D)\colon f*I^{-\vect{s'}}\in \Ac_{\vect s+2\vect{s'}}},
\]
endowed with the corresponding Hilbert seminorm. We define   $\widehat\Ac_{\vect s,\vect{s'}}$ as the Hausdorff space associated with $\Ac_{\vect s,\vect{s'}}$, that is, $\Ac_{\vect s,\vect{s'}}/\ker(\,\cdot\, *I^{-\vect{s'}})$.
\end{deff}

As a consequence of Lemma~\ref{lem:1} and~\cite[Theorem 9.4]{Treves}, we have the following result.

\begin{prop}
	Take $\vect s\in \R^r$ and $\vect s'\in \N^*_{\Omega}$ such that $\vect s+2 \vect s'\in \Gc_*(\Omega)$. Then, $\Ac_{\vect s,\vect s'}$ is a semi-Hilbert space, and  $\Uc_{\vect s}$ induces an isometric irreducible representation of $G_T$ in $\Ac_{\vect s,\vect s'}$.
\end{prop}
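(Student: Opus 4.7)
\medskip
\noindent\textbf{Proof plan.}

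The strategy is to realize $\Ac_{\vect s,\vect s'}$ as the preimage of the RKHS $\Ac_{\vect s+2\vect s'}$ under the holomorphic differential operator $\pi\colon\Hol(D)\to \Hol(D)$ defined by $\pi f\coloneqq f\ast I^{-\vect s'}$, and to transfer the Hilbert structure and representation-theoretic properties of $\Ac_{\vect s+2\vect s'}$ back along $\pi$. Since $\vect s'\in \N_\Omega^*$, the Riesz distribution $I^{-\vect s'}$ is supported at the origin, so $\pi$ is a well-defined continuous linear endomorphism of the Fr\'echet space $\Hol(D)$, and by Lemma~\ref{lem:1} it intertwines $\Uc_{\vect s}(\phi)$ with $\Uc_{\vect s+2\vect s'}(\phi)$ for every $\phi\in G_T$. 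Hence $\Ac_{\vect s,\vect s'}=\pi^{-1}(\Ac_{\vect s+2\vect s'})$ is $\Uc_{\vect s}$-stable; since $\Uc_{\vect s+2\vect s'}$ is unitary on $\Ac_{\vect s+2\vect s'}$ by Proposition~\ref{prop:4b}, unwinding the intertwining yields
$
\norm{\Uc_{\vect s}(\phi)f}_{\Ac_{\vect s,\vect s'}}
=\norm{\Uc_{\vect s+2\vect s'}(\phi)(\pi f)}_{\Ac_{\vect s+2\vect s'}}
=\norm{\pi f}_{\Ac_{\vect s+2\vect s'}}
=\norm{f}_{\Ac_{\vect s,\vect s'}},
$
establishing isometric invariance.

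For the semi-Hilbert property, one observes that $\pi$ descends to an injective isometry $\widehat\pi\colon \widehat\Ac_{\vect s,\vect s'}\hookrightarrow \Ac_{\vect s+2\vect s'}$, whose image $V\coloneqq \pi(\Ac_{\vect s,\vect s'})$ is $\Uc_{\vect s+2\vect s'}$-invariant. By Proposition~\ref{prop:4b} the representation $\Uc_{\vect s+2\vect s'}$ on $\Ac_{\vect s+2\vect s'}$ is irreducible, and $V\neq \Set{0}$: indeed, since $B^{-(\vect s+\vect s')}_{(\zeta',z')}\in \Hol(D)$ and Lemma~\ref{lem:8} gives $\pi B^{-(\vect s+\vect s')}_{(\zeta',z')}$ as a (generically) nonzero multiple of $B^{-(\vect s+2\vect s')}_{(\zeta',z')}\in \Ac_{\vect s+2\vect s'}$, the kernel element $B^{-(\vect s+\vect s')}_{(\zeta',z')}$ lies in $\Ac_{\vect s,\vect s'}$ and is mapped by $\pi$ to a nonzero element. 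Thus the closure $\overline V$ is a nonzero closed invariant subspace of $\Ac_{\vect s+2\vect s'}$, hence $\overline V=\Ac_{\vect s+2\vect s'}$. Completeness of the seminorm on $\Ac_{\vect s,\vect s'}$ amounts to the closedness $V=\Ac_{\vect s+2\vect s'}$; this is precisely where Treves' Theorem~9.4 enters: applied to the continuous linear map $\pi\colon \Hol(D)\to \Hol(D)$ between Fr\'echet spaces, together with the continuous embedding $\Ac_{\vect s+2\vect s'}\hookrightarrow \Hol(D)$, it yields that the pullback of a Hilbert-Banach structure under $\pi$ produces a complete seminorm on $\pi^{-1}(\Ac_{\vect s+2\vect s'})$, equivalently that $V$ is closed, and therefore (by the density already shown) $V=\Ac_{\vect s+2\vect s'}$.

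Once $\widehat\pi\colon \widehat\Ac_{\vect s,\vect s'}\to \Ac_{\vect s+2\vect s'}$ is established as a surjective isometry intertwining the two $G_T$-actions, irreducibility of $\Uc_{\vect s}$ on $\Ac_{\vect s,\vect s'}$ follows transparently: every closed $\Uc_{\vect s}$-invariant subspace $W\subseteq \Ac_{\vect s,\vect s'}$ descends through $\widehat\pi$ to a closed $\Uc_{\vect s+2\vect s'}$-invariant subspace of $\Ac_{\vect s+2\vect s'}$, which by Proposition~\ref{prop:4b} must be $\Set{0}$ or the whole space; pulling back, $W$ coincides either with the closure of $\Set{0}$ (i.e.\ $\ker\pi\cap \Ac_{\vect s,\vect s'}$) or with $\Ac_{\vect s,\vect s'}$ itself, which is the natural notion of irreducibility in the semi-Hilbert setting.

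The main obstacle in this program is the completeness step: the invariance-irreducibility argument only produces density of $V$ in $\Ac_{\vect s+2\vect s'}$, and passing from density to actual surjectivity of $\pi$ onto $\Ac_{\vect s+2\vect s'}$ is a genuinely nontrivial fact about the constant-coefficient holomorphic differential operator $\pi$, whose treatment is exactly the role played by the Fr\'echet-space argument supplied by~\cite[Theorem~9.4]{Treves}.
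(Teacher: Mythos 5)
Your proposal is correct and follows exactly the route the paper intends: Lemma~\ref{lem:1} gives the intertwining (hence isometric invariance and transfer of irreducibility from Proposition~\ref{prop:4b}), and \cite[Theorem 9.4]{Treves} gives surjectivity of $f\mapsto f*I^{-\vect s'}$ on $\Hol(D)$, which yields completeness. The only remark is that your density argument (via the reproducing kernels and Lemma~\ref{lem:8}) is superfluous: since Treves' theorem already makes $\pi$ surjective onto all of $\Hol(D)$, one gets $\pi\bigl(\pi^{-1}(\Ac_{\vect s+2\vect s'})\bigr)=\Ac_{\vect s+2\vect s'}$ directly, with no need to first establish that the image is dense.
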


Notice that the spaces $\widehat \Ac_{\vect s,\vect{s'}}$ for different $\vect{s'}$ need \emph{not} be isomorphic, in general. They are \emph{naturally} isomorphic if (and only if) $\vect s+2\vect{s'}\succ_{\epsb} \frac 1 2 \vect m'^{(\epsb)}$ for some \emph{fixed} $\epsb\in \Set{0,1}^r$, in which case there is a unique isomorphism (up to a scalar multiple) which commutes with $\Uc_{\vect s}$, thanks to Propositions~\ref{prop:4b} and~\ref{prop:2b}.

\begin{prop}\label{cor:1}
	Take $\vect s\in \R^r$ and let $H$ be a   semi-Hilbert space of holomorphic functions on $D$. Assume that the following hold:
	\begin{itemize}
		\item there is $\vect{s'}\in \N_{\Omega}^*$ such that the canonical mapping $H\to \Hol(D)/\ker (\,\cdot\,* I^{-\vect{s'}})$ is continuous and non-trivial;
		
		\item  $\Uc_{\vect s} $ induces a bounded (resp.\ isometric) representation of $G_T$ in $H$.
	\end{itemize} 
	Then, $\vect s+2\vect{s'}\in \Gc_*(\Omega)$, $	H\subseteq \Ac_{\vect s,\vect{s'}}$ continuously, 
	and the canonical mapping $H/(H\cap \ker (\,\cdot\,* I^{-\vect{s'}}))\to \widehat \Ac_{\vect s,\vect{s'}} $ is an isomorphism (resp.\ a multiple of an isometry). 
\end{prop}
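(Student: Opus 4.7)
The plan is to transfer the problem to the reproducing kernel setting handled by Proposition~\ref{prop:4b}, using the convolution operator $T\colon f\mapsto f*I^{-\vect s'}$ on $\Hol(D)$. Since $\vect s'\in \N_\Omega^*$, Lemma~\ref{lem:8} shows that $I^{-\vect s'}$ acts essentially as a constant coefficient differential operator, hence $T$ is continuous on $\Hol(D)$ and its kernel $V\coloneqq \ker T$ is closed. By Lemma~\ref{lem:1}, $T$ intertwines $\Uc_{\vect s}$ with $\Uc_{\vect s+2\vect s'}$, so $V$ is $\Uc_{\vect s}$-invariant, and $T$ induces a continuous injection $\Hol(D)/V\hookrightarrow \Hol(D)$. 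The hypothesis that $H\to \Hol(D)/V$ is continuous forces $H\cap V$ to be closed in $H$ and to contain the null space $N$ of the seminorm of $H$ (since $V$ is closed in $\Hol(D)$, the origin of $\Hol(D)/V$ is closed, so its preimage in $H$ is closed and therefore contains $\overline{\{0\}}=N$). Consequently $\widehat H\coloneqq H/(H\cap V)$ is a Hilbert space on which $\Uc_{\vect s}$ acts boundedly (resp.\ isometrically).

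Next I would define $\Psi\colon \widehat H\to \Hol(D)$ by $\Psi(f+H\cap V)\coloneqq Tf$. It is well-defined, injective (by construction of $V$), continuous (as the composition of the continuous map $\widehat H\to \Hol(D)/V$ with the continuous injection $\Hol(D)/V\hookrightarrow \Hol(D)$), non-zero by the non-triviality hypothesis, and intertwines $\Uc_{\vect s}$ with $\Uc_{\vect s+2\vect s'}$ by Lemma~\ref{lem:1}. Transporting the Hilbert structure of $\widehat H$ along $\Psi$, the image $H'\coloneqq \Psi(\widehat H)$ becomes a Hilbert space continuously embedded into $\Hol(D)$, i.e., a (non-trivial) RKHS, on which $\Uc_{\vect s+2\vect s'}$ induces a bounded (resp.\ isometric) representation. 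Proposition~\ref{prop:4b} then yields at once that $\vect s+2\vect s'\in \Gc_*(\Omega)$ and that $H'=\Ac_{\vect s+2\vect s'}$ with equivalent (resp.\ proportional) norms.

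To conclude, unwinding the definitions: $f\in H$ implies $Tf=\Psi(f+H\cap V)\in \Ac_{\vect s+2\vect s'}$, that is, $H\subseteq \Ac_{\vect s,\vect s'}$, and the comparison of norms from Proposition~\ref{prop:4b} combined with the continuity of $H\to\widehat H$ gives the continuous (resp.\ isometric up to a scalar) embedding $H\hookrightarrow \Ac_{\vect s,\vect s'}$. By the very definition of $\widehat \Ac_{\vect s,\vect s'}$, the convolution $T$ identifies $\widehat \Ac_{\vect s,\vect s'}$ with $\Ac_{\vect s+2\vect s'}$, and the canonical map $H/(H\cap \ker(\,\cdot\,*I^{-\vect s'}))\to \widehat \Ac_{\vect s,\vect s'}$ is precisely $\Psi$ followed by this identification, hence an isomorphism (resp.\ a multiple of an isometry).

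The point requiring most care is the justification that $\widehat H$ is actually a Hilbert space and that $H'$ is honestly a RKHS rather than merely a continuously embedded semi-Hilbert space; both reduce to the closedness of $V$ inside $\Hol(D)$ and the completeness of $\widehat H$, and everything else is formal once these are in place.
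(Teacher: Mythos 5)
Your argument is correct and is essentially the paper's own proof written out in detail: the paper simply cites Proposition~\ref{prop:4b}, Lemma~\ref{lem:1}, and the remark that the hypothesis is equivalent to continuity and non-triviality of $f\mapsto f*I^{-\vect{s'}}$ on $H$, which is exactly the reduction you carry out by transporting the quotient Hilbert structure along $\Psi$. The only cosmetic difference is that the paper establishes this reduction via the open mapping theorem and the surjectivity of $f\mapsto f*I^{-\vect{s'}}$ on $\Hol(D)$ (Trèves), whereas you only need the continuous injection $\Hol(D)/\ker(\,\cdot\,*I^{-\vect{s'}})\hookrightarrow\Hol(D)$, which suffices for the direction used here.
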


Observe that   the canonical mapping $H\to \Hol(D)/\ker (\,\cdot\,* I^{-\vect{s'}})$ is continuous and non-trivial if and only if the mapping $H\ni f \mapsto f *I^{-\vect s'}\in \Hol(D)$ is continuous and non-trivial, since the mapping $f \mapsto f *I^{-\vect s'}$ induces a strict morphism of $\Hol(D)$ \emph{onto} $\Hol(D)$, by the open mapping theorem (use~\cite[Theorem 9.4]{Treves} to prove surjectivity).

\begin{proof}
	This is a consequence of Proposition~\ref{prop:4b} and Lemma~\ref{lem:1}, and of the above remark.
\end{proof}

\subsection{Affinely Invariant Spaces on $D$}\label{sec:3}

We shall now look for $\Aff$-$\Uc_{\lambda\vect 1_r}$-invariant spaces of holomorphic functions.
We shall also consider the following (ray) representations of $G(D)$, which will be the main object of study in the next section.

\begin{deff}\label{def:1b}
	We define, for every $\lambda\in \R$, a representation $\widetilde U_\lambda$ of the universal covering group $\widetilde G$ of $G_0(D)$ so that
	\[
	\widetilde U_\lambda(\phi) f= (f\circ \phi^{-1}) (J\phi^{-1})^{\lambda/p}
	\]
	for every $\phi\in \widetilde G$ and for every $f\in \Hol(D)$, where $p=(n+2m)/r$ is the genus of $D$,  with the conventions described in the Introduction.
	
	We shall also consider the ray representation (cf.~\cite{Bargmann}) $U_\lambda$ of $G(D)$ into $\Lin(\Hol(D))/\T$ defined by
	\[
	U_\lambda(\phi) f= (f\circ \phi^{-1}) (J\phi^{-1})^{\lambda/p}
	\]
	for every $\phi\in G(D)$ and for every $f\in\Hol(D)$.  
\end{deff}

Note that $U_\lambda(\phi)$ may \emph{not} be defined as an ordinary representation of $G(D)$ in $\Hol(D)$ unless $\lambda/p\in \Z$: even though $J\phi^{-1}$ is a nowhere vanishing holomorphic function, so that $(J\phi^{-1})^{\lambda/p}$ may be defined on the \emph{convex} domain $D$, the function $(J\phi^{-1})^{\lambda/p}$ is uniquely defined only up to the multiplication by a power of $\ee^{2 \pi (\lambda/p) i}$. Since, however, these powers are unimodular,  we may still define $U_\lambda$ as a ray representation. In particular, we may say that $U_\lambda$ is bounded or isometric (in a semi-Hilbert space) unambiguously.

In addition, observe that $\abs{U_\lambda(\phi)f}=\abs{\Uc_{\lambda \vect 1_r} (\phi)f}$ for every $\phi\in \Aff$ and for every $f\in \Hol(D)$.

\begin{deff}\label{def:4}
	We denote by $\Wc(\Omega)\coloneqq \Set{\lambda\in \R\colon \lambda \vect 1_r\in \Gc(\Omega')}=\Set{ja/2\colon j=0,\dots, r-1}\cup (m/r-1,+\infty)$ the Wallach set of $\Omega$.

	We shall simply write $\Ac_{\lambda, \lambda'}$ instead of $\Ac_{\lambda \vect 1_r, \lambda' \vect 1_r}$ for every $\lambda\in \R$ and for every $\lambda'\in \N$ such that $\lambda+ 2\lambda'\in \Wc(\Omega)$. We denote by $\widehat \Ac_{\lambda,\lambda'}$ the corresponding Hausdorff space. In addition, we also write $\Ac_\lambda$ instead of $\Ac_{\lambda,0}$.
	We denote by $\square$ the differential operator given by convolution with $I^{-\vect 1_r}_\Omega$, so that $\Ac_{\lambda,\lambda'}=\Set{f\in \Hol(D)\colon \square^{\lambda'} f\in \Ac_{\lambda+2\lambda'}}$ for every $\lambda,\lambda'$ as above.
\end{deff}

We observe explicitly that $\square$ is $K_\Aff$-invariant by Lemma~\ref{lem:6}, where $K_\Aff$ denotes the (compact) stabilizer of $(0, i e_\Omega)$ in $GL(D)$ (or, equivalently, in $\Aff$, cf.~\cite[Theorem 1.13]{Kaneyuki}).

\begin{prop}\label{prop:19}
	Take $\lambda\in \R$ and $\lambda'\in \N$ such that $\lambda+2 \lambda'\in \Wc(\Omega)$. Then, $\Ac_{\lambda,\lambda'}$ is  $\Aff$-$\Uc_{\lambda\vect 1_r}$-invariant with its seminorm.  If, in addition, $\lambda'=0$, then $\Ac_\lambda$ is $U_\lambda$-invariant with its norm.
\end{prop}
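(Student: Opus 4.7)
The plan is to reduce the first statement to $G_T$-invariance (already supplied by the preceding Proposition) plus invariance under the compact stabilizer $K_\Aff$ of $(0,ie_\Omega)$, using the factorization $\Aff=G_T K_\Aff$. For the $K_\Aff$ part, I first observe that every $k\in K_\Aff\subset GL(D)$ has the form $A\times B_\C$ with $B$ a Jordan-algebra automorphism of $F$ fixing $e_\Omega$, and that by Lemma~\ref{lem:6} the positive character $\Delta^{\lambda\vect 1_r}$ restricts trivially to $K_\Aff$ (a compact group); hence $\Uc_{\lambda\vect 1_r}(k)$ acts simply by pullback. A direct computation using $B_\C^{-1}\Phi(\zeta,\zeta')=\Phi(A^{-1}\zeta,A^{-1}\zeta')$ and the invariance of the (analytically continued) Jordan determinant $\Delta^{\vect 1_r}$ under its automorphism group shows that
\[
B^{-(\lambda+2\lambda')\vect 1_r}_{k^{-1}(\zeta',z')}(k^{-1}(\zeta,z))=B^{-(\lambda+2\lambda')\vect 1_r}_{(\zeta',z')}(\zeta,z),
\]
and by the discussion of Subsection~\ref{sec:2:8} this is equivalent to $K_\Aff$ acting unitarily on $\Ac_{\lambda+2\lambda'}$.

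I then transfer this to $\Ac_{\lambda,\lambda'}$ by using that $\square^{\lambda'}$ commutes with the pullback action of $K_\Aff$ (as recorded after Definition~\ref{def:4}), together with the identity from Lemma~\ref{lem:1} that $\square^{\lambda'}$ intertwines $\Uc_{\lambda\vect 1_r}$ and $\Uc_{(\lambda+2\lambda')\vect 1_r}$ on $G_T$. Combined with the preceding Proposition this yields the isometric $\Aff$-$\Uc_{\lambda\vect 1_r}$-invariance of $\Ac_{\lambda,\lambda'}$.

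For the second statement, Proposition~\ref{prop:15}(2) guarantees that $G(D)$ is generated by $\Aff$ and the Cayley-type involution $\iota$, so in view of the first part (with $\lambda'=0$) only $\iota$-invariance remains; as observed in Subsection~\ref{sec:2:8}, this reduces to the kernel identity $[U_\lambda(\iota)\otimes \overline{U_\lambda(\iota)}]B^{-\lambda\vect 1_r}=B^{-\lambda\vect 1_r}$ up to a unimodular factor. I obtain this by raising the unweighted Bergman kernel transformation~\eqref{eq:2b} for $\phi=\iota$ to the power $\lambda/p$: the explicit formula $(J\iota)(\zeta,z)=(-1)^n i^n\Delta^{-p\vect 1_r}(z)$ from Proposition~\ref{prop:15}(3) makes the fractional power $(J\iota)^{\lambda/p}$ well-defined on the convex domain $D$ up to a global unimodular constant, and $(B^{-p\vect 1_r})^{\lambda/p}=B^{-\lambda\vect 1_r}$ by analytic continuation from $\Omega$.

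The main technical point is controlling the unimodular phases that appear in the second part when $\lambda/p\notin\Z$: strict equality of the transformed and original kernels as honest holomorphic functions may fail by such a constant. This is exactly why $U_\lambda$ is formulated as a ray representation, and what ``$U_\lambda$-invariant with its norm'' means is precisely that the (phase-ambiguous) operator on $\Ac_\lambda$ is unitary up to phase---a condition that is unaffected by the unimodular ambiguity and is therefore delivered directly by the fractional-power argument above.
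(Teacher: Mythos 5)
Your proof is correct. For the second assertion you do essentially what the paper does: raise the Bergman--kernel transformation law~\eqref{eq:2b} to the power $\lambda/p$, conclude that $[U_\lambda(\phi)\otimes\overline{U_\lambda(\phi)}]B^{-\lambda\vect 1_r}=c_\phi B^{-\lambda\vect 1_r}$ for a unimodular constant $c_\phi$, and observe that this phase is invisible to the ray-representation formalism (the paper does this uniformly for all $\phi\in G(D)$ rather than only for the generator $\iota$, but the two are equivalent since unitarity-up-to-phase is stable under composition). Where you diverge is in the first assertion. The paper gets the $\Aff$-$\Uc_{\lambda\vect 1_r}$-invariance of $\Ac_\lambda$ for free from the $G(D)$-statement, using only that $\abs{U_\lambda(\phi)f}=\abs{\Uc_{\lambda\vect 1_r}(\phi)f}$ for affine $\phi$, and then passes to $\lambda'>0$ via Lemma~\ref{lem:7}. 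You instead rebuild the affine invariance from the decomposition $\Aff=G_T K_\Aff$: Proposition~\ref{prop:4b} for the triangular part, and a direct computation for the compact stabilizer resting on the facts that each $k=A\times B_\C\in K_\Aff$ has $B$ a Jordan-algebra automorphism fixing $e_\Omega$ (hence preserving ${\det}_F$ and the kernel $B^{-c\vect 1_r}$) and that a positive character of a compact group is trivial. This is sound --- it is in substance the same mechanism the paper uses to prove Lemma~\ref{lem:7} itself, namely the $K_\Aff$-invariance of $\square$ via Lemma~\ref{lem:6} --- but it duplicates work: once the $G(D)$ ray-invariance of the kernel is established, the affine case requires no separate kernel computation. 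What your route buys is a self-contained treatment of the affine statement that never invokes~\eqref{eq:2b} beyond the affine subgroup; what the paper's route buys is brevity, since a single power of~\eqref{eq:2b} settles both assertions at once.
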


Before we pass to the proof, we need a simple extension of Lemma~\ref{lem:1}.

\begin{lem}\label{lem:7}
	Take $\lambda\in \R$ and $\lambda'\in \N$. Then,  for every $f\in \Hol(D)$ and for every $\phi\in \Aff$,
	\[
	[\Uc_{(\lambda+2 \lambda')\vect 1_r}(\phi)](\square^{\lambda'} f)= \square^{\lambda'}(\Uc_{\lambda\vect 1_r}(\phi)f).
	\]
\end{lem}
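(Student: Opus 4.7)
The plan is to reduce the identity on $\Aff$ to two cases: $\phi\in G_T$, which is covered by Lemma~\ref{lem:1}, and $\phi\in K_\Aff$ (the stabilizer of $(0,ie_\Omega)$ in $\Aff$), where the identity encodes the $K_\Aff$-invariance of $\square$ already noted after Definition~\ref{def:4}. Recall from the proof of Lemma~\ref{lem:6} that $\Aff=G_T K_\Aff$, so handling these two classes of elements and invoking multiplicativity of $\Uc_{\lambda\vect 1_r}$ and $\Uc_{(\lambda+2\lambda')\vect 1_r}$ suffices.

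For $\phi\in G_T$, I apply Lemma~\ref{lem:1} with $\vect s=\lambda\vect 1_r$ and $\vect{s'}=\lambda'\vect 1_r$; this is legitimate since $\lambda'\vect 1_r\in\N_\Omega^*$ (being a multiple of $\vect 1_r=\sigma(\vect 1_r)$), and $\square^{\lambda'}$ is by definition convolution with $I^{-\lambda'\vect 1_r}$. This immediately yields
\[
[\Uc_{(\lambda+2\lambda')\vect 1_r}(\phi)](\square^{\lambda'} f)=\square^{\lambda'}(\Uc_{\lambda\vect 1_r}(\phi)f)\qquad(\phi\in G_T).
\]

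For $\phi\in K_\Aff$, I first observe that $K_\Aff$ is compact and contained in $GL(D)$ (cf.\ the proof of Lemma~\ref{lem:6}), so $\phi\mapsto \abs{J\phi^{-1}}$ is a continuous homomorphism from the compact group $K_\Aff$ to $\R_+^*$, hence identically $1$. Therefore $\Uc_{\mu\vect 1_r}(\phi)f=f\circ \phi^{-1}$ for every $\mu\in\R$ and every $\phi\in K_\Aff$. The required identity reduces to $\square^{\lambda'}(f\circ\phi^{-1})=(\square^{\lambda'}f)\circ\phi^{-1}$, which is exactly the $K_\Aff$-invariance of $\square^{\lambda'}$ (noted after Definition~\ref{def:4}).

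Finally, writing a general $\phi\in\Aff$ as $\phi=gk$ with $g\in G_T$ and $k\in K_\Aff$, I combine the two cases: applying the $K_\Aff$-step to $k$ and the $G_T$-step to $g$ gives the claim for arbitrary $\phi\in\Aff$. There is no real obstacle here; the only subtle point is the verification that the positive character $\abs{J\phi^{-1}}$ is trivial on $K_\Aff$, which is immediate from compactness.
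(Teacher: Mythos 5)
Your proof is correct and follows essentially the same route as the paper's: decompose $\Aff=G_T K_\Aff$ (the paper uses $K_\Aff G_T$, which is immaterial), handle the $G_T$ factor via Lemma~\ref{lem:1} with $\vect s=\lambda\vect 1_r$, $\vect s'=\lambda'\vect 1_r$, and the $K_\Aff$ factor via the $K_\Aff$-invariance of $\square$. Your explicit verification that the positive character $\phi\mapsto\abs{J\phi^{-1}(0,0)}^{\lambda/p}$ is trivial on the compact group $K_\Aff$ is a point the paper leaves implicit, and it is a worthwhile addition.
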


\begin{proof}
	Observe first that $\Aff=K_\Aff G_T=G_T K_\Aff$. Then, for every $k\in K_\Aff$ and for every $\phi\in G_T$,
	\[
	\begin{split}
		[\Uc_{(\lambda+2 \lambda')\vect 1_r}(k\phi)](\square^{\lambda'} f)&=[\Uc_{(\lambda+2 \lambda')\vect 1_r}(k)\Uc_{(\lambda+2 \lambda')\vect 1_r}(\phi)](\square^{\lambda'} f)\\
			&=[\square^{\lambda'}(\Uc_{\lambda\vect 1_r}(\phi)f)]\circ k^{-1}\\
			&=\square^{\lambda'}(\Uc_{\lambda\vect 1_r}(k\phi)f)
	\end{split}
	\]
	by Lemma~\ref{lem:1} and the $K_\Aff$-invariance of $\square$, which follows from Lemma~\ref{lem:6}.
\end{proof}

\begin{proof}[Proof of Proposition~\ref{prop:19}.]
	The case $\lambda'>0$  follows from the case $\lambda'=0$ and Lemma~\ref{lem:7}. For what concerns the case $\lambda'=0$, observe that $[U_p(\phi)\otimes \overline{U_p(\phi)}] B^{-p \vect 1_r}=B^{-p \vect 1_r}$ for every $\phi\in G(D)$ by~\eqref{eq:2b}. Taking powers, we then see that  $[U_\lambda(\phi)\otimes \overline{U_\lambda(\phi)}] B^{-\lambda \vect 1_r}= c_\phi B^{-\lambda \vect 1_r}$ for some unimodular constant $c_\phi$, and for every $\phi\in G(D)$. Thus, $\Ac_\lambda$ is $U_\lambda$-invariant with its norm. Since $\abs{U_\lambda(\phi)f}=\abs{\Uc_{\lambda \vect 1_r}(\phi)f}$, this also proves that $\Ac_\lambda$ is $\Aff$-$\Uc_{\lambda \vect 1_r}$-invariant with its norm.
\end{proof}

\subsection{The Case of Tube Domains}\label{sec:3:2}

In this subsection, we assume that $D$ is an irreducible symmetric  \emph{tube} domain. Before stating our main results, we need some preliminaries.

Recall that we denote by $G(\Omega)$ the group of linear automorphisms of $\Omega$, and by $G_0(\Omega)$ the component of the identity in $G(\Omega)$. We shall denote by $K$ the stabilizer of $e_\Omega$ in $G(\Omega)$, and by $K_0$ its component of the identity, so that $K_0=K\cap G_0(\Omega)$.

\begin{deff}
Denote by $\Pc_{\vect s}$ the $G_0(\Omega)$-invariant (under composition) subspace of the space of holomorphic polynomials $\Pc$ on $F_\C$ generated by $\Delta^{\vect s}$, for every $\vect s\in \N_\Omega$. 
\end{deff}

\begin{prop}\label{prop:13}
	For every $\vect s\in \N_\Omega$, $\Pc_{\vect s}$ is $G(\Omega)$-invariant. In addition, $\Pc=\bigoplus_{\vect s\in \N_\Omega} \Pc_{\vect s}$ and every $G_0(\Omega)$-invariant  vector subspace of $\Pc$ is the sum of the $\Pc_{\vect s}$ that it contains (and is therefore $G(\Omega)$-invariant).
\end{prop}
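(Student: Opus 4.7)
The plan is to combine the Peter--Weyl-type decomposition of $\Pc$ under the compact group $K_0$ already contained in \cite[Theorem~XI.2.4]{FarautKoranyi} with a direct verification that the full stabilizer $K$ of $e_\Omega$ in $G(\Omega)$ also preserves each $\Pc_{\vect s}$; the upgrade to $G(\Omega)$-invariance will then follow from an Iwasawa-type identity $G(\Omega)=K\cdot T_-$ together with $T_-\subseteq G_0(\Omega)$.

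Concretely, I would first invoke \cite[Theorem~XI.2.4]{FarautKoranyi} to obtain the Fischer-orthogonal decomposition $\Pc=\bigoplus_{\vect s\in\N_\Omega}\Pc_{\vect s}$ into pairwise inequivalent $K_0$-irreducible subspaces. To establish the $K$-invariance of $\Pc_{\vect s}$, I would fix $k\in K$, recall that $K$ acts on $F$ by Jordan algebra automorphisms (\cite[Theorem~III.5.1]{FarautKoranyi}), and observe, by a direct unraveling of the definition of the generalized power function, that
\[
\Delta^{\vect s}\circ k^{-1}=\Delta^{\vect s}_{(k e_1,\dots,k e_r)}.
\]
By the transitivity of $K_0$ on ordered Jordan frames (the argument behind Proposition~\ref{prop:3b}, relying on \cite[Corollary~IV.2.7]{FarautKoranyi}), there exists $k_0\in K_0$ with $k_0 e_j=k e_j$ for every $j$, which yields $\Delta^{\vect s}\circ k^{-1}=\Delta^{\vect s}\circ k_0^{-1}\in \Pc_{\vect s}$. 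Since $G_0(\Omega)$ is normal in $G(\Omega)$, the right translate $\Pc_{\vect s}\circ k$ is itself $G_0(\Omega)$-invariant and generated, as such, by $\Delta^{\vect s}\circ k$; it is therefore contained in $\Pc_{\vect s}$, and equality follows by dimension. Because $T_-\subseteq G_0(\Omega)$ acts simply transitively on $\Omega$, every $g\in G(\Omega)$ factors as $g=kt$ with $k\in K$ and $t\in T_-$, so $\Pc_{\vect s}$ is $G(\Omega)$-invariant.

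For the remaining assertions, the sum decomposition is once again \cite[Theorem~XI.2.4]{FarautKoranyi}. If $V$ is any $G_0(\Omega)$-invariant vector subspace of $\Pc$, then $V$ is graded by degree (the homotheties $\R^*_+\cdot\mathrm{id}\subseteq G_0(\Omega)$ act semisimply on $\Pc$ with distinct eigenvalues on distinct $\Pc_d$), and each finite-dimensional graded piece $V\cap\Pc_d$ is a $K_0$-invariant subspace of $\bigoplus_{|\vect s|=d}\Pc_{\vect s}$; by complete reducibility for the compact group $K_0$ and the mutual inequivalence of the $\Pc_{\vect s}$'s, this intersection is a direct sum of some of the $\Pc_{\vect s}$'s with $|\vect s|=d$. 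Summing over $d$, $V=\bigoplus_{\Pc_{\vect s}\subseteq V}\Pc_{\vect s}$, and its $G(\Omega)$-invariance then follows from that of each $\Pc_{\vect s}$.

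The main obstacle is the $K$-invariance step: the delicate point is to reinterpret $\Delta^{\vect s}\circ k^{-1}$ as a generalized power function for the rotated frame $(k e_j)$, and then to exploit transitivity of $K_0$ on ordered Jordan frames to absorb that rotation into a $K_0$-action, thereby showing that $\Delta^{\vect s}\circ k^{-1}$ already lies in $\Pc_{\vect s}$. All remaining steps reduce to combining the Faraut--Koranyi decomposition with the normality of $G_0(\Omega)$ in $G(\Omega)$.
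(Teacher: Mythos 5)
Your argument for the $G(\Omega)$-invariance of the $\Pc_{\vect s}$ is correct and takes a genuinely different, more self-contained route than the paper's: you identify $\Delta^{\vect s}\circ k^{-1}$ with $\Delta^{\vect s}_{(ke_1,\dots,ke_r)}$ for $k\in K$, absorb the rotated frame into $K_0$ via transitivity on ordered Jordan frames, and conclude using normality of $G_0(\Omega)$ and the factorization $G(\Omega)=K\,T_-$. The paper instead either argues that the $G(\Omega)$-span of $\Pc_{\vect s}$ cannot contain $\Delta^{\vect s'}$ for $\vect s'\neq\vect s$, or exhibits an explicit coset representative of $G(\Omega)/G_0(\Omega)$ fixing every $\Delta^{\vect s}$; your version avoids both the ``arguing as in'' appeal and the case-by-case inspection.

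The last paragraph, however, has a genuine gap. The $\Pc_{\vect s}$ are irreducible and mutually inequivalent as $G_0(\Omega)$-modules --- that is the content of \cite[Theorem~XI.2.4]{FarautKoranyi} --- but they are in general neither irreducible nor disjoint in their isotypic components as $K_0$-modules. Already for $F=\mathrm{Sym}(2,\R)$, with $K_0\cong SO(2)/\{\pm I\}$, the space $\Pc_{(1,0)}$ of all linear polynomials splits under $K_0$ into the trace line plus the traceless part, and in degree $2$ both $\Pc_{(2,0)}$ and $\Pc_{(1,1)}=\C\det$ contain a trivial $K_0$-type, so a line of the form $\C(\det+q)$, with $q$ a $K_0$-fixed vector of $\Pc_{(2,0)}$, is graded and $K_0$-invariant without being a sum of $\Pc_{\vect s}$'s. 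Hence ``complete reducibility for $K_0$ plus mutual inequivalence'' does not show that $V\cap\Pc_d$ is a sum of $\Pc_{\vect s}$'s. The repair is to use the full hypothesis: $V\cap\Pc_d$ is $G_0(\Omega)$-invariant, and since $\Pc_d$ is a finite direct sum of pairwise inequivalent irreducible $G_0(\Omega)$-modules, every $G_0(\Omega)$-submodule is the sum of those it contains (semisimplicity plus the isotypic decomposition); this is exactly what the paper reads off from \cite[Theorem~XI.2.4]{FarautKoranyi}, and with the correct group your degree-grading reduction is not even needed.
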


\begin{proof}
	The facts that $\Pc=\bigoplus_{\vect s\in \N_\Omega} \Pc_{\vect s}$ and that every $G_0(\Omega)$-invariant vector subspace of $\Pc$ is a sum of the $\Pc_{\vect s}$ follow from~\cite[Theorem XI.2.4]{FarautKoranyi}. It only remains to prove that the $\Pc_{\vect s}$ are $G(\Omega)$-invariant. To this aim, observe first that the $G(\Omega)$-invariant space $\Pc'_{\vect s}$ generated by $\Pc_{\vect s}$ must be a sum of $\Pc_{\vect s'}$ by~\cite[Theorem XI.2.4]{FarautKoranyi}. Now, arguing as in the proofs of~\cite[Lemma XI.2.3 and Theorem XI.2.4]{FarautKoranyi}, one sees that $\Pc'_{\vect s}$ cannot contain $\Delta^{\vect s'}$ unless  $\vect s'=\vect s$, so that $\Pc'_{\vect s}=\Pc_{\vect s}$. Alternatively, one may observe that there is $k\in G(\Omega)$ (possibly in $G_0(\Omega)$) such that $G(\Omega)/G_0(\Omega)=\Set{G_0(\Omega), k G_0(\Omega)}$ and such that $\Delta^{\vect s}\circ k=\Delta^{\vect s}$ for every $\vect s\in \C^r$ (cf.~\cite[p.~42]{Satake}).\footnote{With the notation of Examples~\ref{ex:1} and~\ref{ex:2}, the cases in which $G_0(\Omega)\neq G(\Omega)$ are the following ones: a) $r=2$, in which case one may set $k\left(\begin{smallmatrix} a & b\\ b & c\end{smallmatrix} \right)=\left(\begin{smallmatrix} a & E_{m-2}b\\ E_{m-2} b & c\end{smallmatrix} \right)$, where $E_h=\left(\begin{smallmatrix} -1 &0 \\ 0 & I_{h-1} \end{smallmatrix}\right)$; b) $r\Meg 4$ is even and $\Omega$ is the cone of non-degenerate  positive symmetric real  matrices, in which case one may set $k x=E_{r} x E_r$; c) $r\Meg 3$ and $\Omega$ is the cone of  non-degenerate positive hermitian complex  matrices, in which case one may set $k x = \overline x$.}
\end{proof}

\begin{deff}
Denote by $\widetilde\Dc$ the set of distributions on $F$ supported in $\Set{0}$, and  by $\widetilde\Dc_{\vect s}$ the $G_0(\Omega)$-invariant subspace of $\widetilde\Dc$ generated by $I^{-\vect s}$ for every $\vect s\in\N_{\Omega}^*$. 
\end{deff}

By Proposition~\ref{prop:13} (applied to the $\Delta^{\vect s}_*$ by means of the Laplace transform) we infer that the $\widetilde \Dc_{\vect s}$ are also $G(\Omega)$-invariant, and that $\widetilde \Dc=\bigoplus_{\vect s\in \N_{\Omega}^*} \widetilde\Dc_{\vect s}$.

\begin{prop}\label{prop:9}
	For every $\vect s\in \N_{\Omega}$ and for every $\vect s'\in \N_{\Omega}^*$,
	\[
	\Pc_{\vect s}^\circ=\bigoplus_{\vect s''\neq \sigma(\vect s)}\widetilde\Dc_{\vect s''} \qquad \text{and}\qquad 
	\widetilde\Dc_{\vect s'}=\bigoplus_{\vect s''\neq \sigma(\vect s')} \Pc_{\vect s''}^\circ,
	\]
	where the polars refer to the natural duality between $\Pc$ and $\widetilde\Dc$.
\end{prop}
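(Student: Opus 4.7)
The strategy is to analyze the natural bilinear pairing
\[
\Pc\times\widetilde\Dc\ni (p,T)\longmapsto T(p)\in\C,
\]
which is well defined since each $T\in\widetilde\Dc$ is compactly supported and every polynomial is smooth. I would first observe two basic properties: non-degeneracy (a Taylor expansion at $0$ shows that $T(p)=0$ for every polynomial $p$ forces $T=0$, and analogously in $p$), and $G_0(\Omega)$-invariance (using $(g\cdot p)(x)=p(g^{-1}x)$ and $(g\cdot T)(\phi)=T(\phi\circ g)$, one computes $(g\cdot T)(g\cdot p)=T(p)$).

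Combined with the $G_0(\Omega)$-invariant decompositions $\Pc=\bigoplus_{\vect s\in\N_\Omega}\Pc_{\vect s}$ of Proposition~\ref{prop:13} and $\widetilde\Dc=\bigoplus_{\vect s'\in\N^{*}_\Omega}\widetilde\Dc_{\vect s'}$ obtained in the remark following it---each summand $G_0(\Omega)$-irreducible---Schur's lemma forces the restriction of the pairing to $\Pc_{\vect s}\times\widetilde\Dc_{\vect s'}$ to be either zero or a perfect pairing identifying $\widetilde\Dc_{\vect s'}$ with the contragredient $\Pc_{\vect s}^{*}$ as a $G_0(\Omega)$-module. Global non-degeneracy then guarantees that for each $\vect s\in\N_\Omega$ exactly one $\vect s'\in\N^{*}_\Omega$ yields a non-trivial pairing; the content of the proposition is to show that this $\vect s'$ is $\sigma(\vect s)$.

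I would identify $\vect s'$ via $T_{-}$-weight data. The polynomial $\Delta^{\vect s}$ is a joint $T_{-}$-eigenvector with character $\Delta^{-\vect s}$, while a short Laplace-transform computation using the multiplicative identity $\Delta^{\vect s'}_{*}(y\cdot t)=\Delta^{\vect s'}_{*}(y)\Delta^{\vect s'}(t)$ recalled in Section~\ref{sec:2:7} shows that $I^{-\vect s'}$ is a joint eigenvector with character $\Delta^{\vect s'}$. The element $k\in K_0\subseteq G_0(\Omega)$ swapping the frames $(e_1,\dots,e_r)$ and $(e_r,\dots,e_1)$ supplied by Proposition~\ref{prop:3b} realizes the longest Weyl element of $G_0(\Omega)$ relative to the split torus $A\subseteq T_{-}$, and its action on characters is $\sigma$. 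Standard highest-weight theory then yields that $\Pc_{\vect s}$, with highest $T_{-}$-weight $\Delta^{-\vect s}$, has lowest weight $\Delta^{-\sigma(\vect s)}$, so that $\Pc_{\vect s}^{*}$ has highest weight $\Delta^{\sigma(\vect s)}$---matching the highest weight of $\widetilde\Dc_{\sigma(\vect s)}$ carried by $I^{-\sigma(\vect s)}$. Hence $\vect s'=\sigma(\vect s)$, from which the first equality of the proposition follows at once. The second equality is the symmetric statement (equivalently $\widetilde\Dc_{\vect s'}=\bigcap_{\vect s''\neq\sigma(\vect s')}\Pc_{\vect s''}^{\circ}$), obtained by exchanging the roles of $\Pc$ and $\widetilde\Dc$ in the same Schur argument.

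The step I expect to be most delicate is the verification that the longest Weyl element of $G_0(\Omega)$ acts on $A$-characters by $\sigma$: this is classical in the symmetric-cone setting but not made explicit in the text. A concrete alternative that avoids explicit Weyl theory is to transport the pairing through the Laplace transform $\Lc$ to the symmetric bilinear form $B(p,q)=q(-\partial)p|_{0}$ on $\Pc$; by Proposition~\ref{prop:3b} one obtains $\Lc(\widetilde\Dc_{\vect s'})=\Pc_{\sigma(\vect s')}$, and $B$ is $K_0$-invariant because $K_0\subseteq O(F)$. The non-vanishing $B(\Delta^{\vect s},\Delta^{\vect s})=(-1)^{|\vect s|}\Delta^{\vect s}(\partial)\Delta^{\vect s}|_{0}\neq 0$, which may be read off from Lemma~\ref{lem:8} by iteration, combined with the $K_0$-irreducibility and pairwise non-isomorphism of the $\Pc_{\vect t}$ (Hua--Kostant--Schmid), forces $B(\Pc_{\vect s},\Pc_{\vect s''})=0$ for $\vect s\neq\vect s''$ via Schur, and yields the same conclusion.
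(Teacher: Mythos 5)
Your ``concrete alternative'' in the last paragraph is, in substance, the paper's own proof: the paper also transports the duality to the Fischer form on $\Pc$ (via the map $\Ic\colon q\mapsto \Fc^{-1}(q(-i\,\cdot\,))$, whose Laplace transform is $q(-\,\cdot\,)$), identifies $\Ic(\Pc_{\vect s})=\widetilde\Dc_{\sigma(\vect s)}$ by computing $\Lc(\Ic(\Delta^{\vect s}))=\Delta^{\vect s}=\Delta_*^{\sigma(\vect s)}\circ k=\Lc(k_*I^{-\sigma(\vect s)})$ with the $k$ of Proposition~\ref{prop:3b}, and then invokes the orthogonality of the $\Pc_{\vect s}$ for the Fischer product. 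The only real difference is that the paper cites \cite[Theorem XI.2.4]{FarautKoranyi} for that orthogonality, whereas you rederive it from $K_0$-irreducibility, pairwise inequivalence, and Schur's lemma; both are fine. Your primary route (Schur plus identification of the matching parameter by highest/lowest $T_-$-weights and the longest Weyl element) is also workable, and the step you flag as delicate is precisely what Proposition~\ref{prop:3b} makes concrete, so you were right to route around it.

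One genuine flaw to repair: the claim that $B(\Delta^{\vect s},\Delta^{\vect s})\neq 0$ ``may be read off from Lemma~\ref{lem:8} by iteration'' does not work. Lemma~\ref{lem:8} governs convolution with $I^{-\vect s'}$ for $\vect s'\in\N_\Omega^*$ (increasing exponents), i.e.\ operators of symbol $\Delta_*^{\vect s'}$, not $\Delta^{\vect s}(\partial)$; and the natural candidate pairing it computes, namely $\big(\vect s+\tfrac12\vect m'\big)_{\sigma(\vect s)}$, genuinely vanishes in examples (e.g.\ $r=2$, $a=1$, $\vect s=(1,0)$ gives the factor $s_2+\tfrac12 m'_2=0$), reflecting the fact that $\Delta^{\vect s}*I^{-\sigma(\vect s)}=0$ even though $\Pc_{\vect s}$ and $\widetilde\Dc_{\sigma(\vect s)}$ pair non-degenerately. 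The correct and immediate justification is the positive-definiteness of the Fischer inner product (again \cite[Theorem XI.2.4]{FarautKoranyi}, or the elementary identity $p(-\partial)p|_0=(-1)^{\deg p}\norm{p}_\Fc^2$ for real homogeneous $p$), which gives $B(\Delta^{\vect s},\Delta^{\vect s})=\pm\norm{\Delta^{\vect s}}_\Fc^2\neq 0$. With that substitution your argument is complete and matches the paper's.
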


Recall that  $\sigma(s_1,\dots,s_r)=(s_r,\dots,s_1)$ for every $(s_1,\dots, s_r)\in \C^r$.

\begin{proof}
	Observe that the mapping $\Ic\colon p\mapsto \Fc^{-1}(q(-i\,\cdot\,)) $, where $\Fc^{-1}$ denotes the inverse Fourier transform, induces an isomorphism of   $\Pc$ onto $\widetilde\Dc$, and that for every $q\in \Pc$ and for every $z\in F_\C$
	\[
	\langle \Ic(q),\ee^{\langle \,\cdot\,,z\rangle }\rangle=q( z), \qquad\text{that is,} \qquad \Lc \Ic(q)=q(-\,\cdot\,).
	\]
	Consider the sesquilinear mapping (`Fischer inner product')
	\[
	\langle \,\cdot\,\vert \,\cdot\,\rangle\colon\Pc\times \Pc \ni (q_1,q_2)\mapsto \langle\Ic(q_1),  q_2^*\rangle=\overline{\langle \overline{\Ic(q_1)},q_2\rangle}\in \C
	\]
	where $q_2^*$ is the element of $\Pc$ defined by $q_2^*(z)\coloneqq \overline{q_2(\overline z)}$ for every $z\in F_\C$. Then, $\langle \,\cdot\,\vert \,\cdot\,\rangle$ is a scalar product on $\Pc$ with respect to which the $\Pc_{\vect s}$ are orthogonal to one another (cf.~\cite[Theorem XI.2.4]{FarautKoranyi}). Now, observe that the generators $\Delta^{\vect s}\circ g$, $g\in G_0(\Omega)$, of $\Pc_{\vect s}$ are real on $F$, hence ${}^*$-invariant. Then, $\Pc_{\vect s}$ is ${}^*$-invariant. 
	It will therefore suffice to show that $\Ic(\Pc_{\vect s})=\widetilde\Dc_{\sigma(\vect s)}$ for every $\vect s\in \N_\Omega$. Observe first that, if $q\in \Pc$ and $g\in G_0(\Omega)$, then $\Ic(q\circ g)=(g^*)^*\Ic(q)$, where $(g^*)^*$ denotes the pull-back under the adjoint $g^*$ of $g$ (which still belongs to $G_0(\Omega)$ since $\Omega$ is symmetric).
	Thus, $\Ic(\Pc_{\vect s})$ is the $G_0(\Omega)$-invariant subspace of $\widetilde\Dc$ generated by $\Ic( \Delta^{\vect s})$. Now, by Proposition~\ref{prop:3b}, there is $k\in G_0(\Omega)$ such that
	\[
	(-1)^{s_1+\cdots+s_r}\Lc(\Ic(\Delta^{\vect s}))=\Lc(\Ic(\Delta^{\vect s})(-\,\cdot\,))=\Delta^{\vect s}=\Delta_{*}^{\sigma(\vect s)}\circ k=\Lc(k_* I^{-\sigma(\vect s)})
	\]
	on $\Omega$,	so that  $\Ic(\Delta^{\vect s})=(-1)^{s_1+\cdots+s_r} k_* I^{-\sigma(\vect s)}$. The assertion follows. 
\end{proof}

\begin{deff}
We denote by $\Dc_{\vect s}$, for every $\vect s\in \N_{\Omega}^*$, the space of the continuous linear mappings of the form 
\[
\Hol(D)\ni f \mapsto f * I\in \Hol(D)
\]
as $I$ runs through $\widetilde \Dc_{\vect s}$. We then define $\ker \Dc_{\vect s}$ as $\bigcap_{X\in \Dc_{\vect s}}\ker X$.\footnote{Notice that $\ker \Dc_{\vect s}=\Dc_{\vect s}^\circ$ for the canonical duality between $\Hol(D)$ and the space of differential operators with constant coefficients on $\Hol(D)$.  }
\end{deff}

Notice that a vector subspace of $\Hol(D)$ is $\Aff_0$-$\Uc_{\lambda \vect 1_r}$-invariant if and only if it is $\Aff_0$-$\Uc_{\vect 0}$-invariant, so that we simply say that it is $\Aff_0$-invariant in this case. Similar remarks apply to $\Aff$-invariance.

\begin{cor}\label{cor:6}
	Let $V$ be an $\Aff_0$-invariant closed subspace of $\Hol(D)$. Then, $V$ is $\Aff$-invariant, $V\cap \Pc$ is dense in $V$ and there is $N\subseteq \N_{\Omega}$ such that  $V\cap \Pc=\bigoplus_{\vect s\in N} \Pc_{\vect s}$. In addition, $N'\coloneqq \N_{\Omega}^*\setminus \sigma(N)$ is the set of $\vect s\in \N_{\Omega}^*$ such that $V\subseteq \ker \Dc_{\vect s}$. Finally, $V=\bigcap_{\vect s\in N'} \ker \Dc_{\vect s}$.
\end{cor}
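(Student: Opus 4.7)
Define $N\coloneqq \{\vect s\in \N_\Omega\colon \Pc_{\vect s}\subseteq V\}$. Since $V$ is $\Aff_0$-invariant, $V\cap \Pc$ is a $G_0(\Omega)$-invariant subspace of $\Pc$, and Proposition~\ref{prop:13} at once yields $V\cap \Pc=\bigoplus_{\vect s\in N}\Pc_{\vect s}$ together with the $G(\Omega)$-invariance of every summand; this disposes of one of the stated claims.

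The inclusion $N'\subseteq \N_\Omega^*\setminus \sigma(N)$ is a Fischer-duality argument. Suppose $\sigma(\vect s')\in N$, so $\Pc_{\sigma(\vect s')}\subseteq V$. The non-degenerate pairing of $\widetilde\Dc_{\vect s'}$ with $\Pc_{\sigma(\vect s')}$ provided by Proposition~\ref{prop:9} furnishes $p\in \Pc_{\sigma(\vect s')}$ and $I\in \widetilde\Dc_{\vect s'}$ with $(Ip)(0)\neq 0$. Since $p$ has degree $|\sigma(\vect s')|=|\vect s'|$ and $I$ is an order-$|\vect s'|$ constant-coefficient differential operator, the polynomial $p*I=Ip$ is a non-zero constant, hence non-vanishing on $D$; thus $V\not\subseteq\ker \Dc_{\vect s'}$, i.e.\ $\vect s'\notin N'$.

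The reverse inclusion, together with the density of $V\cap \Pc$ in $V$ and the identity $V=\bigcap_{\vect s'\in N'}\ker \Dc_{\vect s'}$, I plan to derive in one stroke from the spectral-synthesis / mean-periodic-function description given by Proposition~7.1 of~\cite{Rango1}. Applied to the closed, translation-invariant, $G_0(\Omega)$-invariant subspace $V$, that proposition represents $V$ as the joint kernel of convolution against the annihilator $\mathrm{Ann}(V)\coloneqq\{I\in \widetilde\Dc\colon V*I=0\}$, and simultaneously yields density of $V\cap \Pc$ in $V$. The $G_0(\Omega)$-invariance of $\mathrm{Ann}(V)$, together with the Laplace-transform analogue of Proposition~\ref{prop:13} (which transfers the isotypic decomposition from $\Pc$ to $\widetilde\Dc$), decomposes $\mathrm{Ann}(V)=\bigoplus_{\vect s'\in N'}\widetilde\Dc_{\vect s'}$, whence $V=\bigcap_{\vect s'\in N'}\ker \Dc_{\vect s'}$; combined with the Fischer inclusion above, this forces $N'=\N_\Omega^*\setminus \sigma(N)$.

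Finally, the $\Aff$-invariance of $V$ follows from this identity: each $\widetilde\Dc_{\vect s'}$ is $G(\Omega)$-invariant by Proposition~\ref{prop:13} applied to $\widetilde\Dc$ via the Laplace transform, so the corresponding space $\Dc_{\vect s'}$ of constant-coefficient differential operators is stable under conjugation by the $G(\Omega)$-action on $\Hol(D)$; hence each $\ker \Dc_{\vect s'}$, and therefore $V$, is $G(\Omega)$-invariant, which combined with translation-invariance yields $\Aff$-invariance (since $\Aff=F\rtimes G(\Omega)$ in the tube case). \emph{The main obstacle} in this plan is verifying the hypotheses and conclusion of Proposition~7.1 of~\cite{Rango1} in the present generality; once that spectral-synthesis statement is in hand, everything else reduces to formal manipulations with Fischer duality and the isotypic decomposition of polynomials.
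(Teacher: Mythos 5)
Your overall strategy is the same as the paper's: both arguments rest on the spectral--synthesis results of~\cite{Rango1} (the paper uses Proposition~7.1 there for the density of $V\cap\Pc$ and the $\Aff$-invariance, and Corollary~7.3 for the identity $V=\bigcap_{\vect s\in N'}\ker\Dc_{\vect s}$), on Proposition~\ref{prop:13} for the isotypic decomposition of $V\cap\Pc$, and on the Fischer duality of Proposition~\ref{prop:9} to relate $N$ and $N'$. Your one-sided Fischer argument (if $\sigma(\vect s')\in N$ then $V\not\subseteq\ker\Dc_{\vect s'}$, via a pair $p,I$ with $\langle I,p\rangle\neq 0$ and hence $(p*I)(0)\neq 0$) is correct and is exactly the paper's ``forward'' direction.

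The gap is in the step where you claim that the decomposition $\mathrm{Ann}(V)=\bigoplus_{\vect s'\in N'}\widetilde\Dc_{\vect s'}$, combined with the Fischer inclusion, \emph{forces} $N'=\N_\Omega^*\setminus\sigma(N)$. What you actually have at that point is $N'\subseteq \N_\Omega^*\setminus\sigma(N)$ and $V=\bigcap_{\vect s'\in N'}\ker\Dc_{\vect s'}$, which only yields $V\supseteq\bigcap_{\vect s'\in \N_\Omega^*\setminus\sigma(N)}\ker\Dc_{\vect s'}$ --- the wrong inclusion. The missing implication is the converse: $\sigma(\vect s')\notin N\Rightarrow V\subseteq\ker\Dc_{\vect s'}$, and this does \emph{not} follow formally from Fischer duality, because $\Pc_{\vect s}\subseteq \Pc_{\sigma(\vect s')}^{\,\perp}$ (orthogonality for the Fischer pairing, i.e.\ vanishing of $(q*I)(0)$) is much weaker than $\Pc_{\vect s}\subseteq\ker\Dc_{\vect s'}$ (vanishing of $q*I$ everywhere): already for $r=1$ one has $x^s*\delta_0^{(s')}\neq 0$ whenever $s>s'$, even though the pairing of $x^s$ with $\delta_0^{(s')}$ vanishes. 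The paper closes this by using the translation-invariance of $V$ together with the density of $V\cap\Pc$: for $q\in V\cap\Pc$ and $I\in\widetilde\Dc_{\vect s'}$ one writes $(q*I)(x)=\pm\langle I,q(x+\,\cdot\,)\rangle$, observes that $q(x+\,\cdot\,)\in V\cap\Pc=\bigoplus_{\vect s\in N}\Pc_{\vect s}$, which lies in the polar of $\widetilde\Dc_{\vect s'}$ by Proposition~\ref{prop:9} when $\sigma(\vect s')\notin N$, and then passes from $V\cap\Pc$ to $V$ by density and continuity of $f\mapsto f*I$. You should insert this argument; note that it is precisely here (and not merely in the hypothesis of the synthesis theorem) that the translation-invariance of $V$ is used in an essential way. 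A secondary caveat: you ask Proposition~7.1 of~\cite{Rango1} to deliver the full joint-kernel representation of $V$; the paper instead obtains that from Corollary~7.3 of~\cite{Rango1}, so you should check which statement actually carries that content before declaring the reduction complete.
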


\begin{proof}
	The first assertion follows from~\cite[Proposition 7.1]{Rango1} and Proposition~\ref{prop:13}. Then, take $\vect s\in \N_{\Omega}^*$, and let us prove that $V\subseteq \ker \Dc_{\vect s}$ if and only if $V\cap \Pc\subseteq \ker \widetilde \Dc_{\vect s}$, that is, if and only if $\vect s\in N'$, thanks to Proposition~\ref{prop:9}. Observe first that, if $V\subseteq \ker \Dc_{\vect s}$, then, denoting by $\check I$ the reflection of $I$ (i.e., $(-\,\cdot\,)_*I$),
	\[
	\langle I,q\rangle=(-1)^{s_1+\cdots+s_r}\langle \check I, q\rangle=(-1)^{s_1+\cdots+s_r}(q*I)(0)=0
	\]
	for every $I\in \widetilde \Dc_{\vect s}$ and for every $q\in V\cap \Pc$, thanks to the homogeneity of $I$. Then, $V\cap \Pc \subseteq \ker \widetilde \Dc_{\vect s}$. Conversely, if $V\cap \Pc \subseteq \ker \widetilde \Dc_{\vect s}$, then for every $q\in V\cap \Pc$ and for every $I\in \widetilde \Dc_{\vect s}$, using the translation-invariance of $V$ we see that
	\[
	(q*I)(x)=\langle \check I, q(x+\,\cdot\,)\rangle=(-1)^{s_1+\cdots+s_r} \langle I,q(x+\,\cdot\,)\rangle=0
	\]
	for every $x\in F$, so that $q*I=0$ by holomorphy. By continuity and the arbitrariness of $I$ and $q$, we then infer that $V\subseteq \ker \Dc_{\vect s}$. The last assertion then follows by means of~\cite[Corollary 7.3]{Rango1}.
\end{proof}

\begin{prop}\label{prop:10}
	Take $\vect s,\vect s'\in \N_{\Omega}^*$. Then, $\ker \Dc_{\vect s}\subseteq \ker \Dc_{\vect s+\vect s'}$.
\end{prop}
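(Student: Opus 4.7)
The plan is to establish the stronger statement that $\widetilde\Dc_{\vect s + \vect s'} \subseteq \widetilde\Dc_{\vect s} * \widetilde\Dc_{\vect s'}$ (convolution being well defined and associative since the distributions in question all have support at $0$). The conclusion is then immediate: for any $f \in \ker \Dc_{\vect s}$ and any $J = \sum_k I_k * I'_k \in \widetilde\Dc_{\vect s + \vect s'}$ with $I_k \in \widetilde\Dc_{\vect s}$ and $I'_k \in \widetilde\Dc_{\vect s'}$,
\[
f * J = \sum_k (f * I_k) * I'_k = 0,
\]
so $f \in \ker \Dc_{\vect s + \vect s'}$.

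The first step is the identity $I^{-\vect s} * I^{-\vect s'} = I^{-(\vect s + \vect s')}$. Since $\vect s, \vect s' \in \N^*_\Omega$, the three distributions are supported in $\{0\}$ and their Laplace transforms are polynomials, so by the characterization in Definition~\ref{def:1} it suffices to verify the multiplicativity $\Delta_*^{\vect s} \cdot \Delta_*^{\vect s'} = \Delta_*^{\vect s + \vect s'}$ on $F_\C$. This reduces, by analytic continuation, to the same identity on $\Omega$, which follows from the character relation $\Delta_*^{\vect s}(e_\Omega \cdot t) = \prod_j t_{j,j}^{2 s_j}$ recorded in Subsection~\ref{sec:2:7} together with the simple transitivity of the action of $T_-$ on $\Omega$.

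The second step is a $G_0(\Omega)$-orbit argument. Since every $g \in G_0(\Omega) \subseteq GL(F)$ is, in particular, an automorphism of the additive group $F$, push-forward by $g$ commutes with convolution, so
\[
g_* I^{-(\vect s + \vect s')} = g_*(I^{-\vect s} * I^{-\vect s'}) = (g_* I^{-\vect s}) * (g_* I^{-\vect s'}) \in \widetilde\Dc_{\vect s} * \widetilde\Dc_{\vect s'}.
\]
By definition $\widetilde\Dc_{\vect s + \vect s'}$ is the linear span of the $G_0(\Omega)$-orbit of $I^{-(\vect s + \vect s')}$, so the desired inclusion follows. No individual step is a serious obstacle; the only point requiring some care is the multiplicativity $\Delta_*^{\vect s} \Delta_*^{\vect s'} = \Delta_*^{\vect s + \vect s'}$, which ultimately rests on the fact that the generalized power functions are characters of $T_-$.
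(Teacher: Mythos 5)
Your proof is correct and rests on the same mechanism as the paper's: the convolution identity $I^{-\vect s}*I^{-\vect s'}=I^{-(\vect s+\vect s')}$ (read off the Laplace transforms and the multiplicativity of the generalized power functions), transported along a group orbit to cover all of $\widetilde\Dc_{\vect s+\vect s'}$. The paper phrases this by convolving $f$ with $k_*I^{-\vect s}$ and $k_*I^{-\vect s'}$ for $k$ in the stabilizer $K_0$ of $e_\Omega$; your version, which pushes forward by arbitrary $g\in G_0(\Omega)$ and so matches the definition of $\widetilde\Dc_{\vect s+\vect s'}$ as the span of the full $G_0(\Omega)$-orbit directly, is if anything marginally more self-contained.
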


\begin{proof}
	Take $k\in K_0$ and $f\in \ker \Dc_{\vect s}$. Then, $f*k_* I^{-\vect s}_\Omega=0$, so that $0=f*k_* I^{-\vect s}_\Omega*k_* I^{-\vect s'}_\Omega=f*k_* I^{-\vect s-\vect s'}_\Omega$. By the arbitrariness of $k\in K_0$, this implies that $f\in \ker \Dc_{\vect s+\vect s'}$, whence the result.
\end{proof}

\begin{prop}\label{prop:7}
	Take $\vect s\in \N_\Omega^*$ and $h\in \N$. If  $\vect s \meg h\vect 1_r$, then $\ker \Dc_{\vect s}\subseteq \ker \Dc_{h\vect 1_r}=\ker \square^h$.  
\end{prop}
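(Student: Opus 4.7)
The plan is to reduce the inclusion $\ker\Dc_{\vect s}\subseteq\ker\Dc_{h\vect 1_r}$ to the single non-vanishing statement $\Delta^{h\vect 1_r}*I^{-\vect s}\neq 0$ by applying Corollary~\ref{cor:6}, and then to check that non-vanishing by a direct computation via Lemma~\ref{lem:8}.

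First I would dispose of the equality $\ker\Dc_{h\vect 1_r}=\ker\square^h$. Because $\Delta^{h\vect 1_r}$ equals the $h$-th power of the Jordan determinant, it is $G_0(\Omega)$-semi-invariant, so $\Pc_{h\vect 1_r}=\C\,\Delta^{h\vect 1_r}$ is one-dimensional; dually, via Proposition~\ref{prop:9} together with $\sigma(h\vect 1_r)=h\vect 1_r$, also $\widetilde\Dc_{h\vect 1_r}=\C\,I^{-h\vect 1_r}$. Since $\square^h$ is by definition convolution with $I^{-h\vect 1_r}$, the space $\Dc_{h\vect 1_r}$ is spanned by $\square^h$, whence the equality.

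For the inclusion, I would apply Corollary~\ref{cor:6} to the $\Aff_0$-invariant closed subspace $V:=\ker\Dc_{\vect s}$. Writing $V\cap\Pc=\bigoplus_{\vect b\in N}\Pc_{\vect b}$, the Corollary gives $V=\bigcap_{\vect s''\in\N^*_\Omega\setminus\sigma(N)}\ker\Dc_{\vect s''}$. Hence $V\subseteq\ker\Dc_{h\vect 1_r}$ is equivalent to $h\vect 1_r\in\N^*_\Omega\setminus\sigma(N)$, i.e.\ (since $\sigma(h\vect 1_r)=h\vect 1_r$) to $h\vect 1_r\notin N$, i.e.\ to $\Pc_{h\vect 1_r}=\C\,\Delta^{h\vect 1_r}\not\subseteq V$. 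The task therefore reduces to exhibiting just one operator in $\Dc_{\vect s}$ that does not annihilate $\Delta^{h\vect 1_r}$.

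The natural choice is convolution with $I^{-\vect s}$ itself, for which Lemma~\ref{lem:8} gives
\[
\Delta^{h\vect 1_r}*I^{-\vect s}=\Big(h\vect 1_r+\tfrac{1}{2}\vect m'\Big)_{\vect s}\,\Delta^{h\vect 1_r-\vect s}.
\]
Both hypotheses $\vect s\in\N^*_\Omega$ and $\vect s\meg h\vect 1_r$ enter precisely here, and the only genuinely computational part of the proof is the verification that neither factor on the right-hand side vanishes: the Pochhammer coefficient is a product of terms each bounded below by $h-s_j+1+m'_j/2\geq 1$ (using $s_j\meg h$ and $m'_j\geq 0$); and the exponents $s_{j+1}-s_j$ (for $j<r$) and $h-s_r$ of the generalized power function $\Delta^{h\vect 1_r-\vect s}$ are all non-negative (by $\vect s\in\N^*_\Omega$ and $s_r\meg h$ respectively), so $\Delta^{h\vect 1_r-\vect s}\in\Pc_{h\vect 1_r-\vect s}$ is a non-zero polynomial, taking the value $1$ at $e_\Omega$. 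This yields $\Delta^{h\vect 1_r}*I^{-\vect s}\neq 0$ and completes the argument.
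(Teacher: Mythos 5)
Your proof is correct, and although it rests on the same two ingredients as the paper's (Corollary~\ref{cor:6} and Lemma~\ref{lem:8}), it runs the argument in the dual direction. The paper also writes $\ker \Dc_{\vect s}\cap \Pc=\bigoplus_{\vect s'\in N}\Pc_{\vect s'}$, but then works component by component on $N$: for each $\vect s'\in N$ the vanishing of $\Delta^{\vect s'}*I^{-\vect s}$ forces, via Lemma~\ref{lem:8} and the monotonicity of $\vect s'$ (decreasing), $\vect s$ (increasing) and $\vect m'$ together with $m'_r=0$, the inequality $s'_r<s_r\meg h$, which in turn makes the Pochhammer symbol $\big(\vect s'+\frac 1 2 \vect m'\big)_{h\vect 1_r}$ vanish, so that $\square^h$ annihilates every $\Pc_{\vect s'}$ with $\vect s'\in N$; density of $\ker\Dc_{\vect s}\cap\Pc$ in $\ker\Dc_{\vect s}$ concludes. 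You instead use the complementary description $\ker\Dc_{\vect s}=\bigcap_{\vect s''\in \N_\Omega^*\setminus \sigma(N)}\ker \Dc_{\vect s''}$ from the same corollary to reduce the whole inclusion to the single assertion $\Delta^{h\vect 1_r}\notin \ker\Dc_{\vect s}$, which you settle by one positivity check of $\big(h\vect 1_r+\frac 1 2\vect m'\big)_{\vect s}$. So the paper's computation is a two-step ``one vanishing Pochhammer symbol forces another to vanish'' analysis, while yours is a single sufficiency check; yours is shorter and avoids the monotonicity bookkeeping. A further merit of your write-up is that it justifies the equality $\ker\Dc_{h\vect 1_r}=\ker\square^h$ (via the one-dimensionality of $\Pc_{h\vect 1_r}$, Proposition~\ref{prop:9}, and the identity $I^{-\vect 1_r}*\cdots*I^{-\vect 1_r}=I^{-h\vect 1_r}$), a point the paper's proof leaves implicit. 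All the individual steps check out, including the applicability of Corollary~\ref{cor:6} to the closed $\Aff_0$-invariant subspace $\ker\Dc_{\vect s}$ and the non-vanishing of both factors produced by Lemma~\ref{lem:8}.
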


\begin{proof} 
	By Corollary~\ref{cor:6}, there is $N\subseteq \N_{\Omega}$ such that $\ker \Dc_{\vect s}\cap \Pc=\bigoplus_{\vect s'\in N} \Pc_{\vect s'}$. It  will then suffice to prove that $\square^h \Pc_{\vect s'}=\Set{0}$ for every $\vect s'\in N$. Observe that, since $\square^h$ is $K$-invariant, $\square^h\Pc_{\vect s'}=\Set{0}$  if and only if $\square^h\Delta^{\vect s'}=0$. By Lemma~\ref{lem:8}, this is the case if and only if
	\[
	0=\Big(\vect s'+\frac 1 2 \vect m'\Big)_{h\vect 1_r}=\prod_{j=1}^r \Big(s'_j+\frac 1 2 m'_j\Big)\cdots \Big(s'_j-h+\frac 1 2 m'_j+1\Big)
	\] 
	that is, if and only if there is $j$ such that $s'_j+\frac 1 2 m'_j$ is an integer $<h$. Since $\vect s'$ is decreasing and $m'_r=0$, this is equivalent to saying that $s'_r<h$. Now, if $\vect s'\in N$, then, in particular, $\Delta^{\vect s'} *I^{-\vect s} =0$, so that 
	\[
	0=\Big(\vect s'+\frac 1 2 \vect m'\Big)_{\vect s}=\prod_{j=1}^r \Big(s'_j+\frac 1 2 m'_j\Big)\cdots \Big(s'_j-s_j+\frac 1 2 m'_j+1\Big)
	\] 
	by Lemma~\ref{lem:8} again. Arguing as before, and taking into account the fact that $\vect s$ is increasing, we then see that $s'_r<s_r\meg h$, so that $\square^h \Delta^{\vect s'}=0$. Thus, $\ker \Dc_{\vect s}\subseteq \ker \square^h$. 
\end{proof}

\begin{teo}\label{prop:11}
	Take $\lambda\in \R$.  Let $H$ be a strongly decent non-trivial semi-Hilbert space of holomorphic functions on $D$ such that $\Uc_{\lambda \vect 1_r}$ induces a bounded (resp.\ isometric) representation of $\Aff_0$ in $H$. Then, there are $\ell\in \Set{0,\dots, r}$ and $\vect s\in\N_{\Omega}^*$ such that the following hold:
	\begin{itemize}
		\item   $\lambda \vect 1_r+2\vect s \succ_{\epsb} \frac 1 2 \vect m'^{(\epsb)}$, where $\eps_k=0$ for $k=1,\dots,r-\ell$ and $\eps_k=1$ for $k=r-\ell+1,\dots, r$;
		
		\item $H$ is a dense subspace (resp.\ with a proportional seminorm) of $\Ac_{\lambda ,s_1}+\ker \Dc_{\vect s}$, endowed with the unique seminorm which induces on $\Ac_{\lambda ,s_1}$ its seminorm, and the zero seminorm on $\ker \Dc_{\vect s}$.
		\end{itemize}
\end{teo}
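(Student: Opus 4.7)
The plan is to combine Proposition~\ref{cor:1} (which classifies $G_T$-invariant semi-Hilbert structures) with the structure theorem for $\Aff_0$-invariant closed subspaces of $\Hol(D)$ (Corollary~\ref{cor:6}), the operator calculus of Proposition~\ref{prop:10}, and the isomorphisms of Corollary~\ref{cor:8}. First I would invoke strong decency via Proposition~\ref{prop:6} with $\Gc=\Aff_0$ to produce a closed $\Aff_0$-invariant subspace $V\subseteq\Hol(D)$ with $H\cap V$ the closure of $\{0\}$ in $H$ and $H\to\Hol(D)/V$ continuous; Corollary~\ref{cor:6} upgrades $V$ automatically to $\Aff$-invariance and decomposes it as $V=\bigcap_{\vect t\in N'}\ker\Dc_{\vect t}$ for a nonempty $N'\subseteq\N_\Omega^*$. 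Using the $K_0$-equivariance of $H$ under $\Uc_{\lambda\vect 1_r}$ together with the transformation rule $(f\circ k)*\mu=(f*B_*\mu)\circ k$ for the linear action of $k=A\times B_\C\in K_0$, I would then pass from the existence of some $\vect t\in N'$ with $H\not\subseteq\ker\Dc_{\vect t}$ (forced by non-triviality $H\not\subseteq V$) to the existence of $\vect s\in N'$ with $H*I^{-\vect s}\neq 0$.

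Second, applying Proposition~\ref{cor:1} with this $\vect s$ gives $\lambda\vect 1_r+2\vect s\in\Gc_*(\Omega)$, a continuous inclusion $H\subseteq\Ac_{\lambda\vect 1_r,\vect s}$, and a $G_T$-equivariant Hilbert isomorphism (resp.\ proportional isometry) $H/(H\cap\ker(\cdot*I^{-\vect s}))\cong\widehat\Ac_{\lambda\vect 1_r,\vect s}$. Since $\vect s\in\N_\Omega^*$ is non-decreasing, a short pigeonhole argument---if $\eps_{j_1}=0$ and $\eps_{j_2}=1$ with $j_1>j_2$, then $\lambda+2s_{j_1}=\tfrac{a}{2}\sum_{k>j_1}\eps_k\leq\tfrac{a}{2}\sum_{k>j_2}\eps_k<\lambda+2s_{j_2}$, contradicting monotonicity---forces the unique $\epsb\in\{0,1\}^r$ with $\lambda\vect 1_r+2\vect s\succ_\epsb\tfrac{1}{2}\vect m'^{(\epsb)}$ (available from the partition in Proposition~\ref{prop:17}) to have the form $(0,\ldots,0,1,\ldots,1)$ with some $\ell$ ones, which gives the first claim. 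Setting $\vect s':=\vect s-s_1\vect 1_r\in\N_\Omega^*$ (well defined because $s_1=\cdots=s_{r-\ell}$ is forced by the $\epsb$-condition when $\ell<r$), one has $\vect s'=\epsb\vect s'$, so Corollary~\ref{cor:8} yields an isomorphism $\Ac_{(\lambda+2s_1)\vect 1_r}\to\Ac_{\lambda\vect 1_r+2\vect s}$ via $g\mapsto g*I^{-\vect s'}$; combined with the surjectivity of $\square^{s_1}$ on $\Hol(D)$ this gives $\Ac_{\lambda\vect 1_r,\vect s}=\Ac_{\lambda,s_1}+\ker(\cdot*I^{-\vect s})$, while Proposition~\ref{prop:10} together with the same isomorphism yields $\Ac_{\lambda,s_1}\cap\ker\Dc_{\vect s}=\ker\square^{s_1}$, making the seminorm on $X:=\Ac_{\lambda,s_1}+\ker\Dc_{\vect s}$ described in the statement well defined.

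The principal obstacle is refining the inclusion $H\subseteq\Ac_{\lambda\vect 1_r,\vect s}$ to $H\subseteq X$, since in general $\ker\Dc_{\vect s}\subsetneq\ker(\cdot*I^{-\vect s})$. To close this gap I would examine the images of $H$ and of $\Ac_{\lambda,s_1}$ in $\Hol(D)/\ker\Dc_{\vect s}$: both descend, through their Hausdorff quotients, to $G_T$-equivariant embeddings of the irreducible $G_T$-module $\widehat\Ac_{\lambda,s_1}\cong\Ac_{(\lambda+2s_1)\vect 1_r}$ (irreducibility being the content of Proposition~\ref{prop:4b}). Schur's lemma then forces these embeddings to have the same image, yielding $H+\ker\Dc_{\vect s}=\Ac_{\lambda,s_1}+\ker\Dc_{\vect s}=X$ and hence $H\subseteq X$. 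The density of $H$ in $X$ and the (proportional) matching of seminorms then follow: every element of $X$ differs from some $h\in H$ by an element of $\ker\Dc_{\vect s}$, which has $X$-seminorm zero, while the proportionality is inherited from the Hilbert isomorphism of Proposition~\ref{cor:1}.
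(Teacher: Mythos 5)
Your first two paragraphs track the paper's Step~I quite closely: the use of Proposition~\ref{prop:6} and Corollary~\ref{cor:6} to produce $V=\bigcap_{\vect t\in N}\ker\Dc_{\vect t}$, the $K_0$-averaging argument showing $H*I^{-\vect s}\neq 0$ for some $\vect s\in N$, the application of Proposition~\ref{cor:1}, the pigeonhole identification of $\epsb$, and the identities $\Ac_{\lambda\vect 1_r,\vect s}=\Ac_{\lambda,s_1}+\ker(\,\cdot\,*I^{-\vect s})$ and $\Ac_{\lambda,s_1}\cap\ker\Dc_{\vect s}=\ker\square^{s_1}$ are all correct. The problem is the step you yourself single out as the principal obstacle: your Schur argument does not close it. First, the image of $H$ in $\Hol(D)/\ker\Dc_{\vect s}$ is $H/(H\cap\ker\Dc_{\vect s})$, which is an extension of $H/(H\cap\ker(\,\cdot\,*I^{-\vect s}))\cong\widehat\Ac_{\lambda,s_1}$ by $(H\cap\ker(\,\cdot\,*I^{-\vect s}))/(H\cap\ker\Dc_{\vect s})$; the vanishing of this kernel term is essentially what has to be proved, so you may not assume that the image is the irreducible module $\widehat\Ac_{\lambda,s_1}$. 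Second, even if both images were known to be irreducible and mutually isomorphic, Schur's lemma only says that an equivariant map between them is zero or an isomorphism; it does not force two isomorphic irreducible submodules of $\Hol(D)/\ker\Dc_{\vect s}$ to coincide. For that you need a multiplicity-one statement, and this is exactly the hard analytic content of the paper's proof: it shows $\bigcap_{k\in K_0}\Ac^{(k)}_{\lambda\vect 1_r,\vect s}=\Ac_{\lambda,s_1}+\ker\Dc_{\vect s}$ by constructing, for each $k\in K_0$, a regular part $g_k\in\Ac_{\lambda+2s_1}$ and proving that $g_k\circ k^{-1}$ is independent of $k$, which rests on the Fourier-type description of Proposition~\ref{prop:1b} and the fact that $\Delta^{\vect s'}_{*}(k^*\,\cdot\,)\,\Delta^{\vect s'}_{*}(k'^*\,\cdot\,)$ is non-zero $I^{-(\lambda+2s_1)\vect 1_r}_{*}$-almost everywhere, so that $\Ac_{\lambda+2s_1}\cap\ker(\,\cdot\,*k_*I^{-\vect s'}*k'_*I^{-\vect s'})=\Set{0}$. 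Nothing in your proposal substitutes for this computation.

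There is a second, related gap at the very end. Even granting the set-theoretic inclusion $H\subseteq\Ac_{\lambda,s_1}+\ker\Dc_{\vect s}$, the asserted equivalence (resp.\ proportionality) of seminorms requires that the seminorm of $H$ vanish on $H\cap\ker\Dc_{\vect s}$, i.e.\ that $H\cap\ker\Dc_{\vect s}\subseteq V$; the isomorphism of Proposition~\ref{cor:1} only controls the quotient by the larger space $H\cap\ker(\,\cdot\,*I^{-\vect s})$. This is the content of the paper's Step~III, which in turn requires comparing the various $\vect s\in N$ with one another (its Step~II, showing that all elements of $N'$ determine the same $\epsb$); both of these are absent from your proposal.
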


Notice that, if $\ell=r$, then $H$ is a dense subspace of $\Ac_{\lambda,s_r} $, with the above notation, thanks to Proposition~\ref{prop:7}. In addition, all the  spaces described above are clearly (strongly decent, saturated, and) $\Aff$-$\Uc_{\lambda\vect 1_r}$-invariant with their seminorm by Proposition~\ref{prop:19} and Corollary~\ref{cor:6}.

\begin{proof}
	\textsc{Step I.} By Proposition~\ref{prop:6}, there is a closed $\Aff_0$-invariant subspace $V$ of $\Hol(D)$ such that $H\cap V$ is the closure of $\Set{0}$ in $H$ and the canonical mapping $H\to \Hol(D)/V$ is continuous. We may further assume that $V\subseteq H$, that is, that $H$ is saturated.
	Observe that  Corollary~\ref{cor:6} shows that $\Pc\cap V$ is dense in $V$ and that  $V=\bigcap_{\vect s\in N} \ker \Dc_{\vect s}$ for some subset $N$ of $\N_{\Omega}^*$. 
	In particular, for every $\vect s\in N$, the canonical linear mapping $H\to \Hol(D)/\ker \Dc_{\vect s}$ is continuous. Let $N'$ be the set of $\vect s\in N$ such  that this map is non-trivial, that is, such that $H\not \subseteq \ker \Dc_{\vect s}$. Observe that $N'\neq \emptyset$ since the seminorm of $H$ is non-trivial.
	
	Then, take $\vect s\in N'$. Let us first prove that $H\not \subseteq \ker (\,\cdot\,* (k_* I^{-\vect s}))$ for every $k\in K_0$. Indeed, assume by contradiction that $H  \subseteq \ker (\,\cdot\,* (k_* I^{-\vect s}))$ for some $k\in K_0$. Then, for every $k'\in K_0$, 
	\[
	H=H\circ  k k'^{-1}\subseteq \ker (\,\cdot\,* (k_* I^{-\vect s}))\circ k k'^{-1} =\ker (\,\cdot\,* (k'_* I^{-\vect s})),
	\]
	by the $\Aff_0$-invariance of $H$.
	By the arbitrariness of $k\in K_0$, this implies that $H\subseteq \ker \Dc_{\vect s}$, contrary to our choice of $\vect s$. 
	
	In particular, $H\not \subseteq \ker (\,\cdot\,* I^{-\vect s})$, so that Corollary~\ref{cor:1} implies that $\lambda\vect 1_r+2\vect s\in \Gc(\Omega')$, that $H\subseteq \Ac_{\lambda \vect 1_r,\vect s}$ continuously, and that the mapping $H/[H\cap \ker (\,\cdot\,*  I^{-\vect s}_\Omega)]\to \widehat \Ac_{\lambda\vect 1_r,\vect s}$ is an isomorphism (resp.\ a multiple of an isometry).
	By invariance, $H\subseteq  \Ac_{\lambda \vect 1_r,\vect s}^{(k)}\coloneqq \Set{f\in \Hol(D)\colon f\circ k\in \Ac_{\lambda \vect 1_r,\vect s}}$ for every $k\in K_0$, so that $H\subseteq \bigcap_{k\in K_0} \Ac_{\lambda \vect 1_r,\vect s}^{(k)}$. Let us prove that 
	\[
	\bigcap_{k\in K_0} \Ac_{\lambda \vect 1_r,\vect s}^{(k)}=\Ac_{\lambda ,s_r }+\ker \Dc_{\vect s}
	\]
	as vector spaces.
	Observe first that there is $\epsb\in \Set{0,1}^r$ such that $\lambda\vect 1_r+2\vect s\succ_\epsb \frac 1 2 \vect m'^{(\epsb)}$. Since 
	\[
	\vect m'^{(\epsb)}= \left(a \sum_{k>j} \eps_k\right)_j,
	\]
	and since $s_1\meg \cdots\meg s_r$,	this implies that there is $\ell\in \Set{0,\dots, r}$ such that $\eps_k=0$ for $k=1,\dots, r-\ell$ and $\eps_k=1$ for $k=r-\ell+1,\dots, r$. In particular, $\vect m'^{(\epsb)}=(a\min(r-j,\ell) )_j$ and
	\[
	(\lambda+2 s_1)\vect 1_r\succ_\epsb \frac 1 2 \vect m'^{(\epsb)},
	\]
	so that $\lambda+2 s_1\in \Wc(\Omega)$. In addition, setting $\vect s'\coloneqq \vect s- s_1\vect 1_r\in \N_{\Omega}^*$, Proposition~\ref{prop:2b} implies that   the mapping $f\mapsto f* I^{-\vect s'}$  induces a canonical isomorphism from $ \Ac_{\lambda+2 s_1}$ onto $ \Ac_{\lambda \vect 1_r+2\vect s} $ which is a multiple of an isometry and intertwines $\Uc_{(\lambda+2 s_1) \vect 1_r}$ and  $\Uc_{\lambda \vect 1_r + 2 \vect s}$.
	
	Then, take $f\in \bigcap_{k\in K_0} \Ac_{\lambda \vect 1_r,\vect s}^{(k)}$. By the preceding remarks, for every $k\in K_0$ there is a unique $g_k\in \Ac_{\lambda +2s_1}$ such that
	\[
	(f\circ k)*I^{-\vect s}=g_k*I^{-\vect s'},
	\]
	so that  
	\[
	(\square^{s_1}f)\circ k-g_k=\square^{s_1}(f\circ k)-g_k\in \ker(\,\cdot\,* I^{-\vect s'}).
	\]
	Then, for every $k\in K_0$,
	\[
	\square^{s_1}f-g_k\circ k^{-1}\in \ker (\,\cdot\,* k_* I^{-\vect s'}),
	\]
	so that, for every $k,k'\in K_0$,
	\[
	g_k\circ k^{-1}-g_{k'}\circ k'^{-1}\in \ker (\,\cdot\,* k_* I^{-\vect s'})+\ker (\,\cdot\,* k'_* I^{-\vect s'})\subseteq \ker (\,\cdot\,* k_* I^{-\vect s'}* k'_* I^{-\vect s'}).
	\]
	Now, let us prove that $\Ac_{\lambda+2 s_1}\cap \ker (\,\cdot\,* k_* I^{-\vect s'}_\Omega* k'_* I^{-\vect s'}_\Omega)=\Set{0}$. With the notation of Proposition~\ref{prop:1b}, observe that
	\begin{equation}\label{eq:1b}
	\Pc_{(\lambda+2 s_1)\vect 1_r}(\tau)*k_* I^{-\vect s'}* k'_* I^{-\vect s'}= (-1)^{s'_1+\cdots +s'_r}\Pc_{(\lambda+2 s_1)\vect 1_r}(\tau \Delta^{\vect s'}_{*}(k^*\,\cdot\,) \Delta^{\vect s'}_{*}(k'^*\,\cdot\,) )
	\end{equation}
	for every $\tau\in L^2_{(\lambda+2 s_1)\vect 1_r}(\overline{\Omega})$.
	Now, observe that $\Ac^{(k)}_{\lambda+2 s_1}=\Ac_{\lambda+2 s_1}$ by Proposition~\ref{prop:19}, so that the mapping $f\mapsto f* k_* I^{-\vect s'}$ induces an isomorphism of $\Ac_{\lambda+2 s_1}$ onto $\Ac_{\lambda \vect 1_r+\vect s}^{(k)}$, thanks to Proposition~\ref{prop:2b}, applied choosing the Jordan frame $k e_1,\dots, k e_r $ instead of $e_1,\dots,e_r$.
	In particular, $\Ac_{\lambda+2 s_1}\cap \ker (\,\cdot\,* k_* I^{-\vect s'}_\Omega)=\Set{0}$, so that 	Proposition~\ref{prop:1b} shows that $\Delta^{\vect s'}_{*}(k^*\,\cdot\,) $ is non-zero $I^{-(\lambda+2 s_1)\vect 1_r}_{*}$-almost everywhere. Analogously, one proves that $\Delta^{\vect s'}_{*}(k'^*\,\cdot\,) $ is non-zero $I^{-(\lambda+2 s_1)\vect 1_r}_{*}$-almost everywhere. Therefore, $\Delta^{\vect s'}_{*}( k^*\,\cdot\,)  \Delta^{\vect s'}_{*}(k'^*\,\cdot\,)$ is non-zero $I^{-(\lambda+2 s_1)\vect 1_r}_{*}$-almost everywhere. Therefore, Proposition~\ref{prop:1b} and~\eqref{eq:1b} imply that  $\Ac_{\lambda+2 s_1}\cap \ker (\,\cdot\,* k_* I^{-\vect s'}* k'_* I^{-\vect s'})=\Set{0}$. 
	
	We have thus proved that $g_k\circ k^{-1}=g_{k'}\circ k'^{-1} $ for every $k,k'\in K_0$. Call $g$ their common value. Then, $g\in \Ac_{\lambda+2 s_1}$ and
	\[
	\square^{s_1}f -g\in \bigcap_{k\in K_0} \ker (\,\cdot\,* k_* I^{-\vect s'}_\Omega)=\ker \Dc_{\vect s'}.
	\]
	Since $\ker \Dc_{\vect s}=\Set{h\in \Hol(D)\colon \square^{s_1}h \in \ker \Dc_{\vect s'}}$ (cf.~\cite[Theorem 9.4]{Treves}), this implies that
	\[
	f\in \Ac_{\lambda,s_1}+ \ker \Dc_{\vect s}.
	\]
	Conversely, it is clear that $\Ac_{\lambda, s_1}+ \ker \Dc_{\vect s}\subseteq \Ac_{\lambda \vect 1_r, \vect s}^{(k)}$ for every $k\in K_0$.
	We have thus proved that 
	\[
	H\subseteq \Ac_{\lambda,s_1}+ \ker \Dc_{\vect s}
	\]
	continuously, whenever $\vect s\in N'$. 

	\textsc{Step II.} Now, let us prove that, if $\vect s'\in N'$ and $\lambda+2\vect s'\succ_{\epsb'} \frac 1 2 \vect m'^{(\epsb')}$ for some $\epsb'\in \Set{0,1}^r$, then $\epsb'=\epsb$. Indeed, assume by contradiction that $\epsb'\neq \epsb$, and take $\ell'\in \Set{0,\dots, r}$ so that $\eps'_k=0$ for $k=1,\dots , r-\ell'$ and $\eps'_k=1$ for $k=r-\ell'+ 1,\dots, r$.  Up to shifting the roles of $\vect s$ and $\vect s'$, we may assume that $\ell'<\ell$, so that, in particular, $\eps_1'=0$. Then, $s'_1=\frac 1 2 m'^{(\epsb')}_1=\frac 1 2 a\ell'< \frac 1 2 a\ell=s_1$ if $\ell<r$, and $s'_1=\frac 1 2 a\ell'\meg \frac 1 2 a(r-1)<s_1$ if $\ell=r$, so that $s'_1<s_1$ in both cases. Consequently, $s'_{j}=\frac 1 2 a \ell'<s_1\meg s_{j}$ for every $j=1,\dots,r-\ell'$. In addition, take $\vect s''\in (\vect s+\N_\Omega^*)\cap (\vect s'+\N_\Omega^*)$ so that $s''_{j}=s_{j}$ for every $j=1,\dots,r-\ell<r-\ell'$, and observe that  Corollary~\ref{cor:8} and Proposition~\ref{prop:10} imply that
	\[
	H\subseteq \Ac_{\lambda,s'_1}+ \ker \Dc_{\vect s'} \subseteq \ker \Dc_{s_1\vect 1_r}+\ker \Dc_{\vect s'}\subseteq \ker \Dc_{\vect s''}.
	\]
	Now, observe  that $\lambda \vect 1_r+2\vect s'' \succ_{\epsb} \frac 1 2 \vect m'^{(\epsb)}$. Since the canonical mapping $H/(H\cap \ker (\,\cdot\,*I^{-\vect s}))\to \widehat \Ac_{\lambda\vect 1_r,2\vect s}$ is onto, we see that $H*I^{-\vect s}=\Ac_{\lambda\vect 1_r+2\vect s} $. Consequently, Corollary~\ref{cor:8} and Proposition~\ref{prop:10} imply that $\Set{0}=H*I^{-\vect s'' }= \Ac_{\lambda\vect 1_r+2 \vect s}*I^{-(\vect s''-\vect s)}=\Ac_{\lambda\vect 1_r+2\vect s''}$: contradiction.
	
	\textsc{Step III.} Set $\lambda'\coloneqq \min_{\vect s\in N'} s_1$, and observe that the preceding remarks show that $\lambda+2 \lambda'\in \Wc(\Omega)$. More precisely, $\lambda+2 \lambda'>m/r-1$ if $\ell=r$, and $\lambda+2 \lambda'=a\ell/2$ otherwise. Let us prove  that $\Ac_{\lambda,\lambda'}+\ker \Dc_{\vect s}=\Ac_{\lambda,s_1}+\ker \Dc_{\vect s}$ for every $\vect s\in N'$. Indeed, this is obvious if $\ell<r$, in which case $s_1=a \ell/2=\lambda'$ for every $\vect s\in N'$. If, otherwise, $\ell=r$, then the assertion follows from Proposition~\ref{prop:10} and the fact that $\Ac_{\lambda,\lambda'}+\ker  \square^{s_1}=\Ac_{\lambda,s_1}$ as a consequence of   Corollary~\ref{cor:8} and~\cite[Theorem 9.4]{Treves}. 
	
	Let us now prove that $H\cap \ker (\,\cdot\, *I^{-\vect s})\subseteq V$ for every $\vect s\in N'$. To see this, it will suffice to prove that
	\[
	H\cap \ker (\,\cdot\, *I^{-\vect s})\subseteq \ker \Dc_{\vect s'}
	\]
	for every $\vect s'\in N$. If $\vect s'\not \in N'$, then $H\subseteq \ker \Dc_{\vect s'}$, so that $H\cap \ker (\,\cdot\, *I^{-\vect s})\subseteq \ker \Dc_{\vect s'}$. Then,  take $\vect s'\in N'$  and  $f\in H\cap \ker (\,\cdot\,* I^{-\vect s})$. Since $H\subseteq \Ac_{\lambda,s_1'}+\ker \Dc_{\vect s'}=\Ac_{\lambda,\lambda'}+\ker \Dc_{\vect s'}$, there are $f'\in \Ac_{\lambda,\lambda'}$ and $g\in \ker \Dc_{\vect s'}$  such that $f=f'+g$. Then, setting $\vect s''=\vect s+\vect s'-\lambda'\vect 1_r$, and applying Proposition~\ref{prop:10},
	\[
	f'=f-g\in \Ac_{\lambda,\lambda'}\cap (\ker (\,\cdot\, *I^{-\vect s})+\ker (\,\cdot\, *I^{-\vect s'}))\subseteq \Ac_{\lambda,\lambda'} \cap \ker (\,\cdot\,* I^{-\vect s''}).
	\] 
	Since Corollary~\ref{cor:8} shows that the mapping $f\mapsto \ker (\,\cdot\,* I^{-(\vect s''-\lambda'\vect 1_r)})$ induces an isomorphism of $\Ac_{\lambda+2\lambda'}$ onto $\Ac_{\lambda,\vect s''}$, this proves that $f'\in \ker \square^{\lambda'}$.  
	Since $\vect s'-\lambda' \vect 1_r\in \N_{\Omega}^*$ by the definition of $\lambda'$, Proposition~\ref{prop:10} then shows that $f=f'+g\in \ker \Dc_{\vect s'}$. The arbitrariness of $\vect s'$ then shows that $H\cap \ker (\,\cdot\, *I^{-\vect s})\subseteq V$ for every $\vect s\in N'$. Since~\textsc{step I} shows that $V\subseteq \ker (\,\cdot\, *I^{-\vect s})$, this proves that   $H\cap \ker(\,\cdot\,*I^{-\vect s})=V$ for every $\vect s\in N'$.	
	In addition,~\textsc{step I} shows that the canonical mapping $H/V=H/(H\cap \ker (\,\cdot\,*I^{-\vect s}))\to \widehat \Ac_{\lambda\vect 1_r,\vect s}\cong \widehat \Ac_{\lambda,\lambda'}$ is an isomorphism (resp.\ a multiple of an isometry), so that $H\subseteq \Ac_{\lambda,\lambda'}+\ker \Dc_{\vect s}$ with an equivalent (resp.\ proportional) seminorm for every $\vect s\in N'$. 	
\end{proof}

\section{M\"obius-Invariant Spaces on Irreducbile Symmetric Siegel Domains}\label{sec:4}

Recall that we denote by $G$ the group of the biholomorphisms of $D$, and by $G_0$ the identity component of $G$. 
Notice that $G=G_0 \Aff$ (c.f., e.g.,~\cite[Remark 1]{Nakajima}).

In this case, $G_0$ is a simple group, so that none of the representations $\Uc_{\vect s}$ may be extended to $G_0$. We shall therefore only consider the representations $\widetilde U_\lambda$ (and also the ray representations $U_\lambda$), cf.~Definition~\ref{def:1b}.

\begin{oss}\label{oss:1}
	Let us mention that in, e.g.,~\cite{ArazyFisher2,Arazy} some   integrability assumptions were considered instead of our strong decency assumptions. Let us say that a semi-Hilbert subspace $H$ of $\Hol(D)$ satisfies condition $(W\!I)_\lambda$ if: (1) $\widetilde U_\lambda(\phi)$ induces an automorphism of $H$ for every $\phi\in \widetilde G$; (2) $\widetilde U_\lambda$ induces a continuous representation of the stabilizer $\widetilde K$ of $(0,i e_\Omega)$ in $\widetilde G$; (3) the operator $\int_{\widetilde K} \widetilde U_\lambda(\phi)\,\dd \mi(\phi)$, defined as a weak integral with values in $\Lin(\Hol(D))$ endowed with the strong topology, induces an endomorphism of $H$ for every (Radon) measure with compact support in $\widetilde K$; (4) $\langle \int_{\widetilde K} \widetilde U_\lambda(\phi)f\,\dd \mi(\phi) \vert g \rangle_H=\int_{\widetilde K} \langle\widetilde U_\lambda(\phi)f\vert g\rangle_H\,\dd \mi(\phi)$ for every Radon measure $\mi$ with compact support in $\widetilde K$ and for every $f,g\in H$.\footnote{Notice that some of these conditions are stated in a somewhat implicit way in~\cite{ArazyFisher2,Arazy}. Here we added those conditions that do not seem to appear in~\cite{ArazyFisher2,Arazy} but are nonetheless required in the proofs.}
	
	As showed in~\cite[Propositions 2.14 and 6.2]{Rango1} when $r=1$, condition $(W\!I)_\lambda$ holds if and only if $H$ is $\widetilde G$-$\widetilde U_\lambda$-invariant, strongly decent, and saturated. With a similar argument, one may show that condition $(W\!I)_\lambda$ implies that $H$ is strongly decent (and that $H+V$ is strongly decent and saturated, where $V$ is the closure in $\Hol(D)$ of the closure of $\Set{0}$ in $H$), and that if $H$ is $\widetilde G$-$\widetilde U_\lambda$-invariant, strongly decent, and saturated, then condition $(W\!I)_\lambda$ holds.
	
	Since, however, the proof of~\cite[Theorems 5.2]{Arazy} seems to be incomplete under the sole assumption $(W\!I)_\lambda$ (unless $r=1$ or a saturation assumption is added), there appears to be no loss of generality if we consider strongly decent and saturated spaces only.
\end{oss}

The following result is essentially a particular case of~\cite[Theorem 3]{ArazyFisher2}. It may also be seen as a consequence of Propositions~\ref{prop:4b} and~\ref{prop:19}. Cf.~also~\cite{Ishi6} for the case in which $D$ is a bounded homogeneous domain.

\begin{prop}\label{prop:20}
	Take $\lambda\in \R$. If $\lambda\in \Wc(\Omega)$, then $\Ac_{\lambda}$ is $G$-$U_\lambda$-invariant with its norm.
	
	Conversely, if $H$ is a non-trivial Hilbert space which is continuously embedded in $\Hol(D)$ and in which  $U_\lambda$ induces a bounded (resp.\ isometric) representation of $G_T$, then $\lambda\in \Wc(\Omega)$ and $H=\Ac_\lambda$ with equivalent (resp.\ proportional) seminorms.
\end{prop}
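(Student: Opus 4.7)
My plan for the first assertion is to recycle and slightly extend the argument from the proof of Proposition~\ref{prop:19}. Starting from the fundamental invariance~\eqref{eq:2b} of the unweighted Bergman kernel $B^{-p\vect 1_r}$ under every $\phi\in G(D)$---not just affine automorphisms---I would raise both sides to the $\lambda/p$-th power. Since $J\phi^{-1}$ is a nowhere-vanishing holomorphic function on the simply connected domain $D$, this operation is licit up to a global unimodular constant, and one obtains $[U_\lambda(\phi)\otimes \overline{U_\lambda(\phi)}]B^{-\lambda\vect 1_r}=c_\phi B^{-\lambda\vect 1_r}$ for some unimodular $c_\phi$ and every $\phi\in G(D)$. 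By the discussion at the end of Subsection~\ref{sec:2:8}, this is exactly what is required for the ray representation $U_\lambda$ to act on $\Ac_\lambda$ by unitaries. In fact, the proof of Proposition~\ref{prop:19} already establishes this for arbitrary $\phi\in G(D)$; the present statement simply records that stronger conclusion.

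For the converse, my idea is to reduce to Proposition~\ref{prop:4b} via a magnitude/phase argument on the affine group. The key observation is that for every $\phi\in \Aff$, the Jacobian $J\phi^{-1}$ is constant on $D$ (as $\phi^{-1}$ is affine), so $(J\phi^{-1})^{\lambda/p}$ and $\abs{J\phi^{-1}(0,0)}^{\lambda/p}$ differ by a unimodular constant, and consequently $U_\lambda(\phi)f$ and $\Uc_{\lambda\vect 1_r}(\phi)f$ differ by a unimodular scalar. Therefore, for every $\phi\in G_T\subseteq \Aff$, the operator norms on $H$ coincide,
\[
\norm{U_\lambda(\phi)}_{\Lin(H)}=\norm{\Uc_{\lambda\vect 1_r}(\phi)}_{\Lin(H)},
\]
so boundedness (resp.\ isometricity) of $U_\lambda$ as a representation of $G_T$ on $H$ is equivalent to the same property for $\Uc_{\lambda\vect 1_r}$. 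Since $H$ is a Hilbert space continuously embedded in $\Hol(D)$, it is automatically a RKHS, and I would invoke Proposition~\ref{prop:4b} with $\vect s=\lambda\vect 1_r$ to deduce $\lambda\vect 1_r\in \Gc_*(\Omega)$---equivalently $\lambda\in \Wc(\Omega)$ by Definition~\ref{def:4}---together with $H=\Ac_{\lambda\vect 1_r}=\Ac_\lambda$, with equivalent (resp.\ proportional) norms.

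I do not foresee substantial obstacles, as the proposition is a near-corollary of earlier material. The only delicate point is the handling of the ray representation $U_\lambda$: one must verify that boundedness and isometricity of a ray representation are well-defined notions, and that on the affine subgroup $\Aff$ the expressions for $U_\lambda$ and $\Uc_{\lambda\vect 1_r}$ really differ only by a unimodular factor, which crucially relies on the constancy of the Jacobian of affine maps.
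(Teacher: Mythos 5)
Your proposal is correct and follows essentially the same route the paper indicates: the first assertion is already contained in Proposition~\ref{prop:19} (whose proof establishes $[U_\lambda(\phi)\otimes \overline{U_\lambda(\phi)}] B^{-\lambda \vect 1_r}= c_\phi B^{-\lambda \vect 1_r}$ for all $\phi\in G(D)$), and the converse reduces to Proposition~\ref{prop:4b} via the observation that $\abs{U_\lambda(\phi)f}=\abs{\Uc_{\lambda\vect 1_r}(\phi)f}$ for affine $\phi$, which is exactly the derivation the paper cites ("a consequence of Propositions~\ref{prop:4b} and~\ref{prop:19}"). No gaps.
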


\subsection{The Case of Tube Domains}

In this section we extend~\cite{Garrigos}. Notice that the fact that $\Ac_\lambda$ is $G$-$U_\lambda$-invariant for $\lambda\in \Wc(\Omega)$ is contained in Proposition~\ref{prop:20}. 

\begin{teo}\label{teo:2}\label{teo:4}
	Take $\lambda\in \R$. 
	If $\lambda\in m/r-1-\N$, then $\Ac_{\lambda,m/r-\lambda}$ is $G$-$U_\lambda$-invariant with its seminorm.
	
	Conversely, let $H$ be a non-trivial strongly decent and saturated semi-Hilbert space of holomorphic functions on $D$ in which $U_\lambda$ induces a bounded (resp.\ isometric) ray representation of $G_0$. Then, either one of the following hold:
	\begin{itemize}
		\item $\lambda\in \Wc(\Omega)$ and $H=\Ac_{\lambda }$ with equivalent (resp.\ proportional) norms;
		
		\item $\lambda\in m/ r-1-\N$ and $H=\Ac_{\lambda, m /r-\lambda}$ with   equivalent (resp.\ proportional) seminorms. 
	\end{itemize}
\end{teo}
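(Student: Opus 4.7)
The plan is to combine the $\Aff_0$-invariant classification of Theorem~\ref{prop:11} with the additional constraint of invariance under the involution $\iota$ of Proposition~\ref{prop:15}(2), using that $G_0=\langle \Aff_0,\iota\rangle$ and $G=G_0\,\Aff$. The central analytic input is the \emph{Bol identity} for symmetric tube domains: for every $f\in\Hol(D)$,
\[
\square^{\lambda'}\bigl(U_\lambda(\iota) f\bigr)=c_{\lambda,\lambda'}\,U_{\lambda+2\lambda'}(\iota)\bigl(\square^{\lambda'}f\bigr),
\]
where $c_{\lambda,\lambda'}$ is a unimodular constant which is non-zero precisely when $\lambda+\lambda'=m/r$. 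In rank one this is the classical Bol lemma on the upper half-plane; for general tube $D$ it is proved in~\cite{Garrigos} by direct computation from $J\iota=\Delta^{-p\vect 1_r}$ (Proposition~\ref{prop:15}(3), with $n=0$), the Jordan inversion $\iota(z)=-z^{-1}$, and the product rule of Lemma~\ref{lem:8}.

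For the forward direction, take $\lambda\in m/r-1-\N$ and set $\lambda'=m/r-\lambda$, a positive integer satisfying $\lambda+\lambda'=m/r$ and $\lambda+2\lambda'>m/r-1$ (continuous Wallach set). By Proposition~\ref{prop:20}, $\Ac_{\lambda+2\lambda'}$ is $G$-$U_{\lambda+2\lambda'}$-invariant, and by Proposition~\ref{prop:19}, $\Ac_{\lambda,\lambda'}$ is $\Aff$-$\Uc_{\lambda\vect 1_r}$-invariant. From the characterization $\Ac_{\lambda,\lambda'}=\{f:\square^{\lambda'}f\in\Ac_{\lambda+2\lambda'}\}$ and Bol's identity with $c_{\lambda,\lambda'}\neq 0$, one reads off $U_\lambda(\iota)\Ac_{\lambda,\lambda'}\subseteq\Ac_{\lambda,\lambda'}$, together with the isometric identity $\|\square^{\lambda'}(U_\lambda(\iota)f)\|_{\Ac_{\lambda+2\lambda'}}=|c_{\lambda,\lambda'}|\cdot\|U_{\lambda+2\lambda'}(\iota)(\square^{\lambda'}f)\|_{\Ac_{\lambda+2\lambda'}}=\|\square^{\lambda'}f\|_{\Ac_{\lambda+2\lambda'}}$ by unitarity of $U_{\lambda+2\lambda'}(\iota)$ and $|c_{\lambda,\lambda'}|=1$. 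Combined with the $\Aff$-invariance, this upgrades to $G$-$U_\lambda$-invariance.

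For the converse, $H$ is in particular $\Aff_0$-$\Uc_{\lambda\vect 1_r}$-invariant, so Theorem~\ref{prop:11} provides $\ell\in\{0,\ldots,r\}$ and $\vect s\in\N_\Omega^*$ such that $H$ is dense (with equivalent or proportional seminorm) in $\Ac_{\lambda,s_1}+\ker\Dc_{\vect s}$. By Proposition~\ref{prop:6} and saturation, the closure $V_H$ of $\{0\}$ in $H$ is a closed subspace of $\Hol(D)$, $\Aff_0$-invariant (Corollary~\ref{cor:6}), and $\iota$-invariant by continuity of the $G_0$-action. Computing $U_0(\iota)$ on the polynomial generators $\Delta^{\vect s'}$ of the homogeneous components $\Pc_{\vect s'}$ of $V_H\cap\Pc$ by means of Proposition~\ref{prop:3b} and Proposition~\ref{prop:15}(3), one shows that $\iota$-compatibility with the $\Aff_0$-grading forces $\vect s$ to be proportional to $\vect 1_r$; hence $\vect s=\lambda'\vect 1_r$, $V_H=\ker\square^{\lambda'}$, and $H$ is dense in $\Ac_{\lambda,\lambda'}$ with $\lambda+2\lambda'\in\Wc(\Omega)$. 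If $\lambda'=0$ this yields the first case. If $\lambda'\geq 1$, density and continuity propagate $\iota$-invariance to all of $\Ac_{\lambda,\lambda'}$, so the $G_0$-action descends through $\square^{\lambda'}$ to a $G_0$-action on $\Ac_{\lambda+2\lambda'}$ that restricts to $\Uc_{(\lambda+2\lambda')\vect 1_r}$ on $\Aff_0$; uniqueness of the ray-representation extension (comparing to the $G$-invariant action from Proposition~\ref{prop:20}) implies $\square^{\lambda'}\circ U_\lambda(\iota)=\omega\,U_{\lambda+2\lambda'}(\iota)\circ\square^{\lambda'}$ on $\Ac_{\lambda,\lambda'}$ for some $\omega\in\T$. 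Matching this with Bol's identity on its natural domain, whose constant $c_{\lambda,\lambda'}$ is non-zero only when $\lambda+\lambda'=m/r$, forces $\omega=c_{\lambda,\lambda'}\neq 0$, hence $\lambda\in m/r-1-\N$.

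The main obstacle is the precise establishment of Bol's identity in the general tube setting, together with the non-vanishing criterion for $c_{\lambda,\lambda'}$ and the $\iota$-invariance analysis of the polynomial grading of $V_H$; both steps closely parallel the computations of~\cite{Garrigos}, to which the argument ultimately reduces.
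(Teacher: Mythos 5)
Your forward direction is essentially the paper's: Proposition~\ref{prop:4} is exactly the ``Bol identity'' $U_{2m/r-\lambda}(\iota)\square^{m/r-\lambda}=\square^{m/r-\lambda}U_\lambda(\iota)$ (proved, as you anticipate, by the computation of \cite{Garrigos} on the spaces $\Pc_{\vect s}$), and combining it with the $G$-$U_{2m/r-\lambda}$-invariance of $\Ac_{2m/r-\lambda}$ gives the first assertion. (A small imprecision: the intertwining relation does not ``hold with a constant that vanishes off $\lambda+\lambda'=m/r$''; off that hyperplane no constant makes it true, because the Pochhammer factors produced by $\square^{\lambda'}$ on $\Pc_{\vect s}$ before and after composition with $\iota$ are genuinely different polynomials in $\vect s$.)

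The converse, however, has a genuine gap at its central step. You claim that $\iota$-invariance of the null space $V_H$, tested on the polynomial generators $\Delta^{\vect s'}$, forces the parameter $\vect s$ of Theorem~\ref{prop:11} to be proportional to $\vect 1_r$ and hence $V_H=\ker\square^{\lambda'}$. This is false: Proposition~\ref{prop:12} shows that the closed $G_0$-$U_\lambda$-invariant (indeed $G$-$U_\lambda$-invariant, hence $\iota$-invariant) subspaces of $\Hol(D)$ are precisely $\Set{0}$, $\Hol(D)$, and the spaces $V_{\lambda,k}=\bigcap_{\vect s\in N_{\lambda,k}}\ker\Dc_{\vect s}$ for $k$ in the set $K_\lambda=\Set{k\colon \frac{a(k-1)}{2}-\lambda\in\N}$, which typically has several elements (e.g.\ $a=1$, $\lambda=-1$, $r\Meg 3$ gives $k\in\Set{1,3,\dots}$); only $k=r$ yields $\ker\square^{m/r-\lambda}$. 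So $\iota$-invariance of $V_H$ alone only places $V_H$ among the $V_{\lambda,k}$, and the decisive work is to exclude $k<r$. That exclusion cannot come from the invariance of the subspace: it requires the Hilbert seminorm. In the paper this is done by contradiction: if $V=V_{\lambda,k}$ with $k<r$, Theorem~\ref{prop:11} forces $\lambda\vect 1_r+2\vect s\succ_\epsb \frac12\vect m'^{(\epsb)}$ for the minimal $\vect s\in N_{\lambda,k}$ (which has $s_1=\dots=s_{r-k}=0$), and comparing the resulting identity $\lambda=\frac12 m'_{r-\ell}$ with $\lambda\meg\frac12 m'_{r-k+1}$ and with $s_{r-\ell}=0$ yields both $\ell\meg k-1$ and $k\meg\ell$, a contradiction. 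Your subsequent uniqueness-of-extension argument is then built on the unproved identification $V_H=\ker\square^{\lambda'}$, so the converse does not go through as written.
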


This result partially extends~\cite{Garrigos} to the case $\lambda\neq 0$. This result also extends~\cite[Theorem 5.2]{Arazy} for the case of tube domains, because of Remark~\ref{oss:1}. Notice that we do not assume that the $U_\lambda(\phi)$ are isometries on $H$.

In order to prove the main result of this section, we need two propositions, which are both interesting in their own right. 
The first one shows that $U_\lambda$ and $U_{2 m/r-\lambda}$  are intertwined (up to a unimodular constant) by $\square^{m/r-\lambda}$ when $\lambda\in m/r-1-\N$, and is a consequence of~\cite[Theorem 6.4]{Arazy}. 
As we shall see later, the analogous assertion does \emph{not} hold when $n>0$.

The second one characterizes the closed $G_0$-$U_\lambda$-invariant subspaces of $\Hol(D)$.

\begin{prop}\label{prop:4}
	Take $\lambda\in m/r-1-\N$. Then, 
	\[
	U_{2m/r-\lambda}(\phi)\square^{m/r-\lambda} f=\square^{m/r-\lambda} U_\lambda(\phi) f
	\]
	for every $\phi\in G(D)$ and for every $f\in \Hol(D)$ (equality  in $\Hol(D)/\T$).  
\end{prop}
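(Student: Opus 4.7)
By Proposition~\ref{prop:15}(2), $G(D)$ is generated by $\Aff$ together with the involution $\iota$. Since the intertwining relation, once established modulo $\T$ for two elements $\phi_1,\phi_2\in G(D)$, also holds for their composition (both $U_\lambda$ and $U_{2m/r-\lambda}$ are ray representations, so that $U_\mu(\phi_1\phi_2)=U_\mu(\phi_1)U_\mu(\phi_2)$ modulo $\T$), it suffices to verify the identity separately for $\phi\in\Aff$ and for $\phi=\iota$.

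\textbf{Affine case.} Note that the hypothesis $\lambda\in m/r-1-\N$ is exactly the statement that $\lambda'\coloneqq m/r-\lambda$ is a positive integer. Applying Lemma~\ref{lem:7} with this $\lambda'$, we obtain the identity
\[
\Uc_{(2m/r-\lambda)\vect 1_r}(\phi)\,\square^{m/r-\lambda}f=\square^{m/r-\lambda}\,\Uc_{\lambda\vect 1_r}(\phi)f
\]
in $\Hol(D)$ for every $\phi\in\Aff$ and every $f\in\Hol(D)$. For affine $\phi$, the function $J\phi^{-1}$ is a non-zero constant, so $U_\mu(\phi)$ and $\Uc_{\mu\vect 1_r}(\phi)$ differ only by the unimodular scalar $(J\phi^{-1}/|J\phi^{-1}|)^{\mu/p}$, for every $\mu\in\R$. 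Since $\square$ is linear and commutes with multiplication by constants, the equality above implies the statement of the proposition for all $\phi\in\Aff$.

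\textbf{Inversion.} The remaining case $\phi=\iota$ is the crux, and is essentially the content of~\cite[Theorem 6.4]{Arazy}. In the tube setting $n=0$ and $p=2m/r$, so by Proposition~\ref{prop:15}(2)--(3) we have $\iota(z)=-z^{-1}$ and $J\iota(z)=\Delta^{-p\vect 1_r}(z)$, whence
\[
U_\lambda(\iota)f(z)=f(-z^{-1})\,\Delta^{-\lambda\vect 1_r}(z)
\]
up to a unimodular factor. The proposition for $\phi=\iota$ then reduces to the identity
\[
\square^{m/r-\lambda}\bigl[f(-z^{-1})\,\Delta^{-\lambda\vect 1_r}(z)\bigr]=c\,\Delta^{-(2m/r-\lambda)\vect 1_r}(z)\,(\square^{m/r-\lambda}f)(-z^{-1})
\]
for some unimodular constant $c$; this is the conjugation formula for the invariant differential operator $\square^{m/r-\lambda}$ under the Jordan-algebraic inversion furnished by the cited result of Arazy.

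\textbf{Main obstacle.} The affine step is routine bookkeeping on top of Lemma~\ref{lem:7}. The real difficulty is the inversion step: since $\iota$ is non-linear, $\square^{m/r-\lambda}$ does not transform by a simple multiplier, and one must perform a Leibniz-type calculation in the Faraut--Koranyi calculus, leveraging Lemma~\ref{lem:8} for the action of $\square^h$ on $\Delta^{\vect s}$, and verifying that the various cross-terms produced by $\square^{m/r-\lambda}$ applied to $f(-z^{-1})\,\Delta^{-\lambda\vect 1_r}(z)$ collapse to the required form. The integrality of $m/r-\lambda$ (equivalent to $\lambda\in m/r-1-\N$) is essential so that $\square^{m/r-\lambda}$ is a well-defined differential operator of finite order whose conjugate by $\iota$ can be written as a finite sum.
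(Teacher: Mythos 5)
Your reduction is the same as the paper's: decompose $G(D)$ into $\Aff$ and the inversion $\iota$ via Proposition~\ref{prop:15}, handle the affine case by Lemma~\ref{lem:7} (your remark that $U_\mu(\phi)$ and $\Uc_{\mu\vect 1_r}(\phi)$ differ by a unimodular constant for affine $\phi$ is correct and needed), and reduce the whole statement to the single conjugation identity for $\phi=\iota$. Up to that point the argument is fine.

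The gap is that the conjugation identity itself — the step you yourself call ``the crux'' — is asserted rather than proved. You defer it to~\cite[Theorem 6.4]{Arazy}, but the paper explicitly warns that this is not a legitimate shortcut: Arazy's result is stated for the circular bounded realization of $D$, and transferring it to the Siegel-domain picture is ``not straightforward'' (the Cayley transform does not simply conjugate $\square$ into the corresponding operator on the bounded domain). Your alternative suggestion, a ``Leibniz-type calculation'' on $f(-z^{-1})\Delta^{-\lambda\vect 1_r}(z)$ with cross-terms collapsing, is not carried out and is not how one actually closes the argument. The paper's proof instead avoids any Leibniz expansion: it restricts to a single $K$-type $q\in\Pc_{\vect s}$, where two exact eigenvalue formulas are available — the identity
\[
\square^{m/r-\lambda}\bigl[(q\circ\iota)B_0^{-\lambda\vect 1_r}\bigr]=(-2i)^{r\lambda-m}\Bigl(\vect s+\Bigl(\tfrac mr-1\Bigr)\vect 1_r-\tfrac 12\vect m\Bigr)_{(m/r-\lambda)\vect 1_r}(q\circ\iota)B_0^{-(m/r)\vect 1_r}
\]
from the proof of~\cite[Lemma 3.8]{Garrigos}, and $\Delta^{(m/r-\lambda)\vect 1_r}\square^{m/r-\lambda}q=\bigl(\vect s+(\tfrac mr-1)\vect 1_r-\tfrac 12\vect m\bigr)_{(m/r-\lambda)\vect 1_r}\,q$ from~\cite[Lemma XIV.2.1]{FarautKoranyi} — and observes that the same Pochhammer factor appears on both sides; combining these with $\Delta^{(m/r-\lambda)\vect 1_r}\circ\iota=(2i)^{r\lambda-m}B_0^{(\lambda-m/r)\vect 1_r}$ (Proposition~\ref{prop:3b}) gives the identity on polynomials, and density of polynomials in $\Hol(D)$ finishes. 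Without this (or an equivalent verification), your proof of the inversion case is incomplete.
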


Notice that this implies that $\square^{m/r-\lambda}$ intertwines $\widetilde U_{2 m/r-\lambda}$ and $\widetilde U_{\lambda}$   as (ordinary) representations of $\widetilde G$ into $\Hol(D)$ (cf.~\cite[Theorem 3.2]{Bargmann}). 
As observed in~\cite{Garrigos} for the case $\lambda=0$, deriving this result by means of~\cite[Theorem 6.4]{Arazy} (which is its analogue for a circular bounded realization of $D$) is not straightforward. Following~\cite{Garrigos}, we shall therefore provide a direct proof.

\begin{proof}
	Observe first that the assertion follows from Lemma~\ref{lem:7} when $\phi\in \Aff$, and that $G$ is generated by $\Aff$ and the inversion $\iota\colon z \mapsto -z^{-1}$ (cf.~Proposition~\ref{prop:15}). Since  $U_\lambda$ and $U_{2m/r-\lambda}$ are ray representations of $G$, it will then suffice to prove our assertion for $\phi=\iota$.
	Observe first that, by Proposition~\ref{prop:15}, $J\iota =\Delta^{-(2 m/r) \vect 1_r} = (2i)^{-2m} B_0^{-(2m/r)\vect 1_r}$,\footnote{Slightly abusing the notation, we write $B_0^{\vect s}(z)$ instead of $\Delta^{\vect s}(z/(2i))$ for every $z\in D$ and for every $\vect s\in \C^r$.} so that  $(J \iota)^\xi\coloneqq 2^{-2 m \xi} B_0^{-(2 \xi m/r) \vect 1_r}$ (up to a unimodular constant) on $D$, for every $\xi\in \R$. In particular, it will suffice to prove that
	\[
	\square^{m/r-\lambda} [(f\circ \iota)B_0^{- \lambda  \vect 1_r} ]=4^{r\lambda- m} B_0^{- (2 m/r-\lambda) \vect 1_r}  (\square^{m/r-\lambda} f)\circ \iota
	\]
	for every $f\in \Hol(D)$ (equality in $\Hol(D)$). By the proof of~\cite[Lemma 3.8]{Garrigos}, we see that
	\[
	\square^{m/r-\lambda} [(q\circ \iota)B_0^{- \lambda  \vect 1_r} ]=(-2 i)^{r\lambda-m} 	\Big( \vect s+\Big(\frac m r -1\Big)\vect 1_r -\frac 1 2 \vect m\Big)_{(m/r-\lambda)\vect 1_r} (q\circ \iota) B_0^{-(m/r)\vect 1_r}
	\]
	on $D$ for every $\vect s\in \N_\Omega$ and for every $q\in \Pc_{\vect s}$, where $\Pc_{\vect s}$ is the $G_0(\Omega)$-invariant vector space generated by $\Delta^{\vect s}$ (cf.~Subsection~\ref{sec:3:2}). Now, by~\cite[Lemma XIV.2.1]{FarautKoranyi},
	\[
	\Delta^{(m/r-\lambda)\vect 1_r} \square^{m/r-\lambda} q=	\Big( \vect s+\Big(\frac m r -1\Big)\vect 1_r -\frac 1 2 \vect m\Big)_{(m/r-\lambda)\vect 1_r} q
	\]
	for every $\vect s\in \N_\Omega$ and for every $q\in \Pc_{\vect s}$. In addition, Proposition~\ref{prop:3b} shows that $\Delta^{(m/r-\lambda)\vect 1_r}\circ\iota=(2 i)^{r\lambda-m} B^{(\lambda-m/r)\vect 1_r}_0$.
	Since $\bigoplus_{\vect s\in \N_\Omega} \Pc_{\vect s}$ is the space of holomorphic poynomials  on $F_\C$ by Proposition~\ref{prop:13}, this proves that
	\[
	\square^{m/r-\lambda} [(f\circ \iota)B_0^{- \lambda  \vect 1_r} ]= 4^{r\lambda-m}B_0^{- (2 m/r-\lambda) \vect 1_r}  (\square^{m/r-\lambda} f)\circ \iota
	\]
	for every holomorphic polynomial $f$, hence for every $f\in \Hol(D)$, since the space of holomorphic polynomials is dense in $\Hol(D)$ by~\cite[Corollary 7.2]{Rango1}.
\end{proof}

\begin{prop}\label{prop:12}
	Take $\lambda\in \R$ and a closed vector subspace $V$ of $\Hol(D)$.  Let $K_\lambda$ be the set of $k\in\Set{1,\dots,r}$ such that  $ \frac 1 2 m_{k}-\lambda=\frac{a(k-1) }{2}-\lambda\in\N$. 
	For every $k\in K_\lambda$, define 
	\[
	N_{\lambda,k}\coloneqq\Set{\vect s\in \N^*_\Omega\colon	s_{ r-k+1}=\cdots=s_{ r}=\frac 1 2 m_k-\lambda+1 }.
	\]
	Then, $V$ is $G_0$-$U_\lambda$-invariant if and only if it is either $\Set{0}$, $\Hol(D)$, or $V_{\lambda,k}\coloneqq \bigcap_{\vect s\in N_{\lambda,k}}\ker \Dc_{\vect s}$ for some $k\in K_\lambda$.
	If this is the case, then $V$ is also $G$-$U_\lambda$-invariant.
	
	Finally, if $k\in K_\lambda$ and $\vect s\in N_{\lambda,k}$, then $V_{\lambda,k} $ is the largest $G_0$-$U_\lambda$-invariant closed vector subspace of $\ker \Dc_{\vect s}$, and
	\[
	V_{\lambda,k}=\Set{f\in \Hol(D)\colon \forall \phi\in G_0\:\: (U_\lambda(\phi) f)*I^{-\vect s}=0  }.
	\]
\end{prop}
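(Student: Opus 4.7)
The plan is to reduce the classification to the $\Aff_0$-invariant case handled by Corollary~\ref{cor:6}, and then pin down the extra constraints coming from $\iota$-invariance, where $\iota$ is the involution of Proposition~\ref{prop:15}. Since $\Aff_0\subseteq G_0$, any closed $G_0$-$U_\lambda$-invariant subspace $V$ is $\Aff_0$-invariant, so Corollary~\ref{cor:6} gives $V=\bigcap_{\vect s\in N'}\ker\Dc_{\vect s}$, where $N'=\Set{\vect s\in\N^*_\Omega : V\subseteq \ker\Dc_{\vect s}}$ is upward closed in $\N_\Omega^*$ by Proposition~\ref{prop:10}. The cases $\vect 0\in N'$ (equivalent to $V=\Set{0}$) and $N'=\emptyset$ (equivalent to $V=\Hol(D)$) are trivial, so the task reduces to showing that for $\Set{0}\subsetneq V\subsetneq \Hol(D)$, the set $N'$ coincides with the $\N_\Omega^*$-upward closure of $N_{\lambda,k}$ for some $k\in K_\lambda$. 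Since $G=G_0\Aff$, once $G_0$-invariance is known the asserted $G$-invariance follows from the $\Aff$-invariance given by Corollary~\ref{cor:6}.

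For the sufficient direction, $\Aff$-invariance of $V_{\lambda,k}$ is immediate from Corollary~\ref{cor:6}. The clean case is $k=r$: here $N_{\lambda,r}$ consists only of $(m/r-\lambda)\vect 1_r$ (the monotonicity $s_1\meg\cdots\meg s_r$ together with the prescribed value on the last coordinate forces this), and since $\Delta^{(m/r-\lambda)\vect 1_r}_*$ is a $G_0(\Omega)$-character hence $K_0$-invariant, $\Dc_{(m/r-\lambda)\vect 1_r}=\C\square^{m/r-\lambda}$, so that $V_{\lambda,r}=\ker\square^{m/r-\lambda}$. Proposition~\ref{prop:4} then yields $G$-$U_\lambda$-invariance since $\square^{m/r-\lambda}$ intertwines $U_\lambda$ with $U_{2m/r-\lambda}$. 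For $k<r$, the $G_0$-invariance of $V_{\lambda,k}$ is most efficiently established at the end by identifying $V_{\lambda,k}$ with the manifestly $G_0$-invariant right-hand side of the final displayed formula.

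The heart of the argument is the necessary direction: given $\vect s\in N'$, use $\iota$-invariance to constrain $\vect s$. Unpacking $U_\lambda(\iota)f(z)=f(-z^{-1})\Delta^{-\lambda\vect 1_r}(z)$ from Proposition~\ref{prop:15}(3), the condition $U_\lambda(\iota)V\subseteq V\subseteq\ker\Dc_{\vect s}$ yields, when tested on the generators $\Delta^{\vect t}$ of $\Pc_{\vect t}$ (which are dense in $\Hol(D)$ by Proposition~\ref{prop:13} and~\cite[Corollary~7.2]{Rango1}), an identity analogous to the one in the proof of Proposition~\ref{prop:4}. One computes $[(\Delta^{\vect t}\circ\iota)\Delta^{-\lambda\vect 1_r}]*k_*I^{-\vect s}$ for $k\in K_0$ using Proposition~\ref{prop:3b} to rewrite $\Delta^{\vect t}(-z^{-1})$ as a constant multiple of $\Delta_*^{-\vect t}$, followed by the Pochhammer-type identity of Lemma~\ref{lem:8} for convolution with $I^{-\vect s}$. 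The resulting product of Pochhammer factors must vanish for all $\vect t\in\N_\Omega$, and the vanishing analysis performed in the proof of Proposition~\ref{prop:7} forces the bottom $k$ entries of $\vect s$ to equal $\tfrac{1}{2}m_k-\lambda+1$ for some $k\in K_\lambda$, that is, $\vect s$ lies in $N_{\lambda,k}$ up to upward closure. A separate minimality step, modeled on Step~II of the proof of Theorem~\ref{prop:11}, rules out the coexistence in $N'$ of elements from $N_{\lambda,k}$ and $N_{\lambda,k'}$ with $k\neq k'$: combining Proposition~\ref{prop:10} with a comparison of the distinguished bottom coordinates produces an $\vect s''\in N'$ forcing $V$ to collapse, contradicting non-triviality.

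Finally, the formula $V_{\lambda,k}=\Set{f\in\Hol(D) : (U_\lambda(\phi)f)*I^{-\vect s}=0\,\forall \phi\in G_0}$ for $\vect s\in N_{\lambda,k}$ and the maximality of $V_{\lambda,k}$ among $G_0$-$U_\lambda$-invariant subspaces of $\ker\Dc_{\vect s}$ follow at once: the right-hand side $W$ is closed and $G_0$-$U_\lambda$-invariant by construction, and the relation $K_0\subseteq G_0\cap\Aff$ gives $W\subseteq\ker\Dc_{\vect s}$; by the classification, $W\in\Set{\Set{0},\Hol(D),V_{\lambda,k'}}$, and since $W\subseteq\ker\Dc_{\vect s}\neq\Hol(D)$ and contains the nonzero $V_{\lambda,k}$ (itself $G_0$-invariant because of this very identification), the only option is $k'=k$ and $W=V_{\lambda,k}$. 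The main obstacle is the extended Pochhammer computation in the necessary direction: Proposition~\ref{prop:4} exploits the hypothesis $\lambda\in m/r-1-\N$ to give a \emph{single} intertwiner $\square^{m/r-\lambda}$, whereas here $\lambda$ is arbitrary and one must track the action of the entire family $\Set{I^{-\vect s}* : \vect s\in\N^*_\Omega}$ on the mixed expression $(\Delta^{\vect t}\circ\iota)\Delta^{-\lambda\vect 1_r}$ and correctly read off the joint vanishing conditions characterizing $N_{\lambda,k}$.
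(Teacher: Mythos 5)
Your overall architecture (reduce to the $\Aff_0$-invariant classification of Corollary~\ref{cor:6}, then extract constraints from $U_\lambda(\iota)$ via Proposition~\ref{prop:3b} and the Pochhammer identity of Lemma~\ref{lem:8}) matches Step~I of the paper's proof, and your treatment of the case $k=r$ via Proposition~\ref{prop:4} is correct. But there is a genuine gap for $k<r$: you never actually prove that $V_{\lambda,k}$ \emph{is} $G_0$-$U_\lambda$-invariant, and your proposed route is circular. The classification argument only shows that a nontrivial proper closed $G_0$-$U_\lambda$-invariant subspace, \emph{if it exists}, must equal some $V_{\lambda,k}$; it does not show that any of the candidates $V_{\lambda,k}$ is in fact invariant. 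When you then define $W=\Set{f\colon (U_\lambda(\phi)f)*I^{-\vect s}=0\ \forall\phi\in G_0}$ and argue that $W$ "contains the nonzero $V_{\lambda,k}$ (itself $G_0$-invariant because of this very identification)", you are assuming exactly the inclusion $V_{\lambda,k}\subseteq W$ that encodes the invariance to be proved; without it you cannot even rule out $W=\Set{0}$. The paper closes this gap with an external input you never invoke: by \cite[Theorem 5.3]{FarautKoranyi2} there exist at least $\card(K_\lambda)$ distinct nontrivial proper closed $G_0$-$U_\lambda$-invariant subspaces, which by the classification must exhaust the list $\Set{V_{\lambda,k}}_{k\in K_\lambda}$; only then do maximality and the displayed formula follow. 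A direct verification that $U_\lambda(\iota)$ preserves $V_{\lambda,k}$ is not a routine consequence of the Pochhammer computation (the vanishing of \emph{some} factor for every generator $\Delta^{\vect t}$ with $\vect t$ in the relevant index set and every $\vect s'$ in the upward closure of $N_{\lambda,k}$ is not immediate), so some substitute for this counting argument is indispensable.

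Two secondary points. First, the combinatorial heart of the necessity direction is more delicate than "the vanishing analysis of Proposition~\ref{prop:7}": one must show that the set $N$ with $V\cap\Pc=\bigoplus_{\vect t\in N}\Pc_{\vect t}$ equals exactly one of the sets $\Set{\vect t\in\N_\Omega\colon t_{k_j}\meg \frac12 m_{k_j}-\lambda}$, which the paper does by constructing explicit test exponents $\vect{\bar s'}$ and running a minimality argument on $j$; also, the Pochhammer product must vanish for all $\vect t\in N$ (not "for all $\vect t\in\N_\Omega$", as you write). Second, your proposed "minimality step modeled on Step~II of the proof of Theorem~\ref{prop:11}" does not transfer as stated: that argument uses the Hilbert-space structure (surjectivity of $H\ni f\mapsto f*I^{-\vect s}$ onto $\Ac_{\lambda\vect 1_r+2\vect s}$), which is unavailable for a bare closed subspace $V$ of $\Hol(D)$; the exclusion of mixed $k$'s must be done combinatorially, as in the paper.
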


Observe that this essentially provides (cf.~Subsection~\ref{sec:bounded}) a particular case of~\cite[Theorem 4.8, (ii)]{Arazy}.
Observe, in addition, that the sets $N_{\lambda,k}$ are finite, since the elements of $\N^*_\Omega$ are increasing.

\begin{proof}
	\textsc{Step I.} Set $a_{k}\coloneqq -\lambda+\frac 1 2 m_k$ for every $k=1,\dots, r$, so that $K_\lambda= \Set{k\in \Set{1,\dots, r}\colon a_{k}\in \N}$. Set $q(\lambda)\coloneqq \card(K_\lambda)$ and let $k_1,\dots,k_{q(\lambda)}$ be the elements of $K_\lambda$, ordered increasingly.
	
	Assume that $V$ is $G_0$-$U_\lambda$-invariant and  that $V\neq \Set{0}, \Hol(D)$. Since, in particular, $V$ is $\Aff_0$-invariant, Corollary~\ref{cor:6} implies that   there is a subset $N$ of $\N_{\Omega}$ such that $V$ is the closure of $\bigoplus_{\vect s\in N}  \Pc_{\vect s}$, so  that $V=\bigcap_{\vect s\in  \N_{\Omega}\setminus N} \ker \Dc_{\sigma(\vect s)}$, and  $V\subseteq \ker \Dc_{\sigma(\vect s)}$ if and only if $\vect s\in \N_{\Omega}\setminus N$,  where $\sigma(s_1,\dots,s_r)=(s_r,\dots,s_1)$.
	In particular, $N\neq \emptyset, \N_\Omega$. 
	Now, define $\iota\colon z \mapsto -z^{-1}$, so that  $U_\lambda(\iota) f= (f\circ \iota) \Delta^{-\lambda \vect 1_r} =2^{-r\lambda}(f\circ \iota) B_0^{-\lambda \vect 1_r}$ for every $f\in \Hol(D)$, with equality in $\Hol(D)/\T$ (cf.~the proof of Proposition~\ref{prop:4}).  
	
	Take $\vect s\in N$ and observe that  there is $k$ in the stabilizer of  $e_\Omega$ in $G_0(\Omega)$ (canonically identified with the  stabilizer of $i e_\Omega$ in $GL_0(D)$) such that (cf.~Proposition~\ref{prop:3b} and Lemma~\ref{lem:6})
	\[
	U_\lambda(\iota) B^{\vect s}_{0}= 2^{-r \lambda}(B^{-\sigma(\vect s)}_0 \circ k) B_0^{-\lambda \vect 1_r}=  2^{-r \lambda} B^{-\sigma(\vect s)-\lambda\vect 1_r}_0 \circ k.
	\] 
	Now, take $\vect s'\in \N_\Omega \setminus N$. Since $(U_\lambda(\iota) B^{\vect s}_{0})* k^{-1}_* I^{-\sigma(\vect s')} =0$,  Lemma~\ref{lem:8} shows that
	\[
	0=B^{-\sigma(\vect s)-\lambda\vect 1_r}_0*I^{-\sigma(\vect s')}=(2i)^{-(s'_1+\cdots+s'_r)} \left(-\sigma(\vect s)-\lambda\vect 1_r+\frac 1 2 \vect m'\right)_{\sigma(\vect s')} B^{-\sigma(\vect s)-\lambda\vect 1_r-\sigma(\vect s')}_0,
	\]
	so that 
	\[
	\prod_{k=1}^r \left(-s_{r-k+1}-\lambda+ \frac 1 2 m'_{k}\right)\cdots \left(-s_{r-k+1}-\lambda - s'_{r-k+1}+ \frac 1 2 m'_{k}+1\right)= \left(-\sigma(\vect s)-\lambda\vect 1_r+\frac 1 2 \vect m'\right)_{\sigma(\vect s')}=0 .
	\]
	In other words, noting that $\sigma(\vect m')=\vect m$, there is $k\in K_\lambda$ such that
	\[
	a_k\Meg s_k>a_k-s'_k.
	\]
	Observe that $a_k, -s_k$, and $ a_k-s'_k$ are increasing functions of $k$.
	
	Define, for every $j=1,\dots, q(\lambda)$, $N_j\coloneqq \Set{ \vect s''\in \N_{\Omega}\colon s''_{k_j}\meg a_{k_j}}$, so that $N_1\subseteq\cdots \subseteq N_{q(\lambda)}$. Observe that, if $\vect s\in N$, then $s_{k_j}\meg a_{k_j}$ for some $j\in \Set{1,\dots, q(\lambda)}$ by the previous remarks (since $N\neq \N_\Omega$), so that $\vect s\in N_j\subseteq N_{q(\lambda)}$. Thus, $N\subseteq N_{q(\lambda)}$.
	
	Now, let $\bar \jmath$ be the smallest $j\in\Set{1,\dots,q(\lambda)}$ such that $N\subseteq N_j$, and let us prove that $N=N_{\bar \jmath}$. Indeed, assume on the contrary that there is $\vect s'\in N_{\bar \jmath}\setminus N$, so that $s'_{k_{\bar\jmath}}\meg a_{k_{\bar \jmath}}$. 
	Take $\vect {\bar s'} \in \N_\Omega$ so that $\bar s'_1=\cdots= \bar s'_{k_{\bar \jmath-1}}=a_{k_{\bar\jmath-1}}+1$ while $\bar s'_{k_{\bar\jmath}}=\cdots=\bar s'_{r}=0$ (we do not impose any conditions on the possibly remaining $\bar s'_k$). Then, for every $j=1,\dots, \bar\jmath-1$,
	\[
	\bar s'_{k_j}=a_{k_{\bar\jmath-1}}+1>a_{k_j},
	\]
	whereas, for $j=\bar\jmath ,\dots, q(\lambda)$,
	\[
	\bar s'_{k_j}= 0\meg a_{k_{\bar\jmath}} -s'_{k_{\bar\jmath}} \meg a_{k_j}-s'_{k_j},
	\]
	so that $\vect {\bar s'}\not \in N$ by the previous remarks. Hence, for every $\vect s\in N$ there is $j\in \Set{1,\dots,q(\lambda)}$ such that
	\[
	a_{k_j}\Meg s_j> a_{k_j}-\bar s'_j,
	\]
	so that necessarily $\bar s'_j>0$, whence $\bar \jmath\Meg 2$ (since $N\neq \emptyset$) and $j\meg \bar \jmath-1$. We have thus proved that $N\subseteq N_{\bar\jmath-1}$, contrary to the definition of $\bar\jmath$. It then follows that $N=N_{\bar\jmath}$.
	
	Let us then show that $V=\bigcap_{\vect s\in N_{\lambda,k_{\bar \jmath}}}\ker \Dc_{\vect s}$. 
	To this aim, observe that $V= \bigcap_{\vect s\in N'_{\bar \jmath}} \ker \Dc_{\vect s}$, where $N'_{\bar \jmath}\coloneqq \N_{\Omega}^*\setminus \sigma(N_{\bar \jmath})=\Set{\vect s\in \N_{\Omega}^*\colon s_{r-k_{\bar \jmath}+1}\Meg a_{k_{\bar \jmath}}+1}$. 
	Observe that, if $\vect s\in N'_{\bar \jmath}$, then $\vect s=\vect s'+\vect s''$, where $s'_j=\min(s_j, a_{k_{\bar \jmath}}+1)$ and $s''_j=(s_j- a_{k_{\bar \jmath}}-1)_+$ for every $j=1,\dots,r$. Then, $\vect s'\in N_{\lambda,k_{\bar \jmath}}$ and $\vect s''\in \N_\Omega^*$, so that Proposition~\ref{prop:10} implies that  $\ker \Dc_{\vect s'} \subseteq \ker \Dc_{\vect s}$. Since clearly $N_{\lambda,k_{\bar \jmath}}\subseteq N'_{\bar \jmath} $, this proves that $V=\bigcap_{\vect s\in N_{\lambda,k_{\bar \jmath}}}\ker \Dc_{\vect s}$. Since $G=\Aff G_0$ by~\cite[Remark 1]{Nakajima}, this implies that $V$ is actually $G$-$U_\lambda$-invariant.
	
	\textsc{Step II.} Now, observe that~\cite[Theorem 5.3]{FarautKoranyi2} shows that there  are at least $q(\lambda)$  closed $G_0$-$U_\lambda$-invariant subspaces of $\Hol(D)$ which  are different from $\Set{0}$ and $\Hol(D)$.  By~\textsc{Step I}, these spaces must be the $V_{\lambda,k}$, $k\in K_\lambda$. In particular, the $V_{\lambda,k}$, $k\in K_\lambda$, are $G_0$-$U_\lambda$-invariant.
	
	\textsc{Step III.} Now, take $k\in K_\lambda$ and $\vect s\in N_{\lambda,k}$. Observe that~\textsc{step I} shows that, if $V$ is a $G_0$-$U_\lambda$-invariant closed vector subspace of $\ker \Dc_{\vect s}$, then it is of the form $V_{\lambda,k'}$ for some $k'\in K_\lambda$. In particular, $\vect s\in N'_{k'}$, that is, $s_{r-k'+1}\Meg a_{k'}+1$. Thus, $a_k+1\Meg a_{k'}+1$, so that $k\Meg k'$ and  $V=V_{\lambda,k'}\subseteq V_{\lambda,k}$. Thus, $V_{\lambda,k}$ is the largest $G_0$-$U_\lambda$-invariant closed vector subspace of $\ker \Dc_{\vect s}$. Since the same holds replacing $\ker \Dc_{\vect s}$ with $\ker (\,\cdot\,*I^{-\vect s})$ (as $G_0$ contains $GL_0(D)\cong G_0(\Omega)$), this proves that
	\[
	V_{\lambda,k}=\Set{f\in \Hol(D)\colon \forall \phi\in G_0\:\: (U_\lambda(\phi) f)*I^{-\vect s}=0  },
	\]
	whence the conclusion.
\end{proof}

\begin{proof}[Proof of Theorem~\ref{teo:2}.]
	The first assertion follows immediately from Proposition~\ref{prop:4} and the $G$-$U_{2 m/r-\lambda}$-invariance of $\Ac_{2m /r-\lambda}$ (cf.~Proposition~\ref{prop:20}).

	Then, consider the second assertion.
	Denote by $V$ the closure of $\Set{0}$ in $H$, so that $V$ is a \emph{proper} closed $G_0$-$U_\lambda$-invariant vector subspace of  $\Hol(D)$ and the linear mapping $H\to \Hol(D)/V$ is continuous. By Proposition~\ref{prop:12}, we see that either $V=\Set{0}$, in which case Proposition~\ref{prop:4b} leads to the conclusion, or there is $k\in \Set{1,\dots,r}$ such that $\frac1 2 m_{k}-\lambda\in \N$ and $V=\bigcap_{\vect s\in N_{\lambda,k}}\ker \Dc_{\vect s}$, with the notation of Proposition~\ref{prop:12}. 
	Let us show that $k=r$. Assume by contradiction that $k<r$, and observe that, by Theorem~\ref{prop:11}, there is $\ell\in \Set{0,\dots,r}$ such that $H$ is a dense vector subspace of $\Ac_{\lambda,\lambda'}+V$ (resp.\ with proportional seminorms) for some $\lambda'\in \N$ such that $(\lambda+2 \lambda')\vect 1_r\succ_\epsb \frac 1 2 \vect m'^{(\epsb)}$, where $\epsb\in \Set{0,1}^r$ is defined by $\eps_1=\dots=\eps_{r-\ell}=0$ and $\eps_{r-\ell+1}=\cdots=\eps_{r}=1$. 
	Let $\vect s$ be the minimum of $N_{\lambda,k}$, so that  $s_1=\cdots=s_{r-k}=0$. 
	Then,~\textsc{step II} in the proof of Theorem~\ref{prop:11} shows that either  $H\subseteq \ker \Dc_{\vect s}$, or $\lambda\vect 1_r + 2 \vect s\succ_\epsb \frac 1 2 \vect m'^{(\epsb)}$. 
	The first case cannot occur, since it would imply that $H\subseteq V$ by the $G_0$-$U_\lambda$-invariance of $H$ and Proposition~\ref{prop:12}, and this would contradict the assmption that $H$ be non-trivial.
	Then, $\lambda\vect 1_r + 2 \vect s\succ_\epsb \frac 1 2 \vect m'^{(\epsb)}$. Since $s_1=0$ and $\vect m'^{(\epsb)}$ is decresing, this implies that $\lambda \vect 1_r\succ_{\epsb} \frac 1 2 \vect m'^{(\epsb)}$. If $\ell=r$, this implies that $\lambda > \frac 1 2 m'_1$, so that $\frac{1}{2} m_k-\lambda\meg \frac 1 2 m_r -\lambda=\frac 1 2 m'_1-\lambda<0$, which is absurd.
	Then, $\ell<r$ and   $\lambda=\frac1 2 m'^{(\epsb)}_{r-\ell}=\frac 1 2 m'_{r-\ell}$.
	Since $\lambda \meg \frac 1 2 m_k= \frac 1 2 m'_{r-k+1}$, we must have $\ell\meg k-1$. Since, in addition, $\lambda \vect 1_r+2\vect s\succ_{\epsb} \frac 1 2 \vect m'^{\epsb}$, we have $s_{r-\ell}=0$, so that  $k\meg \ell $,  which contradicts the preceding condition. 
	
	Therefore, $k=r$, in which case $\frac 1 2 m_{r}=m /r-1$ and the assertion follows by means of Proposition~\ref{cor:1}.
\end{proof}

\subsection{The Circular Bounded Realization of $D$}\label{sec:bounded}

In this subsection, we collect some remarks on the circular bounded realization of $D$ which will be of use when describing the case $n>0$. Cf., e.g.,~\cite{FarautKoranyi2,ArazyFisher2,Arazy} for more information.

Observe that, by~\cite[Chapters 2, 10]{Loos}, there are a circular convex bounded symmetric domain $\Dc$ in $E\times F_\C$ and a birational biholomorphism $\Cc\colon D\to \Dc$ (the (inverse) `Cayley transform') such that the following hold:
\begin{itemize}
	\item there are two rational mappings $\Cc_{F}\colon F_\C\to F_\C$ and  $\Cc_E\colon F_\C\to \Lin(E)$ such that 
	\[
	\Cc(\zeta,z)=(\Cc_E(z)\zeta,\Cc_F(z))
	\]
	for every $(\zeta,z)\in D$;

	\item $\Cc_F(z)=(z+i e_\Omega)^{-1}(z-i e_\Omega)$ for every $z\in T_\Omega$ and $\Cc_F$ induces a birational biholomorphism of $T_\Omega$ onto $\Dc_0\coloneqq\Set{z\in F_\C\colon (0,z)\in \Dc}$.
\end{itemize}

In addition, $\Cc G(D) \Cc^{-1}$ is the group of biholomorphisms $G(\Dc)$ of $\Dc$, so that the isomorphism $G_0(D)\ni \phi\mapsto \Cc \phi\Cc^{-1}\in G_0(\Dc)$ lifts to an isomorphism of $\widetilde G(D)$ onto $\widetilde G(\Dc)$.

For every $\lambda\in \R$, we may then define a representation $\widetilde U_\lambda$ of $\widetilde G(\Dc)$ in $\Hol(\Dc)$ so that
\[
\widetilde U_\lambda(\phi) f=(f\circ \phi^{-1}) (J\phi^{-1})^{\lambda/g}
\]
for every $f\in \Hol(\Dc)$ and for every $\phi\in \widetilde G(\Dc)$, with the same conventions as before. We define a ray representation $U_\lambda$ of $G(\Dc)$ in $\Hol(\Dc)$ analogously. Since $\widetilde U_\lambda$ and $U_\lambda$ are defined in similar ways on $\widetilde G(\Dc)$ and $G(\Dc)$ and on $\widetilde G(D)$ and $G(D)$, we hope that this abuse of notation will not lead to any issues.
If we define an isomorphism $\Cc_\lambda\colon \Hol(D)\to \Hol(\Dc)$ so that
\[
\Cc_\lambda f= (f\circ \Cc^{-1}) (J\Cc^{-1})^{\lambda/g}
\]
for every $f\in \Hol(D)$, then $\Cc_\lambda$ intertwines the two $\widetilde U_\lambda$ (and the two $U_\lambda$), possibly up to a unitary character of $\widetilde G$ (depending on the definition of $(J\Cc^{-1})^{\lambda/g}$).

Now, observe that the stabilizer $\Kc_0$ (resp.\ $\Kc$) of $0$ in $G_0(\Dc)$ (resp.\ $G(\Dc)$) is the group of linear transformations in $G_0(\Dc)$ (resp.\ $G(\Dc)$), cf., e.g.,~\cite[1.5]{Loos}, and is a maximal compact subgroup of $G_0(\Dc)$  (resp.\ $G(\Dc)$). In addition, we have the following result (cf.~\cite[Theorem 2.1]{FarautKoranyi2}).

\begin{prop}\label{prop:30}
	The space of finite $\Kc_0$-vectors\footnote{In other words, the spaces of $f\in \Hol(\Dc)$ whose $\Kc_0$-orbit is finite-dimensional.} (under composition) in $\Hol(\Dc)$ is the space $\Qc$ of holomorphic polynomials on $\Dc$. In addition, for every $\vect s\in \N_\Omega$, the $\Kc_0$-invariant space $\Qc_{\vect s}$ generated by $\Delta^{\vect s}$ is irreducible and $\Kc$-invariant, and  $\Qc=\bigoplus_{\vect s\in \N_\Omega} \Qc_{\vect s}$. 
\end{prop}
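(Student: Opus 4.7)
The plan is to realize the statement as an instance of the Hua--Schmid--Kostant--Faraut--Koranyi decomposition of polynomials on a bounded symmetric domain under the linear isotropy subgroup.

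First, I would identify the space of finite $\Kc_0$-vectors with $\Qc$. Since $\Dc$ is circular, the rotations $z \mapsto \ee^{i\theta} z$ belong to $\Kc_0$ (they fix $0$ and preserve $\Dc$, hence lie in the linear isotropy). For any $f\in \Hol(\Dc)$, the action of this circle produces the sequence of homogeneous components of the Taylor expansion of $f$ at $0$. If the $\Kc_0$-orbit of $f$ spans a finite-dimensional subspace, then in particular the circle orbit is finite-dimensional, which forces only finitely many Taylor components to be nonzero, i.e.\ $f\in \Qc$. Conversely, because $\Kc_0\subseteq GL(E\times F_\C)$ acts linearly and thus preserves each homogeneous degree, every polynomial has a finite-dimensional $\Kc_0$-orbit.

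Next, I would show that each $\Qc_{\vect s}$ is irreducible and $\Kc$-invariant. The key remark is that $\Delta^{\vect s}$ is a highest weight vector: extending the triangular group $T_-$ to act linearly on $E\times F_\C$ (via the data recalled in Subsection~\ref{sec:Sym}), one sees that $T_-$ acts on $\Delta^{\vect s}$ by the character $\Delta^{\vect s}(t)$, and this is a highest weight for a Borel subalgebra of the complexification of $\mathrm{Lie}(\Kc_0)$. The irreducibility of $\Qc_{\vect s}$ then follows from the theorem of the highest weight. For $\Kc$-invariance, I would argue as in Proposition~\ref{prop:13}: the $\Kc$-invariant span $\Qc'_{\vect s}$ of $\Qc_{\vect s}$ decomposes as a sum of $\Qc_{\vect{s'}}$'s, and using the same explicit element $k$ of $G(\Omega)\setminus G_0(\Omega)$ (extended appropriately) satisfying $\Delta^{\vect s}\circ k=\Delta^{\vect s}$, one checks that $\Qc'_{\vect s}$ cannot contain any other $\Delta^{\vect{s'}}$, forcing $\Qc'_{\vect s}=\Qc_{\vect s}$.

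The hardest step is the direct sum decomposition $\Qc=\bigoplus_{\vect s\in \N_\Omega}\Qc_{\vect s}$, and this is the main obstacle. In the tube case ($n=0$) it would reduce to Proposition~\ref{prop:13} after passing through the Cayley transform, but for general (possibly non-tube) symmetric $\Dc$ the decomposition rests on the Jordan triple system structure of $E\times F$, not merely on the $G(\Omega)$-module structure used in Proposition~\ref{prop:13}. Rather than reprove it from scratch, I would cite \cite[Theorem 2.1]{FarautKoranyi2}, which gives exactly this decomposition in the general framework of bounded symmetric domains. Alternatively, one can endow $\Qc$ with the Fischer inner product, observe that the $\Qc_{\vect s}$ have pairwise distinct highest weights and are therefore mutually orthogonal, and then verify that their direct sum exhausts $\Qc$ either by a dimension count via the Hilbert series of $\Qc$ or by showing that the orthogonal complement would be a proper $\Kc_0$-invariant subspace containing no highest weight vector, contradicting complete reducibility of the (locally finite) $\Kc_0$-action on $\Qc$.
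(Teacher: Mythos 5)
Your treatment of the first and third claims matches the paper's: the paper simply invokes \cite[Theorem 2.1]{FarautKoranyi2} for the identification of the finite $\Kc_0$-vectors with $\Qc$, the irreducibility of the $\Qc_{\vect s}$, and the decomposition $\Qc=\bigoplus_{\vect s}\Qc_{\vect s}$, exactly as you propose (your circle-action argument and highest-weight sketch are fine but ultimately subsumed by that citation). The one assertion the paper actually proves in detail is the $\Kc$-invariance of the $\Qc_{\vect s}$, and that is precisely where your proposal has a gap.

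Your plan for $\Kc$-invariance is to take the element $k\in G(\Omega)\setminus G_0(\Omega)$ from Proposition~\ref{prop:13}, ``extended appropriately,'' and rerun the argument. But $\Kc$ is the stabilizer of $0$ in $G(\Dc)$, acting on polynomials on $E\times F_\C$, not on $F_\C$ alone, and it is not clear a priori either that $k$ extends to an automorphism of $\Dc$ or that $\Kc_0$ together with such an extension exhausts $\Kc$ --- i.e.\ that $\pi_0(\Kc)$ is controlled by $G(\Omega)/G_0(\Omega)$ at all. This is exactly the content one must supply, and the paper does it structurally: from $G(D)=G_0(D)\Aff(D)$ and $\Aff(D)=K_\Aff G_T$ one gets $G(D)=K_\Aff G_0(D)$, so it suffices to show that $\Cc K_\Aff\Cc^{-1}$ preserves each $\Qc_{\vect s}$. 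For $A\times B_\C\in K_\Aff$ one computes $\Cc(A\times B_\C)\Cc^{-1}(\zeta,z)=(A'(\zeta,z),B'(z))$ and, crucially, identifies $B'=B_\C$ because $B$ lies in the stabilizer $K$ of $e_\Omega$ in $G(\Omega)$, hence is a Jordan algebra automorphism and commutes with $\Cc_F$ and $\Cc_F^{-1}$. Only at that point does Proposition~\ref{prop:13} apply, giving $\Delta^{\vect s}\circ B=\sum_j a_j(\Delta^{\vect s}\circ B_j)$ with $B_j\in K_0$; one then lifts the $B_j$ to elements of $K_\Aff\cap\Aff_0(D)$ via Proposition~\ref{prop:15} and concludes by holomorphy. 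Your ``extended appropriately'' conceals all of this, and in the non-tube case ($n>0$) the reduction of the $\Kc$-action on $E\times F_\C$ to a $G(\Omega)$-action on $F_\C$ is the whole point; without it the argument does not close.
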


\begin{proof}
	All assertions follow from~\cite[Theorem 2.1]{FarautKoranyi2}, except for the $\Kc$-invariance of the $\Qc_{\vect s}$. To see this latter fact, observe first that $G(D)= G_0(D)\Aff(D)$ by~\cite[Remark 1]{Nakajima}, and that $\Aff(D)=K_\Aff G_T$, where $K_\Aff$ is the stabilizer of $(0,i e_\Omega)$ in $GL(D)$. Then, $G(D)=K_\Aff G_0(D)$. It will then suffice to prove that  $\Cc K_\Aff\Cc^{-1}$ preserves the $\Qc_{\vect s}$. Then, take $A\times B_\C\in K_\Aff$, so that $B$ is in the stabilizer of $0$ in $G(\Omega)$, $A\in GL(E)$, and $B_\C\Phi=\Phi(A\times A)$. Then,
	\[
	(\Cc(A\times B_\C )\Cc^{-1})(\zeta,z)=(\Cc_E( B_\C \Cc_F^{-1}(z)) A \Cc_E(\Cc_F^{-1}(z))^{-1}\zeta,   \Cc_F B_\C \Cc_F^{-1}(z))=(A'(\zeta,z),B'(z))
	\]
	for every $(\zeta,z)\in \Dc$, where $A'\in \Lin(E\times F_\C;E)$ and $B'$ is a linear automorphism of $\Dc_0$ (the fact that $A'$ and $B$ must be linear follows from the fact that $\Cc K_\Aff\Cc^{-1}\subseteq \Kc$). Therefore,
	\[
	\Delta^{\vect s}((\Cc(A\times B_\C )\Cc^{-1})(\zeta,z))=\Delta^{\vect s}(B'(z)),
	\]
	for every $(\zeta,z)\in \Dc$.  Now, observe that $\Cc_F z= (z+i e_\Omega)^{-1}(z-i e_\Omega)$ and $\Cc_F^{-1}z=i (z+e_\Omega)(e_\Omega-z)^{-1}$. Since $B$ belongs to the stabilizer $K$ of $e_\Omega$ in $G(\Omega)$, it induces an automorphism of $F$ (as a Jordan algebra, cf.~\cite[p.~56--57]{FarautKoranyi}). Therefore, $B$ commutes with both $\Cc_F$ and $\Cc^{-1}_F$, so that $B'=B_\C$. 
	Thus,  Proposition~\ref{prop:13} shows that $\Delta^{\vect s}\circ B=\sum_j a_j (\Delta^{\vect s}\circ B_j)$, for some  $a_1,\dots, a_N\in\C$ and some $B_1,\dots, B_N\in K_0$.
	Now, Proposition~\ref{prop:15} shows that there are $A_1,\dots, A_N\in GL(E)$ such that $A_1\times (B_1)_\C,\dots, A_N\times (B_N)_\C\in  K_\Aff\cap \Aff_0(D)$. By holomorphy, it then follows that $\Delta^{\vect s}\circ (\Cc(A\times B_\C )\Cc^{-1})=\sum_j a_j  [\Delta^{\vect s}\circ (\Cc(A_j\times (B_j)_\C )\Cc^{-1})]\in \Qc_{\vect s}$, whence the result.
\end{proof}

In particular, if $\chi_{\vect s}$ denotes the character of the irreducible representation of $\Kc_0$ in $\Qc_{\vect s}$ (by composition), then the operators $Q_{\vect s}$ on $\Hol(\Dc)$, defined by
\[
Q_{\vect s}f\coloneqq \int_{\Kc_0} f(k^{-1}\,\cdot\,) \overline{\chi_{\vect s}(k)}\,\dd k,
\]
are  self-adjoint projectors of $\Hol(\Dc)$ onto $\Qc_{\vect s}$ such that $Q_{\vect s} Q_{\vect s'}=0$ if $\vect s\neq \vect s'$ and $I=\sum_{\vect s\in \N_\Omega} Q_{\vect s}$ pointwise on $\Lin(\Hol(\Dc))$.\footnote{To see this latter fact, take $f\in \Hol(\Dc)$, and observe that $Q_{\vect s}[f(R\,\cdot\,)]=(Q_{\vect s} f)(R\,\cdot\,)$ for every $R\in (0,1)$, so that we may reduce to the case in which $f$ is holomorphic on $R D$ for some $R>1$. In this case, $f\in H^2(\Dc)$ and the sum $\sum_{\vect s} Q_{\vect s}f$ converges in $H^2(\Dc)$, hence in $\Hol(\Dc)$, since the $\Qc_{\vect s}$ are pairwise orthogonal in $H^2(\Dc)=\Ac_{(m+n)/r}(\Dc)$ and $Q_{\vect s}$ induces the self-adjoint projector of $H^2(\Dc)$ onto $Q_{\vect s}$, as the discussion below shows.}

In addition, if $\gf$ denotes the Lie algebra of $G(\Dc)$ (identified with the Lie algebra of $\widetilde G(D)$), the derived representation $\dd\widetilde U_\lambda$ of $\widetilde U_\lambda$ preserves $\Qc$ and thus endows $\Qc$ with the structure of a $(\gf,\widetilde \Kc)$-module.\footnote{See, e.g.,~\cite{Adamsetal,Wallach,Wallach2} for more on the theory of $(\gf,\widetilde \Kc)$-modules. Notice, though, that the group $\widetilde G(\Dc)$ is \emph{not} reductive (and that $\widetilde \Kc$ is not compact) in this case, so that the theory developed in the cited references may not be applied directly in this context. The original theory developed by Harish-Chandra does, though.}
In particular, by means of the projectors $Q_{\vect s}$ described above, we see that the mappings $V\mapsto V\cap \Qc$ and $V\mapsto \overline V$ induce two inverse bijections between the set of closed $\widetilde U_\lambda$-invariant subspaces of $\Hol(\Dc)$ and the set of $(\gf,\widetilde \Kc)$-submodules of $\Qc$ (that is, $\gf$-$\dd U_\lambda$-invariant and $\widetilde \Kc$-invariant subspaces of $\Qc$). As a consequence of~\cite[Theorem 5.3]{FarautKoranyi2} and Proposition~\ref{prop:18} below, we then know that the only $(\gf,\widetilde \Kc)$-submodules of $\Qc$ (induced by $\widetilde U_\lambda$) are the $\bigoplus_{q(\vect s,\lambda)\meg j} \Qc_{\vect s}$, where $j=-1,\dots, q(\lambda)\coloneqq\max_{\vect s} q(\vect s,\lambda) $ and $q(\vect s,\lambda)$ is the multiplicity of $\lambda$ as a zero of the function 
\[
\lambda'\mapsto \prod_{k=1}^r  \left(\lambda'-\frac 1 2 m_k\right)\cdots  \left(\lambda'-\frac 1 2 m_k+s_k-1\right)
\]
In particular, with the notation of Proposition~\ref{prop:12}, $q(\lambda)=\card(K_\lambda)$ for every $\lambda\in \R$.

Now, set 
\[
(\vect s)^{\vect s'}\coloneqq \prod_{j=1}^r \prod_{k=0}^{s'_j-1} (s_j+k) \qquad \text{and}\qquad (\vect s)'^{\vect s'}\coloneqq \prod_{j=1}^r \prod_{\substack{k=0,\dots,s'_j-1\\s_j+k\neq 0}} \abs{s_j+k}
\]
for every $\vect s\in \R^r$ and for every $\vect s'\in \N^r$. Then,~\cite[Theorem 3.8]{FarautKoranyi2} shows that, for every $\lambda>m/r-1$,
\[
\Ac_\lambda(\Dc)=\Cc_\lambda(\Ac_\lambda(D))=\Set{f\in \Hol(\Dc)\colon \sum_{\vect s\in \N_\Omega} \frac{1}{\left(\lambda\vect1_r-\frac 1 2 \vect m \right)^{\vect s}} \norm{Q_{\vect s}(f)}_{\Fc}^2<\infty},
\]
with
\[
\norm{f}_{\Ac_\lambda(\Dc)}^2=c_\lambda \sum_{\vect s\in \N_\Omega} \frac{1}{\left(\lambda \vect1_r-\frac 1 2 \vect m \right)^{\vect s}} \norm{Q_{\vect s}(f)}_{\Fc}^2
\]
for every $f\in \Ac_\lambda(\Dc)$, where $\norm{f}_{\Fc}^2=\int_{E\times F_\C} \abs{f(z)}^2 \ee^{-\abs{z}^2}\,\dd z$ for every holomorphic polynomial $f$ on $E\times F_\C$ (cf.~also~\cite[Proposition XI.1.1]{FarautKoranyi}). Then, take $\lambda\in m/r-1-\N$, and define
\[
H_\lambda(\Dc)=\Set{f\in \Hol(\Dc)\colon \sum_{q(\vect s,\lambda)=q(\lambda)} \frac{1}{\left(\lambda \vect1_r-\frac 1 2 \vect m \right)'^{\vect s}} \norm{Q_{\vect s}(f)}_{\Fc}^2},
\]
endowed with the corresponding scalar product. Observe that the closure $V_{\lambda,q(\lambda)}$ of $\bigoplus_{q(\vect s,\lambda)<q(\lambda)} \Qc_{\vect s}$ in $\Hol(\Dc)$ is the closure of $\Set{0}$ in $H_\lambda(\Dc)$, and that $H_\lambda(\Dc)$ embeds continuously into $\Hol(\Dc)/V_{\lambda,q(\lambda)} $. For this latter fact, it will suffice to observe that there is  $C>0$ such that $\left(\lambda\vect 1_r -\frac 1 2 \vect m\right)'^{\vect s} \meg C \left(p\vect 1_r -\frac 1 2 \vect m\right)^{\vect s}$ for every $\vect s\in \N_\Omega$ such that $q(\vect s,\lambda)=q(\lambda)$, so that $H_\lambda(\Dc)$ embeds continuously into $\Ac_p(\Dc)/(\Ac_p(\Dc)\cap V_{\lambda,q(\lambda)})$, which in turn embeds continuously into $\Hol(\Dc)/V_{\lambda,q(\lambda)}$. Thus, $H_\lambda(\Dc)$ is strongly decent and saturated. Since, in addition, the seminorm of $H_\lambda(\Dc)$ is lower semi-continuous for the topology of $\Hol(\Dc)$, we see that $H_\lambda(\Dc)$ is complete, hence a semi-Hilbert space. 

Now,~\cite[Theorem 5.3]{FarautKoranyi2} shows that the scalar product of $H_\lambda(\Dc)$ is $\gf$-$\dd\widetilde U_\lambda$-invariant and $\widetilde \Kc$-$\widetilde U_\lambda$-invariant. Let us now prove that $H_\lambda(\Dc)$ is $\widetilde U_\lambda$-invariant with its seminorm. To this aim, let $\pi\colon \widetilde G(\Dc)\to G_0(\Dc)$ be the canonical projection, so that $\ker \pi$ is a dicrete central subgroup of $\widetilde G(\Dc)$. Observe that there is a unitary character $\chi_\lambda$ of $\ker \pi$ such that $\widetilde U_\lambda(\phi\psi)=\chi_\lambda(\phi)\widetilde U_\lambda(\psi)$ for every $\phi,\psi\in \ker \pi$. More precisely, observe that $\lambda/p$ is a rational number, so that there is $N\in\N^*$ such that $N\lambda/p\in\Z$. Then, $\chi_\lambda^N=1$, so that $ \chi_\lambda^{-1}(1)$ is a subgroup of index at most $N$ of $\ker \pi$. Thus, $\widetilde G(\Dc)/\chi_\lambda^{-1}(1)$ is a finite covering of $G_0(\Dc)$, and $\widetilde U_\lambda$ induces a representation of $\widetilde G(\Dc)/\chi_\lambda^{-1}(1)$ in $\Hol(\Dc)$. In particular, $\widetilde G(\Dc)/\chi_\lambda^{-1}(1)$ is a \emph{real reductive group}, so that~\cite[Corollary 4.24]{Wallach} shows that  $H_\lambda(\Dc)$ is $\widetilde U_\lambda$-invariant with its seminorm.

\subsection{The General Case}

In order to deal with   the case $n>0$, we shall heavily rely on the corresponding results for bounded domains. 

We shall begin with a rather implicit, yet useful, description of the closed $G_0$-$U_\lambda$-invariant subspaces of $\Hol(D)$.

\begin{prop}\label{prop:18}
	Take $\lambda\in \R$ and a closed subspace $V $ of $\Hol(D)$. With the notation of Proposition~\ref{prop:12}, for every $k\in K_\lambda$ define 
	\[
	V_{\lambda,k}\coloneqq \Set{f\in \Hol(D)\colon \forall \phi\in G_0 \:\: [U_\lambda(\phi) f]*I^{-\vect s}=0}
	\]
	where $\vect s$ is any element of $N_{\lambda,k}$.
	Then, $V_{\lambda,k}$ does \emph{not} depend on the choice of $\vect s$, and $V$ is  $G_0$-$U_\lambda$-invariant if and only if it is either $\Set{0}$, $\Hol(D)$, or $V_{\lambda,k}$ for some $k\in K_\lambda$. The space $V$ is then $G$-$U_\lambda$-invariant.
	
	In addition, if $k, k'\in K_\lambda$ and $k\neq k'$, then $V_{\lambda,k}\neq V_{\lambda,k'}$, and $V_{\lambda,k}$ is generated by   $\C\chi_E\otimes   \bigcap_{\vect s\in N_{\lambda,k}}\ker \Dc_{\vect s}$.
\end{prop}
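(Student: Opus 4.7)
The plan is to reduce the proposition to the analogous classification on the circular bounded realization $\Dc$, where $(\gf,\widetilde\Kc)$-module theory yields a complete list of invariant subspaces, and then to identify each $V_{\lambda,k}$ with one such subspace. Via the Cayley transform $\Cc_\lambda$, which intertwines $U_\lambda$ on $D$ with $U_\lambda$ on $\Dc$, closed $G_0$-$U_\lambda$-invariant subspaces of $\Hol(D)$ correspond bijectively to closed $G_0(\Dc)$-$U_\lambda$-invariant subspaces of $\Hol(\Dc)$. By the theory developed in Subsection~\ref{sec:bounded}, the latter are in bijection with the $(\gf,\widetilde\Kc)$-submodules of $\Qc$, and by~\cite[Theorem 5.3]{FarautKoranyi2} the non-trivial proper such submodules are exactly $\bigoplus_{q(\vect s,\lambda)\leq j}\Qc_{\vect s}$ for $j=0,\dots,q(\lambda)-1$, where $q(\lambda)=\card(K_\lambda)$. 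Since each $\Qc_{\vect s}$ is $\Kc$-invariant (Proposition~\ref{prop:30}) and $G(\Dc)=G_0(\Dc)\Kc$, these subspaces are in fact $G(\Dc)$-invariant, so that $\Hol(D)$ contains exactly $q(\lambda)$ proper non-trivial closed $G_0$-$U_\lambda$-invariant subspaces, all of which are automatically $G$-invariant.

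For each $k\in K_\lambda$ and $\vect s\in N_{\lambda,k}$, the subspace $V_{\lambda,k}(\vect s):=\{f\in\Hol(D):\forall\phi\in G_0,\,(U_\lambda(\phi)f)*I^{-\vect s}=0\}$ is tautologically the largest closed $G_0$-$U_\lambda$-invariant subspace of $\ker(\,\cdot\,*I^{-\vect s})$, and hence coincides with one of $\{0\}$, $\Hol(D)$, or the $q(\lambda)$ subspaces above. It is proper since $\Hol(D)\not\subseteq\ker(\,\cdot\,*I^{-\vect s})$. To pin down the corresponding index $j$, I would use Lemma~\ref{lem:8} to evaluate convolution by $I^{-\vect s}$ on the Cayley pull-backs of elements of $\Qc_{\vect t}$ and compare the resulting Pochhammer-symbol vanishings with the multiplicity function $q(\vect t,\lambda)$; a careful analysis should show that $j$ depends only on $k$ and is strictly monotone in $k\in K_\lambda$. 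This yields simultaneously the independence of $V_{\lambda,k}$ from the choice of $\vect s$, the distinctness of $V_{\lambda,k}$ across $k$, non-triviality, and the fact that the $V_{\lambda,k}$ exhaust all non-trivial proper closed $G_0$-invariant subspaces of $\Hol(D)$.

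For the generating statement, Proposition~\ref{prop:12} applied to $T_\Omega$ gives that $V_{\lambda,k}^{\mathrm{tube}}:=\bigcap_{\vect s\in N_{\lambda,k}}\ker\Dc_{\vect s}\subseteq\Hol(T_\Omega)$ is non-trivial and $G_0(T_\Omega)$-$U_\lambda$-invariant. Since the projection $\pi\colon D\ni(\zeta,z)\mapsto z\in T_\Omega$ is well defined (as $\Im z=(\Im z-\Phi(\zeta))+\Phi(\zeta)\in\Omega+\overline\Omega\subseteq\Omega$), pull-back along $\pi$ embeds $\C\chi_E\otimes V_{\lambda,k}^{\mathrm{tube}}$ into $\Hol(D)$. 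I would then verify, on the generators of $G_0$ supplied by Proposition~\ref{prop:15}, that $U_\lambda(\phi)(\chi_E\otimes g)\in\ker(\,\cdot\,*I^{-\vect s})$ for every $g\in V_{\lambda,k}^{\mathrm{tube}}$, using the tube-domain invariance of $V_{\lambda,k}^{\mathrm{tube}}$ for the linear generators and the inversion, and translation-invariance of convolution for the $\Nc$-action. The closed $G_0$-invariant subspace $W$ so generated is then non-zero and contained in $V_{\lambda,k}$, hence by the uniqueness from the classification it equals $V_{\lambda,k}$. The main obstacle is the matching step in the second paragraph: convolution operators on $\Hol(D)$ do not act diagonally under the polynomial decomposition on $\Dc$, so identifying $j$ in terms of $k$ requires extending the Pochhammer-symbol computations of Proposition~\ref{prop:12} to the non-tube setting via Lemma~\ref{lem:8} and careful bookkeeping of the Cayley transform.
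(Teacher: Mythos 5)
Your overall strategy (pass to the bounded realization via $\Cc_\lambda$ and use the $\Kc_0$-finite vectors) is in the right spirit, but there are two genuine gaps. First, you take as input that the proper non-trivial $(\gf,\widetilde\Kc)$-submodules of $\Qc$ are \emph{exactly} the $\bigoplus_{q(\vect s,\lambda)\meg j}\Qc_{\vect s}$, attributing this to~\cite[Theorem 5.3]{FarautKoranyi2}. In the paper that complete classification is stated as a \emph{consequence} of Proposition~\ref{prop:18} together with~\cite[Theorem 5.3]{FarautKoranyi2}; the latter is only invoked for the lower bound (existence of at least $\card(K_\lambda)$ proper non-trivial invariant subspaces), while the upper bound (exhaustiveness) is exactly what has to be proved. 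Using the full classification on $\Dc$ as an input therefore begs the question. Second, you explicitly defer the matching step --- which index $j$ corresponds to which $k$, hence the independence of $V_{\lambda,k}$ from $\vect s\in N_{\lambda,k}$, the distinctness over $k$, and non-triviality --- to ``a careful analysis'' of how convolution by $I^{-\vect s}$ interacts with the $\Qc_{\vect t}$-decomposition after the Cayley transform. As you yourself note, these operators do not act diagonally in that decomposition, and this is precisely the hard part; it is not carried out. Relatedly, in your last paragraph the conclusion $W=V_{\lambda,k}$ does not follow merely from $0\neq W\subseteq V_{\lambda,k}$ and $W$ being invariant, since the $V_{\lambda,k'}$ are nested ($V_{\lambda,k'}\subseteq V_{\lambda,k}$ for $k'\meg k$, cf.\ \textsc{Step III} of the proof of Proposition~\ref{prop:12}), so $W$ could a priori be a smaller member of the family.

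The paper's proof avoids both issues by a restriction argument that you do not use: given a closed $G_0$-$U_\lambda$-invariant $V$, one passes to $\Vc=\Cc_\lambda(V)$, takes the $\Kc_0$-finite vectors $\Vc_\Kc=\Vc\cap\Qc$, and observes via~\cite[Theorem 2.1]{FarautKoranyi2} that $\Vc$ is generated by the restrictions of $\Vc_\Kc$ to the tube slice $\Dc_0$. Transferring that restriction back to $T_\Omega$ yields a closed $\widetilde U_\lambda^0$-invariant subspace $V_0$ of $\Hol(T_\Omega)$ (invariance is checked on the generators of $G_0(T_\Omega)$ supplied by Proposition~\ref{prop:15}), to which the already-proven tube-domain classification (Proposition~\ref{prop:12}) applies, giving $V_0=\bigcap_{\vect s\in N_{\lambda,k}}\ker\Dc_{\vect s}$. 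One then reconstructs $V$ as the invariant subspace generated by $\C\chi_E\otimes V_0$, shows $V\subseteq V_{\lambda,k}$ by applying the restriction statement to all $\Nc$-translates, and gets equality because both spaces are invariant and have the same restriction to $T_\Omega$. This simultaneously yields the independence from $\vect s$, the generating statement, and (combined with the lower bound from~\cite[Theorem 5.3]{FarautKoranyi2}) the exhaustiveness and distinctness, with no need to track convolution operators through the polynomial decomposition on $\Dc$. If you want to salvage your approach, you should replace the appeal to a complete submodule classification on $\Dc$ by this restriction-to-$T_\Omega$ reduction, which is where Proposition~\ref{prop:12} actually enters.
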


In particular, the invariant spaces considered in the above proposition corresponding to different $k$ are all different, and different from $\Set{0}$ and $\Hol(D)$.

In the bounded realization, the $V_{\lambda,k}$, $k\in K_\lambda$, are simply the closures in $\Hol(\Dc)$ of the $\bigoplus_{q(\vect s,\lambda)\meg j} \Qc_{\vect s}$, $j=0,\dots, q(\lambda)-1$ (cf.~Subsection~\ref{sec:bounded}).

\begin{proof}
	We keep the notation of Subsection~\ref{sec:bounded}. Then, $\Vc\coloneqq \Cc_\lambda(V)$ is a $G_0(\Dc)$-$U_\lambda$-invariant closed subspace of $\Hol(\Dc)$. Let $\Vc_\Kc\coloneqq \Vc\cap \Qc$ be the space of finite $\Kc_0$-vectors in $\Vc$, so that $\Vc=\overline{\Vc_\Kc}$.
	Denote by $\Vc_{\Kc,0}$ the space of restrictions to $\Dc_0$ of the elements of  $\Vc_\Kc$, and by $\Vc_0$ its closure in $\Hol(T_\Omega)$. By~\cite[Theorem 2.1]{FarautKoranyi2}, 	$\Vc_\Kc$ is the  $\Kc_0$-$U_\lambda$-invariant subspace of  $\Hol(\Dc)$ generated by the $\Delta^{\vect s}$, $\vect s\in \N_\Omega $, that it contains. Therefore,	
	$\Vc$ is the closed $\widetilde G(\Dc)$-$\widetilde U_\lambda$-invariant (or simply $\Kc_0$-$U_\lambda$-invariant) subspace of  $\Hol(\Dc)$ generated by $\Set{(\zeta,z)\mapsto f(z)\colon f\in \Vc_{\Kc,0}}$, hence also by $\Set{(\zeta,z)\mapsto f(z)\colon f\in \Vc_{0}}$. 
	Define $V_0\coloneqq \Cc_{F,\lambda}^{-1}\Vc_0$, where $\Cc_{F,\lambda}$ is defined from $\Cc_F$ as $\Cc_\lambda$ is defined from $\Cc$, and set
	\[
	\widetilde U_\lambda^0(\phi)\colon f \mapsto (f\circ \phi^{-1}) (J \phi^{-1})^{\lambda/(2 m/r)}
	\]
	for every $\phi \in \widetilde G(T_\Omega)$. Let us prove that $V_0 $ is $\widetilde G(T_\Omega)$-$\widetilde U_\lambda^0$-invariant. 
	Observe first that, since by Proposition~\ref{prop:15} for every $\phi \in \Aff_0(T_\Omega)$ there is $\psi\in GL(E)$ such that $\psi\times \phi\in \Aff_0(D)$, it is clear that $V_0$ is $\Aff_0(T_\Omega)$-$U_\lambda^0$-invariant. 
	Then, take $\iota$ as in Proposition~\ref{prop:15}, so that $(J\iota)(\zeta,z)=i^{-n}\Delta^{-p\vect 1_r}(z)$ and $(J\iota_0)(z)=\Delta^{-(2m/r)\vect 1_r}(z)$ for every $(\zeta,z)\in D$, where $\iota_0$ is the biholomorphism of $T_\Omega$ induced by $\iota$, thanks to Proposition~\ref{prop:15}. 
	Then, we may identify $\iota$ and $\iota_0$ with suitable elements of $\widetilde G$ and $\widetilde G(T_\Omega)$ in such a way that $\ee^{\lambda n \pi i/(2 p) }(J\iota)^{-\lambda/p}(\zeta,z)=(J\iota_0)^{-\lambda/(2 m/r)}(z)$ for every $(\zeta,z)\in D$, so that $V_0$ is $\widetilde U^0_\lambda(\iota_0)$-invariant. 
	Since $G_0(T_\Omega)$ is generated by $\Aff_0(T_\Omega)$ and $\iota_0$ by Proposition~\ref{prop:15}, this implies that $V_0$ is $\widetilde U^0_\lambda$-invariant. 
	Observe that $V_0\neq \Set{0}, \Hol(T_\Omega)$ since $\Vc_0$ is the closure of $\Vc_{\Kc,0}$ and $\Vc_{\Kc,0}$ is different from $\Set{0}$ and is not dense in the space of holomorphic polynomials on $T_\Omega$ by the preceding analysis. 
	Since $V_0$ is $\widetilde U^0_\lambda$-invariant, and is different from $\Set{0}$ and $\Hol(D)$, Proposition~\ref{prop:12} implies that $V_0=\bigcap_{\vect s\in N_{\lambda,k}}\ker \Dc_{\vect s}$ for some $k\in K_\lambda$. 
	
	It then follows that $V$ is the closed $\widetilde G$-$\widetilde U^0_\lambda$-invariant  subspace of $\Hol(D)$ generated by
	\[
	\C \chi_E\otimes  \bigcap_{\vect s\in N_{\lambda,k}}\ker \Dc_{\vect s}.
	\]
	In addition, for every $f\in V$, the restriction of $f$ to $T_\Omega$ belongs to $V_0$. Applying this fact to the translates of $f$ along $\Nc$, we then see that $f* I^{-\vect s}=0$ for every $\vect s\in N_{\lambda,k}$, so that $V \subseteq V_{\lambda,k}$ by the arbitrariness of $f$ and the $\widetilde U_\lambda$-invariance of $V$, independently of   the $\vect s$ chosen to define $V_{\lambda,k}$. Equality actually holds since both $V$ and $V_{\lambda,k}$ are $\widetilde U_\lambda$-invariant and induce $\bigcap_{\vect s\in N_{k,\lambda}}\ker \Dc_{\vect s}$ by restriction to $T_\Omega$ by Propositions~\ref{prop:15} and~\ref{prop:12}. In particular, by the same argument we see that the definition of $V_{\lambda,k}$ does not depend on the choice of $\vect s\in N_{\lambda,k}$.
	The fact that $V$ is actually $G$-$U_\lambda$-invariant follows 	 from Proposition~\ref{prop:30}.	
	
	In order to complete the proof, it will suffice to prove that there are at least $\card{K_\lambda}$ closed $G_0$-$U_\lambda$-invariant subspaces of $\Hol(D)$ which  are different from $\Set{0}$ and $\Hol(D)$. This follows from~\cite[Theorem 5.3]{FarautKoranyi2}.	
\end{proof}

Recall that $\Ac_{\lambda}$ is $G$-$U_\lambda$-invariant with its norm for every $\lambda\in \Wc(\Omega)$ by Proposition~\ref{prop:19}.

\begin{teo}\label{teo:5}
	Take $\lambda\in \R$.  If $\lambda\in   m /r -1 -\N$, then there is a   strongly decent and saturated semi-Hilbert space $H_\lambda$ of holomorphic functions on $D$ such that the following hold:
	\begin{itemize}
		\item $H_\lambda$ is $G$-$U_\lambda$-invariant with its seminorm;
		
		\item $H_\lambda$ embeds continuously into  $\Ac_{\lambda, m/r-\lambda}$;
		
		\item the canonical mapping $H_\lambda/(H_\lambda\cap \ker \square^{m/r-\lambda})\to \widehat \Ac_{\lambda,m/r-\lambda}$ is  a multiple of an  isometry;
		
		\item $\pr_0 H_\lambda=\C \chi_E\otimes_2 \Ac_{\lambda,m/r-\lambda}(T_\Omega)$ with a  proportional seminorm, where $\pr_0(f)\colon(\zeta,z)\mapsto f(0,z)$.\footnote{Given two Hilbert spaces $X,Y $, we denote by $X\otimes_2 Y$ the tensor product of $X$ and $Y$ endowed with the scalar product defined by $\langle x\otimes y\vert x'\otimes y'\rangle\coloneqq \langle x\vert x'\rangle_X\langle y\vert y'\rangle_Y$ for every $x,x'\in X$ and for every $y,y'\in Y$.}
	\end{itemize}
	
	Conversely, assume that $H$ is a non-trivial strongly decent and  saturated semi-Hilbert space of holomorphic functions on $D$ in which $U_\lambda$ induces a bounded (resp.\ isometric) ray representation of $G_0$.  Then, either one of the following conditions holds:
	\begin{itemize}
		\item[\textnormal{(1)}] $\lambda\in \Wc(\Omega)$ and $H=\Ac_{\lambda}$ with an equivalent (resp.\ proportional) norm;
		
		\item[\textnormal{(2)}]  $\lambda\in m/ r -1 -\N$ and $H=H_\lambda$ with equivalent (resp.\ proportional) seminorms;
	\end{itemize}
\end{teo}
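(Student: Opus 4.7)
The strategy has two parts: constructing $H_\lambda$ via the Cayley transform, and classifying $H$ by combining Proposition~\ref{prop:18} with a Schur-type analysis in the bounded realization.

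For the existence of $H_\lambda$, we simply set $H_\lambda\coloneqq \Cc_\lambda^{-1}(H_\lambda(\Dc))$, where $H_\lambda(\Dc)$ is the space constructed in Subsection~\ref{sec:bounded}. Strong decency, saturation, and $\widetilde U_\lambda$-invariance transfer from $H_\lambda(\Dc)$ through $\Cc_\lambda$; full $G$-$U_\lambda$-invariance of the seminorm follows by combining $G_0$-invariance with $\Aff$-invariance, the latter verified directly as in Proposition~\ref{prop:19}. To establish the continuous embedding into $\Ac_{\lambda,m/r-\lambda}$ and the isometry statement on the Hausdorff quotient, one compares norms on each $\Kc_0$-isotypic component: on $\Qc_{\vect s}$ with $q(\vect s,\lambda)=q(\lambda)$, the norm of $H_\lambda(\Dc)$ is the weighted Fischer norm with weight $1/(\lambda\vect 1_r-\frac 1 2\vect m)'^{\vect s}$, while the corresponding weight coming from $\square^{m/r-\lambda}$ followed by the norm of $\Ac_{2m/r-\lambda}(\Dc)$ is computed via Lemma~\ref{lem:8}; the resulting ratio simplifies to a constant independent of $\vect s$ on these components. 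For the $\pr_0$-statement, the restriction to $\Set{\zeta=0}$ takes $\Qc_{\vect s}$ onto the analogous space $\Pc_{\vect s}\subseteq\Hol(\Dc_0)$ for the tube domain and transforms the isotypic weights precisely into those defining the tube-domain analogue of $H_\lambda$, which by Theorem~\ref{teo:2} is $\Ac_{\lambda,m/r-\lambda}(T_\Omega)$.

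For the classification, let $V$ be the closure of $\Set{0}$ in $H$. By saturation and Proposition~\ref{prop:6}, $V$ is a closed subspace of $\Hol(D)$, and it is manifestly $G_0$-$U_\lambda$-invariant; Proposition~\ref{prop:18} then identifies $V$ as one of $\Set{0}$, $\Hol(D)$, or $V_{\lambda,k}$ for some $k\in K_\lambda$. Non-triviality excludes $\Hol(D)$; the case $V=\Set{0}$ leaves $H$ a Hilbert space continuously embedded in $\Hol(D)$, whence Proposition~\ref{prop:20} yields $\lambda\in\Wc(\Omega)$ and $H=\Ac_\lambda$. The remaining case $V=V_{\lambda,k}$ is the heart of the argument.

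In that case, we transport $H$ to $\Dc$ via $\Cc_\lambda$ and, by amenability of $\Kc_0$ together with the bounded-representation assumption, replace the scalar product of $H$ by an equivalent (resp.\ proportional, when $U_\lambda$ is isometric) $\Kc_0$-invariant one. Since the $\Qc_{\vect s}$ are pairwise inequivalent irreducibles (Proposition~\ref{prop:30}), Schur's lemma forces the induced scalar product on the finite $\Kc_0$-vectors in $\Cc_\lambda(H)/\Cc_\lambda(V)$ to be $\sum_{\vect s} a_{\vect s}\norm{Q_{\vect s}(\,\cdot\,)}_\Fc^2$, with $a_{\vect s}>0$ precisely on those $\vect s$ with $\Qc_{\vect s}\not\subseteq V$. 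The action of the nilpotent (raising) part of $\gf$ in $\dd\widetilde U_\lambda$ connects $\Qc_{\vect s}$ to $\Qc_{\vect s+\vect s'}$ for certain $\vect s'$, imposing a multiplicative recursion on the $a_{\vect s}$; following the computation of \cite[Theorem 5.3]{FarautKoranyi2} in our setting, this recursion admits a positive solution compatible with $V=V_{\lambda,k}$ only when $k=r$ (which forces $\lambda\in m/r-1-\N$), in which case the $a_{\vect s}$ are uniquely determined up to a global scalar and coincide with those of $H_\lambda$. The main obstacle will be this positivity analysis, in particular excluding the intermediate $V_{\lambda,k}$ with $k<r$: one must trace the Pochhammer factors along each chain $\vect s\mapsto\vect s+\vect s'$ and verify that positivity of the $a_{\vect s}$ fails for every candidate quotient structure other than the one realizing $H_\lambda$.
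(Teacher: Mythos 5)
Your overall architecture (Cayley transform, Proposition~\ref{prop:18} for the null space, Schur's lemma on the $\Kc_0$-isotypic components $\Qc_{\vect s}$) overlaps with the paper's, but the engine of your classification is genuinely different and, as sketched, has gaps. The paper does \emph{not} run a coefficient recursion on the $a_{\vect s}$: it averages over the torus $\T\subseteq K_0$ acting on $E$ to obtain the self-adjoint projector $\pr_0$ onto $\C\chi_E\otimes\Hol(T_\Omega)$, shows that $\pr_0(H)$ is a strongly decent saturated semi-Hilbert space on the tube domain $T_\Omega$ with bounded $G_0(T_\Omega)$-action (via Proposition~\ref{prop:15}), invokes the already-proved tube theorem (Theorem~\ref{teo:2}) to get $\lambda\in m/r-1-\N$, $\Vc=\ker\square^{m/r-\lambda}$ and the norm on $\pr_0(H)$, and then propagates the norm identification to all of $H$ using only that each $\Qc'_{\vect s}$ is $K_0$-irreducible and $\pr_0(\Qc'_{\vect s})\neq\Set{0}$. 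This is what lets the argument treat the merely bounded case, and it is also how the intermediate spaces $V_{\lambda,k}$, $k<r$, get excluded (the exclusion happens inside the tube-domain proof, by a combinatorial argument on the $\epsb$'s, not by a signature computation).

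The concrete gaps in your route are the following. First, the multiplicative recursion on the $a_{\vect s}$ coming from the raising part of $\gf$ presupposes that the scalar product is $\gf$-$\dd\widetilde U_\lambda$-invariant; averaging over the compact group $\Kc_0$ only yields $\Kc_0$-invariance, and $G_0$ is not amenable, so in the bounded (non-isometric) case you have no such recursion and your argument does not constrain the $a_{\vect s}$ at all. Second, your componentwise weight comparison for the third and fourth bullets is not available as stated: $\square^{m/r-\lambda}$ is $K_\Aff$-equivariant but \emph{not} $\Kc_0$-equivariant, so it does not preserve the isotypic decomposition $\bigoplus_{\vect s}\Qc'_{\vect s}$, and there is no single ``weight ratio'' per component to compute via Lemma~\ref{lem:8}; the paper obtains the isometry statement instead from Proposition~\ref{cor:1}, i.e.\ from the $G_T$-irreducibility of $\Ac_{\vect s}$ (Proposition~\ref{prop:4b}). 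Third, the positivity analysis you defer to ``following \cite[Theorem 5.3]{FarautKoranyi2}'' is precisely the delicate point: the paper explicitly records that the neighbouring \cite[Theorem 5.4]{FarautKoranyi2} is incorrect for $n>0$ (and that the proof of \cite[Theorem 5.2]{Arazy} is incomplete without saturation), so the exclusion of the intermediate quotients cannot simply be cited; it must be re-derived, which is exactly what the reduction to $T_\Omega$ accomplishes.
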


Cf.~\cite{Rango1} for a  description of $H_\lambda$ when $r=1$, and also~\cite{Arcozzietal} for another description of $H_0$ when $r=1$.

Notice that the above result improves~\cite[Theorems 5.2 and 5.3]{Arazy} (for $(r,\lambda)\neq (1,0)$), since it also deals with the case in which the $U_\lambda(\phi)$ are uniformly bounded but not necessarily isometric.

We observe explicitly that proving that $H_\lambda$ has the seminorm induced by $\Ac_{\lambda, m/r-\lambda}$ (up to a constant) is equivalent to proving that it is $\Aff_0$-$\Uc_{\lambda\vect 1_r}$-irreducible (or, equivalently, $\Aff_0$-$U_\lambda$-irreducible). Indeed, one implication follows from Proposition~\ref{prop:4b} and   Lemma~\ref{lem:7}. Conversely, assume that $H_\lambda$ is $\Aff_0$-$\Uc_{\lambda\vect 1_r}$-irreducible. Then, using Schur's lemma (cf., e.g.,~\cite[Corollary 1 to Theorem 1]{Naimark}), the continuity of $\square^{m/r-\lambda}\colon H_\lambda\to \Ac_{2 m/r-\lambda}$,  and Lemma~\ref{lem:7}, we see that $\square^{m/r-\lambda}$ is isometric (up to a constant), so that $H_\lambda$ has the seminorm induced by $\Ac_{\lambda, m/r-\lambda}$ (up to a constant). 

We shall now briefly comment on~\cite[Theorem 5.4]{FarautKoranyi2}. Observe that~\cite[Theorem 5.4]{FarautKoranyi2} and the classical theory of Harish-Chandra modules (cf., e.g.,~\cite[Theorem 2.7]{Adamsetal} and the final discussion of Subsection~\ref{sec:bounded}) imply that $\widetilde U_\lambda$ and $\widetilde U_{2 m/r-\lambda}$ are unitarily equivalent as representations of $\widetilde G$ in $H_\lambda/V_\lambda$ and $\Ac_{2 m/r-\lambda}$, respectively, where $V_\lambda$ denotes the closure of $\Set{0}$ in $H_\lambda$. Notice that this fact follows from Proposition~\ref{prop:4} when $n=0$, that is, $D$ is a tube domain. 
This, in turn, implies that $H_\lambda$ is $G_T$-$\Uc_{\lambda\vect 1_r}$-irreducible, with the aforementioned consequences. 
Unfortunately,~\cite[Theorem 5.4]{FarautKoranyi2} is incorrect for $n>0$. In fact, $\widetilde U_\lambda$ (as a representation of $\widetilde G$ in $H_\lambda$) cannot be equivalent to $\widetilde U_\xi$, as a representation of $\widetilde G$ in $\Ac_\xi$, for any $\xi\in \Wc(\Omega)$. Indeed,~\cite[Theorem 2.1]{FarautKoranyi2} shows that $\Ac_\xi$ contains a $1$-dimensional $\widetilde K $-$\widetilde U_\lambda$-invariant subspace (namely, $\C B^{-\xi\vect 1_r}_{(0, i e_\Omega)}$, which corresponds to the space of constant functions on $\Dc$ with the notation of Subsection~\ref{sec:bounded}), whereas $H_\lambda/V_\lambda$ contains none, unless $n=0$.

\begin{proof}
	We keep the notation of Subsection~\ref{sec:bounded}.
	Take $H$ as in the statement. Observe that, by Proposition~\ref{prop:6}, the closure $V$ of $\Set{0}$ in $H$ is a closed $G_0$-$U_\lambda$-invariant subspace of $\Hol(D)$ and the canonical mapping $H\to \Hol(D)/V$ is continuous. If $V=\Set{0}$, then (1) holds by Proposition~\ref{prop:4b} (or Proposition~\ref{prop:20}). We may then assume that $V\neq \Set{0}$.
	
	Observe that we may assume that $\widetilde U_\lambda$ induces a unitary representation of the stabilizer $\widetilde K$ of $(0, i e_\Omega)$ in $\widetilde G(D)$  in $H$, up to replacing the scalar product of $H$ with the equivalent one
	\[
	(f,g)\mapsto \int_{K_0} \langle U_\lambda(k)f\vert U_\lambda(k) g\rangle_H\,\dd k,
	\]
	where $K_0$ denotes the (compact) stabilizer of $(0,i e_\Omega)$ in $G_0(D)$.\footnote{Notice that this latter scalar product is well defined. First, observe that $\langle U_\lambda(k)f\vert U_\lambda(k) g\rangle_H$ is independent of the chosen representative of $U_\lambda(k)$, provided that the same representative is chosen on both sides of the scalar product. Then, observe that this mapping (of $\phi$) is continuous on $G_0$, since it lifts to a continuous mapping on $\widetilde G(D)$ by~\cite[Proposition 2.14]{Rango1}. } In particular, if we identify  $\T$ with a subgroup of $GL(D)$ acting  on $E$ by multiplication, then $\T\subseteq K_0$ and $H$ and its seminorm are $\T$-$U_\lambda$-invariant (or, equivalently, $\T$-$\Uc_{\lambda \vect 1_r}$-invariant). In particular,
	\[
	\pr_0 f=\int_\T \Uc_{\lambda \vect 1_r}(\alpha)f\,\dd \alpha 
	\]
	for every $f\in \Hol(D)$, so that $\pr_0$ induces a self-adjoint projector of $H$ onto $H\cap (\C \chi_E\otimes \Hol(T_\Omega))$. 
	Now, define $\Hc$ and $\Vc$ as the sets of $f\in \Hol(T_\Omega)$ such that the mapping $(\zeta,z)\mapsto f(z)$ belongs to  $\pr_0(H)$ and  $\pr_0(\Hol(D))$, respectively, so that $\pr_0(H)=\C\chi_E\otimes \Hc$ and $\pr_0(V)=\C \chi_E\otimes \Vc$. If we endow $\Hc$ with the scalar product induced by the bijection $\pr_0(H)\ni f\mapsto f(0,\,\cdot\,)\in \Hc$, then $\Hc$ becomes a semi-Hilbert space such that $H=\C\chi_E\otimes_2 \Hc$, such that $\Vc$ is the closure of $\Set{0}$ in $\Hc$, and such that the mapping $\Hc\to \Hol(T_\Omega)/\Vc$ is continuous (cf.~the proof of~\cite[Proposition 5.1]{Rango1}). In particular, $\Hc$ is strongly decent and saturated.
	Define $U^0_\lambda\colon G(T_\Omega)\to \Lin(\Hol(T_\Omega))/\T$   so that $U^0_\lambda(\phi) f=(f\circ \phi^{-1}) (J\phi^{-1})^{\lambda/(2 m/r)}$ for every $\phi\in G(T_\Omega)$ and for every $f\in \Hol(T_\Omega)$.
	Using Proposition~\ref{prop:15}, one may then show that $U^0_\lambda$ induces a bounded (resp.\ isometric) representation of $G_0$ in $\Hc$.
	
	Observe that Proposition~\ref{prop:18} implies that $V$ is the closed $G_0$-$U_\lambda$-invariant subspace of $\Hol(D)$ generated by $\pr_0(V)$, so that $\Vc\neq \Set{0}$. 
	Then, Theorem~\ref{teo:4} implies that $\lambda\in m/r-1-\N$, that  $\Vc=\ker \square^{m/r-\lambda}$, and that $\Hc=\Ac_{\lambda,m/r-\lambda}(T_\Omega) $ with an equivalent (resp.\ proportional) seminorm. 
	In addition, Proposition~\ref{prop:18} implies that $V\subseteq \ker \square^{m/r-\lambda}$, so that Proposition~\ref{cor:1} implies that $H\subseteq \Ac_{\lambda,m/r-\lambda}$ continuously, and that the canonical mapping $H/(H\cap \ker \square^{m/r-\lambda})\to \widehat \Ac_{\lambda, m/r-\lambda}$ is an isomorphism (resp.\ a multiple of an isometry).

	Since $\widetilde U_\lambda$ induces a unitary representation of $\widetilde K$ in $H$, by the arguments of Subsection~\ref{sec:bounded}  we know that the projectors $Q_{\vect s}$ on $\Hol(\Dc)$, transferred to projectors $Q'_{\vect s}=\Cc_\lambda^{-1}Q_{\vect s}\Cc_\lambda$ on $\Hol(D)$, are self-adjoint on $H$, so that  the orthogonal direct sum of the $Q'_{\vect s}(H)$ is dense in $H$.\footnote{When $q(\vect s,\lambda)<q(\lambda)$, this follows from the fact that $Q'_{\vect s}(H)\subseteq V$ by the analysis of Subsection~\ref{sec:bounded}.} Since, in addition, $V$ is the largest proper $\widetilde U_\lambda$-invariant closed subspace of $\Hol(D)$ by Proposition~\ref{prop:18}, we see that $H$ is dense in $\Hol(D)$, so that $Q'_{\vect s}(H)=\Qc'_{\vect s}\coloneqq \Cc_\lambda^{-1}(\Qc_{\vect s})$ for every $\vect s\in \N_\Omega$.
	
	Now, set (cf.~Subsection~\ref{sec:bounded})
	\[
	H_\lambda(D)\coloneqq \Cc_\lambda^{-1}H_\lambda(\Dc)=\Set{f\in \Hol(D)\colon \sum_{q(\vect s,\lambda)=q(\lambda)} \frac{1}{\left(\lambda \vect 1_r-\frac 1 2 \vect m\right)'^{\vect s}} \norm{Q'_{\vect s}f}_{\Cc_\lambda^{-1}\Fc}^2<\infty},
	\]
	so that $H_\lambda(D)$ is   a non-trivial strongly decent and saturated semi-Hilbert space  of holomorphic functions on $D$ which is   $\widetilde U_\lambda$-invariant with its seminorm. Then, the preceding analysis shows $\pr_0 H_\lambda=\pr_0 H=\C\chi_E\otimes_2\Ac_{\lambda,m/r-\lambda}$ with equivalent (resp.\ proportional) seminorms, so that there are constants $C\Meg 1$ (resp.\ $C=1$) and $C'>0$ such that
	\begin{equation}\label{eq:1}
	\frac 1 C\norm{f}_{H}\meg C'\norm{f}_{H_\lambda(D)}\meg C \norm{f}_H
	\end{equation}
	for every $f\in \C\chi_E\otimes_2\Ac_{\lambda,m/r-\lambda}$. In particular, this shows that~\eqref{eq:1} holds	for every $f\in \pr_0(\Qc'_{\vect s})$ and for every $\vect s\in\N_\Omega$. Now, observe that each $\Qc'_{\vect s}$ is $K_0$-$U_\lambda$-irreducible, so that it admits only one $K_0$-$U_\lambda$-invariant norm, up to a multiplicative constant. Since $ \pr_0(\Qc'_{\vect s})\neq \Set{0}$ (for example, $\Cc_\lambda^{-1}(\Delta^{\vect s})\in \Qc'_{\vect s}$), and since both $H$ and $H_\lambda$ induce $K_0$-$U_\lambda$-invariant seminorms on $\Qc_{\vect s}'$, the above analysis shows that~\eqref{eq:1} holds	for every $f\in  \Qc'_{\vect s}$ and for every $\vect s\in\N_\Omega$. Since the $\Qc'_{\vect s}$ are pairwise orthogonal in both $H$ and $H_\lambda(D)$, and their sum is dense in both $H$ and $H_\lambda(D)$ by the preceding analysis, this proves that $H=H_\lambda(D)$ with equivalent (resp.\ proportional) seminorms.  
	
	It only remains to prove that $H_\lambda(D)$ is $G$-$U_\lambda$-invariant with its seminorm. Since, however, each $\Qc'_{\vect s}$ is $K$-$U_\lambda$-invariant with its norm by Proposition~\ref{prop:30}, and since $G(D)=G_0(D) K$, the assertion follows.
\end{proof}

\end{document}